\documentclass[11pt,reqno]{amsart}
\usepackage{indentfirst,enumerate,cite,amssymb,amsfonts,amsmath,amsthm,mathrsfs,dsfont}
\usepackage{color}
\usepackage{graphicx}
\usepackage{multicol}
\usepackage[colorlinks=true,linkcolor=blue,citecolor=blue]{hyperref}
\usepackage{enumerate}
\usepackage{geometry}
\numberwithin{equation}{section}

\newtheorem{theorem}{Theorem}[section]
\newtheorem{definition}[theorem]{Definition}

\newtheorem{lemma}[theorem]{Lemma}
\newtheorem{proposition}[theorem]{Proposition}
\newtheorem{corollary}[theorem]{Corollary}

\newtheorem{remark}{Remark}

\allowdisplaybreaks

\begin{document}

\title[Global Gevrey Hypoellipticity for Involutive Systems]{Global Gevrey Hypoellipticity of Involutive Systems on Non-Compact Manifolds}

	\author[S. Coriasco]{Sandro Coriasco}
	\address{
		Dipartimento di Matematica ``Giuseppe Peano'', 
		Università Degli Studi di Torino, 
		Via Carlo Alberto 10, CAP 0123, Torino, 
		Italia }
	\email{sandro.coriasco@unito.it}

	\author[A. Kirilov]{Alexandre Kirilov}
	\address{
		Departamento de Matem\'atica, 
		Universidade Federal do Paran\'a,  
		Caixa Postal 19096, CEP 81530-090, Curitiba, Paran\'a, 
		Brasil}
	\email{akirilov@ufpr.br}
	
	\author[W. de Moraes]{Wagner A. A. de Moraes}
	\address{
		Departamento de Matem\'atica,  
		Universidade Federal do Paran\'a,  
		Caixa Postal 19096\\  CEP 81530-090, Curitiba, Paran\'a, 
		Brasil}
	\email{wagnermoraes@ufpr.br}
	
	\author[P. Tokoro]{Pedro M. Tokoro}
	\address{
		Programa de P\'os-Gradua\c c\~ao em Matem\'atica,  
		Universidade Federal do Paran\'a,  
		Caixa Postal 19096\\  CEP 81530-090, Curitiba, Paran\'a,  
		Brasil}
	\email{pedro.tokoro@ufpr.br}

	\subjclass{Primary 35N10, 58J10; Secondary 35B65, 58J40}
	
	\keywords{Global hypoellipticity, Gevrey classes,	Involutive systems, Scattering manifolds, Partial Fourier series,	Diophantine conditions}

\begin{abstract}
	We investigate the global Gevrey hypoellipticity of a class of first-order differential operators associated with tube-type involutive structures on $M\times\mathbb{T}^m$, where $M$ is a non-compact manifold diffeomorphic to the interior of a compact manifold with boundary and $\mathbb{T}^m$ is the $m$-dimensional torus. For $s>1$, we work in Gevrey classes of Roumieu and Beurling type. A key step is the construction, on $M$, of a scattering metric whose coefficients are Gevrey of order $s$ in every analytic chart; this allows us to use Hodge theory and obtain Gevrey regularity for the harmonic forms. Under a natural condition on the defining closed $1$-forms, we obtain a sharp criterion for global Gevrey hypoellipticity in terms of rationality and (Roumieu/Beurling) exponential Liouville behavior.
	\end{abstract}

	\maketitle

	\section{Introduction}
	
	In this paper we provide a characterization of global hypoellipticity in Gevrey classes of Roumieu and Beurling type for a family of real involutive systems on a class of non-compact manifolds. 
	Let $M$ be an analytic paracompact manifold. For $s\ge1$ we denote by $\mathscr{G}^s(M)$ and $\mathscr{G}^{(s)}(M)$ the Gevrey classes of order $s$ on $M$ of Roumieu and Beurling type, respectively, and by $\mathsf{\Lambda}^1\mathscr{G}^s(M)$ and $\mathsf{\Lambda}^1\mathscr{G}^{(s)}(M)$ the corresponding spaces of Gevrey $1$-forms.
	
	Let $\omega_1,\dots,\omega_m$ be real-valued closed $1$-forms on $M$, belonging either to $\mathsf{\Lambda}^1\mathscr{G}^s(M)$ or to $\mathsf{\Lambda}^1\mathscr{G}^{(s)}(M)$. We study the operator
	\begin{equation}\label{L}
		\mathbb{L}:\mathscr{G}^{[s]}(M\times\mathbb{T}^m)\longrightarrow \mathsf{\Lambda}^1 \mathscr{G}^{[s]}(M\times\mathbb{T}^m),
		\qquad 
		\mathbb{L}u = \mathrm{d}_t u + \sum_{k=1}^{m} \omega_k \wedge \partial_{x_k} u,
	\end{equation}
	where $\mathbb{T}^m$ is the $m$-dimensional torus, $\mathrm{d}_t$ is the exterior derivative on $M$, and $[s]$ stands for $s$ or $(s)$. 
	
	Operators of the form \eqref{L} arise naturally in the study of tube-type involutive structures and may be viewed locally as systems of first-order linear partial differential equations. In the compact case, global properties of \eqref{L} have been extensively investigated, see, for instance, \cite{BCM1993,BCP1996,BP1999_jmaa,ADL2023gh,ADL2023gs,HZ2017,HZ2019,LM2024}. For the general theory of involutive systems we refer to \cite{BCH_book,Treves}.
	
	We recall the notions of global Gevrey hypoellipticity.
	
	\begin{definition}
		The operator $\mathbb{L}$ in \eqref{L} is called $[s]$-globally hypoelliptic if
			\[
			u \in \mathscr{D}'(M\times\mathbb{T}^m),\ \mathbb{L}u \in \mathsf{\Lambda}^1 \mathscr{G}^{[s]}(M\times\mathbb{T}^m)
			\ \Rightarrow\ 
			u \in \mathscr{G}^{[s]}(M\times\mathbb{T}^m).
			\]
	\end{definition}
	Here $\mathscr{D}'(M\times\mathbb{T}^m)$ denotes the space of distributions on $M\times\mathbb{T}^m$.
	
	Throughout the paper we assume $s>1$ and that $M$ is diffeomorphic to the interior of a compact manifold with boundary $\overline{M}$. 
	Let $\varrho:\overline{M}\to[0,+\infty)$ be a boundary defining function on $\overline{M}$. 
	A scattering metric on $M$ is a Riemannian metric $g$ which, in a collar neighbourhood of $\partial M$, has the form
	\begin{equation*}
		g = \frac{\mathrm{d}\varrho^2}{\varrho^4}+\frac{g'}{\varrho^2},
	\end{equation*}
	where $g'$ is a smooth symmetric $2$-cotensor restricting to a Riemannian metric on $\partial M$. In this case, $(M,g)$ is said to be a scattering manifold.
	
	The existence of a scattering metric is important in a key step: it provides a suitable version of the Hodge theorem (see \cite[Theorem~6.2]{Melrose_GST}), identifying the space of square-integrable harmonic $1$-forms with the image of the compactly supported de Rham cohomology inside $H^1_{\mathrm{dR}}(M)$. With this, in \cite{CKMT} the authors were able to extend the classic results for compact manifolds to this class of non-compact manifolds. By combining an analytic atlas with Gevrey partitions of unity (hence the restriction $s>1$), we construct a boundary defining function and a scattering metric whose coefficients are Gevrey of order $s$. The associated Laplace--Beltrami operator is then elliptic with Gevrey coefficients, which yields Gevrey regularity of harmonic forms and allows us to represent certain cohomology classes by Gevrey harmonic forms.
	
	In the compact case treated in \cite{ADL2023gh}, the authors rely on a theorem by Grauert \cite{Grauert} to endow $M$ with an analytic metric, which also permits the treatment of the case $s=1$. In the present non-compact setting, however, one needs a metric that simultaneously has Gevrey coefficients and is compatible with the scattering geometry near the boundary in order to apply the Hodge theorem, and
Grauert's theorem does not provide this additional structure. However, assuming the existence of an analytic scattering metric, our results naturally extend to the case $s=1$.
	
	Our analysis follows the strategy introduced in \cite{ADL2023gh}. Expanding $u$ in partial Fourier series on $\mathbb{T}^m$ reduces \eqref{L} to a family of twisted operators on $M$, and the obstruction to global Gevrey regularity is governed by small-denominator phenomena. These are encoded by the notions of rationality and of exponential Liouville behavior for the family $\boldsymbol{\omega}=(\omega_1,\dots,\omega_m)$, expressed in terms of periods along a basis of cycles associated with the image of compactly supported cohomology. We denote by $\mathsf{\Lambda}^1\mathscr{G}^{[s]}_{\partial M}(M)$ the space of real-valued closed Gevrey $1$-forms whose cohomology class lies in this distinguished subspace of $H^1_{\mathrm{dR}}(M)$.
	
	Our main result is the following characterization.
	
	\begin{theorem}
		Fix $s>1$ and let $\boldsymbol{\omega}=(\omega_1,\dots,\omega_m)$ be a family of real-valued closed $1$-forms in $\mathsf{\Lambda}^1\mathscr{G}^{[s]}_{\partial M}(M)$. 
		Then the operator $\mathbb{L}$ defined in \eqref{L} is $[s]$-globally hypoelliptic if and only if $\boldsymbol{\omega}$ is neither rational nor $[s]$-exponential Liouville.
	\end{theorem}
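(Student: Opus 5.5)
The plan follows the strategy of \cite{ADL2023gh}, adapted to the scattering setting of \cite{CKMT}, and proceeds through partial Fourier series in $x\in\mathbb{T}^m$. Write $u=\sum_{\xi\in\mathbb{Z}^m}\hat u(\cdot,\xi)e^{ix\cdot\xi}$. Then $\mathbb{L}u=f$ becomes the family of twisted equations
\[
\mathrm{d}_t\hat u(\cdot,\xi)+i(\xi\cdot\boldsymbol{\omega})\,\hat u(\cdot,\xi)=\hat f(\cdot,\xi),\qquad \xi\cdot\boldsymbol{\omega}=\sum_k\xi_k\omega_k .
\]

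First I establish a characterization of $\mathscr{G}^{[s]}(M\times\mathbb{T}^m)$ through partial Fourier coefficients. Roughly, $u$ is $[s]$-Gevrey if and only if, on each compact $K\subset M$, the derivatives of $\hat u(\cdot,\xi)$ satisfy Gevrey bounds uniformly in $\xi$ with a weight $e^{-\varepsilon|\xi|^{1/s}}$. The Roumieu case uses some $\varepsilon$, the Beurling case every $\varepsilon$. Since $M$ is non-compact, global hypoellipticity here is a local-uniform statement over compacta, so the estimates are needed on an exhaustion of $M$.

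Next comes the normalization. Since each $\omega_k$ lies in $\mathsf{\Lambda}^1\mathscr{G}^{[s]}_{\partial M}(M)$, I use the Gevrey scattering metric constructed earlier and the Hodge theorem \cite[Theorem~6.2]{Melrose_GST}. This lets me write $\omega_k=h_k+\mathrm{d}_t\phi_k$, with $h_k$ harmonic and $L^2$, and $\phi_k\in\mathscr{G}^{[s]}(M)$. The harmonic part is Gevrey by ellipticity with Gevrey coefficients, and hence so is $\phi_k$, being a primitive of a Gevrey exact form. Conjugating by the $[s]$-Gevrey automorphism $u\mapsto e^{i\sum_k\phi_k\partial_{x_k}}u$, i.e.\ the translation $x\mapsto x+\phi(t)$, preserves $\mathscr{G}^{[s]}$ and $\mathscr{D}'$. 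So I may assume that $\boldsymbol{\omega}$ is harmonic, or better, that it is a real combination of a fixed Gevrey basis $\theta_1,\dots,\theta_N$ dual to cycles $\sigma_1,\dots,\sigma_N$ representing the compactly supported image.

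Sufficiency. Suppose $\boldsymbol{\omega}$ is not rational. Then $\xi\cdot\boldsymbol{\omega}$ is non-exact for every $\xi\neq0$, so each twisted equation has at most one solution. It is given, along paths from a base point $t_0$, by an explicit formula involving $(1-e^{2\pi i\xi\cdot\int_{\sigma_j}\boldsymbol{\omega}})^{-1}$ times integrals of $\hat f$ over loops. Not being $[s]$-exponential Liouville gives the lower bound
\[
\max_j\bigl|1-e^{2\pi i\xi\cdot\int_{\sigma_j}\boldsymbol\omega}\bigr|\ge C_\varepsilon e^{-\varepsilon|\xi|^{1/s}},
\]
for every $\varepsilon$ in the Roumieu case and for some $\varepsilon$ in the Beurling case. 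This loses only an exponential factor of order $|\xi|^{1/s}$, which is absorbed into the decay of $\hat f$. Derivative estimates on compacta follow from the equation itself, since $\mathrm{d}_t\hat u$ is expressed through $\hat u$ and $\hat f$, and $|\xi\cdot\boldsymbol\omega|$ grows only polynomially. The main obstacle is making this uniform: the loops $\sigma_j$ and the path lengths must be controlled on compacta of a non-compact $M$, and the Gevrey constants of $\theta_j$ must be handled on the exhaustion. Non-compactness enters only through the finite-dimensionality of the distinguished cohomology and the compact support of the cycles, so I would fix compact loops once and argue on each compact $K$ containing them.

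Necessity. If $\boldsymbol{\omega}$ is rational, some $\xi_0\neq0$ makes $\xi_0\cdot\boldsymbol\omega$ exact with integral periods, $=\mathrm{d}_t\psi$ with $e^{i\psi}$ well defined. Then $u=\sum_n e^{-in\psi}e^{inx\cdot\xi_0}$ restricted to a sparse set of $n$ (or the delta-type sum) gives a non-Gevrey distribution with $\mathbb{L}u=0$. If $\boldsymbol\omega$ is $[s]$-exponential Liouville, there is a sequence $\xi_\ell$ along which the small denominators decay like $e^{-\varepsilon|\xi_\ell|^{1/s}}$, with the quantifier in $\varepsilon$ opposite to the above. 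I then build $u=\sum_\ell e^{-i\xi_\ell\cdot\Phi(t)}\chi_\ell(t)e^{ix\cdot\xi_\ell}$ using multivalued primitives $\Phi$ cut along hypersurfaces dual to the $\sigma_j$, so that $\mathbb{L}u$ has coefficients of size equal to the small denominators, hence Gevrey, while $\hat u(\cdot,\xi_\ell)$ does not decay, so $u\notin\mathscr{G}^{[s]}$. Matching the Roumieu and Beurling quantifiers in these constructions is routine, following \cite{ADL2023gh}.
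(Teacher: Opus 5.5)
Your proposal is correct and follows essentially the paper's route. It reduces to the twisted equations by partial Fourier series, and recovers $\widehat{u}_\xi(t_0)$ from loop integrals whose small denominators $1-e^{i\xi\cdot\int_{\sigma_\ell}\boldsymbol{\omega}}$ are controlled by the Diophantine lower bound; as in the paper, that bound comes from the Gevrey harmonic basis given by the scattering Hodge theorem. The counterexamples are likewise built from integral forms approximating $\xi\cdot\boldsymbol{\omega}$.

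Your departures are only implementation variants:
\begin{itemize}
\item the preliminary Gevrey shear $x\mapsto x+\boldsymbol{\phi}(t)$;
\item bootstrapping derivative bounds from the equation, which rests on the same estimate $|\xi|^k\leq C_\varepsilon^k k!^s e^{\varepsilon|\xi|^{1/s}}$ as the paper's lemma on $\partial_t^\alpha e^{i\xi\cdot\boldsymbol{\phi}}$;
\item cut primitives corrected by factors $\chi_\ell$. These must be unimodular phase corrections rather than cutoffs, and then they reproduce the paper's $e^{i\psi_j}$ with $\mathrm{d}\psi_j=\Pi^*\theta_j$ for integral $\theta_j$.
\end{itemize}
Only mind the normalization: since integrality is defined by $\frac{1}{2\pi}\int_\sigma\omega\in\mathbb{Z}$, the relevant phase is $e^{i\xi\cdot\int_{\sigma_j}\boldsymbol{\omega}}$, not $e^{2\pi i\xi\cdot\int_{\sigma_j}\boldsymbol{\omega}}$.
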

	
	The paper is organized as follows. In Section~\ref{sec-GSmanif} we recall the basic properties of Gevrey classes on open subsets of $\mathbb{R}^n$ and introduce the corresponding Roumieu and Beurling spaces on analytic manifolds, including Gevrey $1$-forms and the relevant boundedness criteria. In Section~\ref{gev_sc_met} we construct a Gevrey scattering metric on the interior of a compact manifold with boundary and derive Gevrey regularity for harmonic forms via elliptic regularity for the associated Laplace--Beltrami operator. Section~\ref{sec-Matrix-cycles} develops the cohomological framework needed in the non-compact setting: we define the space $\mathsf{\Lambda}^1\mathscr{G}^{[s]}_{\partial M}(M)$, associate to a family of closed forms its matrix of cycles, and relate rationality and exponential Liouville conditions to suitable Diophantine estimates. In Section~\ref{sec-sHypoel} we study the global Gevrey hypoellipticity of $\mathbb{L}$ using partial Fourier series on $\mathbb{T}^m$, reducing the problem to a family of twisted equations on $M$ and proving the characterization stated in the main theorem. Finally, for the reader's convenience, Appendix~\ref{appendix} collects the definitions and main results on partial Fourier series in Gevrey spaces used throughout the paper.

	\section{Gevrey spaces on manifolds}\label{sec-GSmanif}
	
	In this section, we introduce the notation and recall basic properties of Gevrey spaces on manifolds. We begin with the classical definition of Gevrey spaces on open subsets of $\mathbb{R}^n$.
	
	Let $\Omega \subset \mathbb{R}^n$ be an open set and fix $s \geq 1$. For each compact subset $K\subset\Omega$ (Notation: $K \Subset \Omega$) and each $h > 0$, we define the Banach space $\mathscr{G}^{s,h}(K)$ as the space of all functions $f \in C^\infty(\Omega)$ such that
	\[
	\|f\|_{K,h} := \sup_{\alpha \in \mathbb{N}_0^n} h^{-|\alpha|} \alpha!^{-s} 
	\sup_{x \in K} |\partial^\alpha f(x)| < \infty.
	\]
	
	We then define
	\[
	\mathscr{G}^s(K) := \bigcup_{h > 0} \mathscr{G}^{s,h}(K), 
	\qquad 
	\mathscr{G}^{(s)}(K) := \bigcap_{h > 0} \mathscr{G}^{s,h}(K),
	\]
	endowed with the inductive and projective limit topologies, respectively. These local constructions give rise to the global Roumieu and Beurling-type Gevrey spaces
	\[
	\mathscr{G}^s(\Omega) := \bigcap_{K \Subset \Omega} \mathscr{G}^s(K), 
	\qquad 
	\mathscr{G}^{(s)}(\Omega) := \bigcap_{K \Subset \Omega} \mathscr{G}^{(s)}(K),
	\]
	both endowed with the projective limit topology.
	
	\begin{proposition}
		Let $s \geq 1$. The spaces $\mathscr{G}^s(\Omega)$ and $\mathscr{G}^{(s)}(\Omega)$ are commutative algebras under pointwise multiplication and are stable under differentiation.
	\end{proposition}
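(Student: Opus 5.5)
The plan is to prove everything at the level of a fixed compact set $K\Subset\Omega$ with explicit control of the constant $h$. Since both $\mathscr{G}^s(\Omega)$ and $\mathscr{G}^{(s)}(\Omega)$ are intersections over $K$, it then suffices to show two things. First, for $f,g\in\mathscr{G}^{s,h}(K)$ one has $fg\in\mathscr{G}^{s,2h}(K)$. Second, for $f\in\mathscr{G}^{s,h}(K')$ with $K'$ a slightly larger compact (or simply with $K$ itself), one has $\partial_j f\in\mathscr{G}^{s,h'}(K)$ for every $h'>h$. These inclusions immediately give stability in the Roumieu case, which is a union over $h$. They also give it in the Beurling case: given any target $h_0>0$, apply the inclusion starting from $h_0/2$ (respectively from any $h<h_0$), which is allowed because the function lies in $\mathscr{G}^{s,h}(K)$ for every $h$. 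Commutativity is clear, and linearity of the spaces is immediate from the triangle inequality for $\|\cdot\|_{K,h}$.

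For the product, use the Leibniz formula $\partial^\alpha(fg)=\sum_{\beta\le\alpha}\binom{\alpha}{\beta}\partial^\beta f\,\partial^{\alpha-\beta}g$. This gives, on $K$,
\[
|\partial^\alpha(fg)|\le \|f\|_{K,h}\|g\|_{K,h}\,h^{|\alpha|}\sum_{\beta\le\alpha}\binom{\alpha}{\beta}\beta!^s(\alpha-\beta)!^s .
\]
Since $\beta!(\alpha-\beta)!\le\alpha!$ and $s\ge1$, we have $\beta!^s(\alpha-\beta)!^s\le\alpha!^{s-1}\beta!(\alpha-\beta)!$. Hence each term is at most $\alpha!^s$, and the sum is bounded by $\alpha!^s\sum_{\beta\le\alpha}\binom{\alpha}{\beta}\le 2^{|\alpha|}\alpha!^s$. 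This yields $\|fg\|_{K,2h}\le\|f\|_{K,h}\|g\|_{K,h}$.

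For differentiation, with $e_j$ the $j$-th unit vector, we have $\sup_K|\partial^\alpha\partial_j f|\le\|f\|_{K,h}h^{|\alpha|+1}(\alpha+e_j)!^s$. Moreover $(\alpha+e_j)!=(\alpha_j+1)\alpha!\le(|\alpha|+1)\alpha!$. Thus
\[
h'^{-|\alpha|}\alpha!^{-s}\sup_K|\partial^\alpha\partial_jf|\le \|f\|_{K,h}\,h\,(|\alpha|+1)^s(h/h')^{|\alpha|},
\]
and the right-hand side is bounded uniformly in $\alpha$ because polynomial growth is dominated by the geometric decay of $(h/h')^{|\alpha|}$ when $h'>h$.

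The only step requiring care is this loss $h\to h'$ in the derivative estimate. In the Roumieu case the loss is harmless because any $h'>h$ is admissible. In the Beurling case one must take $h<h'$ arbitrarily small, which is allowed since $f$ lies in every $\mathscr{G}^{s,h}(K)$. Continuity of multiplication and differentiation in the respective limit topologies also follows from these norm estimates, if that is wanted.
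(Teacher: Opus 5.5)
Your proof is correct and follows essentially the standard argument of Rodino's Proposition 1.4.5, to which the paper simply defers. That argument uses the Leibniz rule with $\beta!(\alpha-\beta)!\le\alpha!$ for products, and absorbs the factor $(|\alpha|+1)^s$ by enlarging $h$ for derivatives. Your explicit tracking of the losses $h\to 2h$ and $h\to h'$, with $h$ allowed arbitrarily small, is exactly what justifies the paper's remark that the Beurling case follows by analogous arguments.
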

	
	\begin{proof}
		See \cite[Proposition~1.4.5]{Rod_Gevrey} for the Roumieu case. The Beurling case follows by analogous arguments.
	\end{proof}
	
	\begin{proposition}
		If $1 \leq s_0 < s$, then $
		\mathscr{G}^{s_0}(\Omega) \subset \mathscr{G}^{(s)}(\Omega)$.
	\end{proposition}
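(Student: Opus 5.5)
The plan is to reduce the inclusion to a single compact set and then absorb the factor coming from the Roumieu constant into the gap $s-s_0>0$. By definition, $\mathscr{G}^{s_0}(\Omega)=\bigcap_{K}\mathscr{G}^{s_0}(K)$ and $\mathscr{G}^{(s)}(\Omega)=\bigcap_{K}\mathscr{G}^{(s)}(K)$. So it suffices to show that $\mathscr{G}^{s_0}(K)\subset\mathscr{G}^{(s)}(K)$ for every $K\Subset\Omega$.

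Fix $f\in\mathscr{G}^{s_0}(K)$. There is then some $h_0>0$ with $C:=\|f\|_{K,h_0}<\infty$ in the order-$s_0$ norm, that is,
\[
\sup_{x\in K}|\partial^\alpha f(x)|\le C\,h_0^{|\alpha|}\,\alpha!^{s_0}\qquad\text{for all }\alpha\in\mathbb{N}_0^n .
\]
Now let $h>0$ be arbitrary. We must show that $h^{-|\alpha|}\alpha!^{-s}\sup_K|\partial^\alpha f|$ is bounded uniformly in $\alpha$. Using the estimate above, this quantity is at most
\[
C\,\Big(\frac{h_0}{h}\Big)^{|\alpha|}\,\alpha!^{-(s-s_0)} .
\]
So the whole proof reduces to showing that $\sup_{\alpha}\lambda^{|\alpha|}\alpha!^{-\delta}<\infty$ for every $\lambda>0$ and $\delta:=s-s_0>0$.

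This is the only real step. I would use the elementary multinomial bound $|\alpha|!\le n^{|\alpha|}\alpha!$, which follows from $n^{k}=\sum_{|\alpha|=k}\frac{k!}{\alpha!}$. It gives $\alpha!^{-\delta}\le n^{\delta|\alpha|}\,|\alpha|!^{-\delta}$. It therefore suffices to check that
\[
\sup_{k\in\mathbb{N}_0}\frac{\mu^{k}}{k!^{\delta}}<\infty,\qquad \mu=\lambda n^{\delta}.
\]
This holds because $\mu^k/k!^\delta=\big(\mu^{k/\delta}/k!\big)^{\delta}=\big((\mu^{1/\delta})^k/k!\big)^\delta\le e^{\delta\mu^{1/\delta}}$, using $t^k/k!\le e^t$.

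Hence $f\in\mathscr{G}^{s,h}(K)$ for every $h>0$, so $f\in\mathscr{G}^{(s)}(K)$. Taking the intersection over $K$ gives the proposition. One can also note, though the statement does not require it, that the bound obtained makes the inclusion continuous from each step $\mathscr{G}^{s_0,h_0}(K)$ into every $\mathscr{G}^{s,h}(K)$.
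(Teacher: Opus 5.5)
Your proof is correct and follows essentially the same route as the paper. Both fix a compact $K$, split $\alpha!^{s_0}=\alpha!^{s}\,\alpha!^{-(s-s_0)}$, and absorb $(h_0/h)^{|\alpha|}\alpha!^{-(s-s_0)}$ using $|\alpha|!\le n^{|\alpha|}\alpha!$ and $\zeta^{k}/k!\le e^{\zeta}$; your implicit choice $\zeta=\mu^{1/\delta}=n(h_0/h)^{1/(s-s_0)}$ is exactly the paper's explicit one.
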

	
	\begin{proof}
		Let $f \in \mathscr{G}^{s_0}(\Omega)$ and let $K \Subset \Omega$. Then there exist constants $C,H>0$ such that
		\[
		\sup_{t \in K} |\partial_t^\alpha f(t)| \leq C H^{|\alpha|}\alpha!^{s_0},
		\qquad \forall\, \alpha \in \mathbb{N}_0^n.
		\]
		
		Writing $\alpha!^{s_0} = \alpha!^s \alpha!^{-(s-s_0)}$ and using the inequalities
		\[
		\alpha! \geq n^{-|\alpha|}|\alpha|!, 
		\quad 
		|\alpha|! \geq \zeta^{|\alpha|} e^{-\zeta}, \qquad \forall\,\alpha\in\mathbb{N}_0^n,\ \forall,\zeta>0,
		\]
		we obtain
		\[
		\alpha!^{-(s-s_0)} \leq \left(\frac{n^{s-s_0}}{\zeta^{s-s_0}}\right)^{|\alpha|} e^{(s-s_0)\zeta},
		\qquad \forall\,\alpha\in\mathbb{N}_0^n,\ \forall\, \zeta>0.
		\]
		
		Hence,
		\[
		\sup_{t \in K} |\partial_t^\alpha f(t)|
		\leq C e^{(s-s_0)\zeta}
		\left(\frac{n^{s-s_0}H}{\zeta^{s-s_0}}\right)^{|\alpha|} \alpha!^s,\qquad\forall\,\alpha\in\mathbb{N}_0^n,\ \forall\,\zeta>0.
		\]
		
		Given $h>0$, choosing $
		\zeta = n(Hh^{-1})^{\frac{1}{s-s_0}}$, we obtain
		\[
		\frac{n^{s-s_0}H}{\zeta^{s-s_0}} = h.
		\]
		
		It follows that
		\[
		\sup_{t \in K} |\partial_t^\alpha f(t)| \leq C_0 h^{|\alpha|}\alpha!^s,\qquad\forall\,\alpha\in\mathbb{N}_0^n
		\]
		where $C_0 = C e^{(s-s_0)\zeta}$. Since $h>0$ is arbitrary, this shows that
		$f \in \mathscr{G}^{(s)}(\Omega)$.
	\end{proof}
	
	\begin{proposition}
		Let $\chi:\Omega \to \Omega'$ be a real-analytic mapping between open sets
		$\Omega \subset \mathbb{R}^n$ and $\Omega' \subset \mathbb{R}^m$.
		If $f \in \mathscr{G}^s(\Omega')$, then $f \circ \chi \in \mathscr{G}^s(\Omega)$.
		Moreover, if $f \in \mathscr{G}^{(s)}(\Omega')$, then
		$f \circ \chi \in \mathscr{G}^{(s)}(\Omega)$.
	\end{proposition}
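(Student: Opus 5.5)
The plan is to derive the statement from the classical Faà di Bruno estimates for compositions, using the fact that real-analytic maps are Gevrey of order $1$. Fix $K\Subset\Omega$. Since $\chi$ is real-analytic, its components are of class $\mathscr{G}^1$ near $K$: there are a compact neighbourhood $K_1$ of $K$ in $\Omega$ and constants $C_1,A>0$ with
\[
\sup_{t\in K_1}|\partial^\beta\chi_j(t)|\le C_1A^{|\beta|}\beta!,\qquad \forall\,\beta\in\mathbb{N}_0^n,\ j=1,\dots,m .
\]
Put $K'=\chi(K)\Subset\Omega'$. In the Roumieu case $f\in\mathscr{G}^s(K')$, so $\sup_{K'}|\partial^\gamma f|\le C_2H^{|\gamma|}\gamma!^s$ for some $C_2,H>0$.

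Next, expand $\partial^\alpha(f\circ\chi)$ with the multivariate Faà di Bruno formula. Each term is a derivative $(\partial^\gamma f)(\chi(t))$, with $1\le|\gamma|\le|\alpha|$, multiplied by a product of $|\gamma|$ derivatives of components of $\chi$ whose orders add up to $\alpha$. The cleanest way to handle the combinatorics is by majorant series. The composition $\Phi(\tau)=\phi(\psi(\tau))$ of the one-variable majorants
\[
\psi(\tau)=\frac{C_1A\tau}{1-A\tau},\qquad \phi(\sigma)=\sum_k C_2H^k k!^{s-1}\sigma^k ,
\]
the latter being a formal majorant of the Gevrey weights, dominates all the coefficient sums. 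One then gets
\[
|\partial^\alpha(f\circ\chi)(t)|\le |\alpha|!\,\sum_{k=1}^{|\alpha|}C_2H^kk!^{s-1}(m C_1)^k\binom{|\alpha|-1}{k-1}A^{|\alpha|}.
\]
Using $k!^{s-1}\le|\alpha|!^{s-1}$ and $\sum_k\binom{|\alpha|-1}{k-1}x^k\le x(1+x)^{|\alpha|}$, this becomes $C_2' (B)^{|\alpha|}|\alpha|!^s$ with $B=A(1+mC_1H)\max(1,mC_1H)$. Finally $|\alpha|!\le n^{|\alpha|}\alpha!$ converts this into $C'(n^sB)^{|\alpha|}\alpha!^s$ on $K$, so $f\circ\chi\in\mathscr{G}^s(K)$ for every $K$, which is the Roumieu claim. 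Standard references, for instance \cite[Proposition~1.4.6]{Rod_Gevrey}, contain exactly this computation, and I would cite them for the combinatorial identity.

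For the Beurling case, the goal is to track how $B$ depends on $H$: we need $B\to0$ as $H\to0$. As written, $B$ does not shrink with $H$, because of the factor $A^{|\alpha|}$ coming from $\chi$. This is the main obstacle: analyticity of $\chi$ gives only a fixed radius $A$. I would remove it by exploiting that $k!^{s-1}$ is much smaller than $|\alpha|!^{s-1}$ when $k\ll|\alpha|$. Split the Faà di Bruno sum at $k=\varepsilon|\alpha|$. For $k\ge\varepsilon|\alpha|$, the factor $H^k\le H^{\varepsilon|\alpha|}$ is small when $H$ is small. For $k<\varepsilon|\alpha|$, the bound $k!^{s-1}\le |\alpha|!^{s-1}(\varepsilon|\alpha|)^{-(s-1)(1-\varepsilon)|\alpha|}\cdot e^{O(|\alpha|)}$ gives a factor that beats any geometric growth. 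Alternatively, and more simply, I would first bound $\sup_K|\partial^\alpha(f\circ\chi)|\le C\,(cH)^{|\alpha|/?}\ldots$. If the splitting argument turns out messy, I would instead replace $\chi$ by its rescaled version: given $h>0$, cover $K$ by finitely many small balls of radius $r$ and work on each. On a ball of radius $r$ around $t_0$, write $\chi=\chi(t_0)+$ an analytic perturbation, and apply Cauchy estimates on a polydisc of radius $\rho$. Taking $\rho$ large relative to $A^{-1}$ is not possible, so the splitting remains the primary route. Concretely, with $B(H,\varepsilon)\lesssim A\,(H^{\varepsilon}+\varepsilon^{-(s-1)}\text{-terms decaying})$, one chooses $\varepsilon$ and then $H$ so that $B<h$. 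Since $f\in\mathscr{G}^{(s)}(K')$ allows arbitrarily small $H$, this yields $f\circ\chi\in\mathscr{G}^{(s)}(K)$ for all $K$.
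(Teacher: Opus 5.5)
Your Roumieu argument is the standard majorant computation behind \cite[Proposition~1.4.6]{Rod_Gevrey}, which is what the paper cites for that case. Your coefficient bound $N!\,A^N\sum_{k}C_2(mC_1H)^k k!^{s-1}\binom{N-1}{k-1}$, with $N=|\alpha|$, is correct.

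For the Beurling case your route genuinely differs from the paper's, and yours is the one that works. The paper applies Faà di Bruno with a constant ``depending only on $n$ and $k$'' (so not uniform in $k=|\alpha|$). It arrives at $|\partial^\alpha(g\circ\chi)|\le C_h'A_1^{|\alpha|}\alpha!^s$ and concludes ``since $h>0$ was arbitrary''. But $h$ now appears only in the constant, while the ratio $A_1$ is fixed by the analyticity of $\chi$. That is a Roumieu-type estimate and does not give membership in $\mathscr{G}^{(s)}$.

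You identified exactly this obstruction, and splitting the sum at $k=\varepsilon N$ removes it. Concretely, take $\varepsilon=1/2$ and $x=mC_1H\le 1$, and use $\binom{N-1}{k-1}\le 2^N$ together with $N!/k!\ge (N/2)^{N/2}$ for $k<N/2$. Then
\[
\sum_{k\ge N/2}\le N\,2^N x^{N/2}N!^{s-1},\qquad \sum_{k<N/2}\le N\,2^N\Bigl(\frac N2\Bigr)^{-(s-1)N/2}N!^{s-1}.
\]
After multiplying by $C_2A^NN!$:
\begin{itemize}
\item[(a)] The second piece is $\le C_2C_h h^NN!^s$ for every $h>0$, because of the superexponential factor.
\item[(b)] The first piece is $\le C_2N(h/2)^NN!^s$ once $H$ is chosen with $2A\sqrt{mC_1H}\le h/2$.
\end{itemize}
Then $N!^s\le n^{sN}\alpha!^s$ finishes the proof. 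Write it this way, treating the two pieces separately rather than through a single ratio $B(H,\varepsilon)$; the small-$k$ piece is not geometric. Also delete the line containing $(cH)^{|\alpha|/?}$ and the abandoned rescaling paragraph.

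State explicitly that the Beurling part needs $s>1$: the entire gain comes from the factor $(k!/N!)^{s-1}$. This restriction is necessary, not an artifact. Take $f(y)=y$, which lies in $\mathscr{G}^{(1)}(\mathbb{R})$, and the real-analytic map $\chi(t)=(1+t^2)^{-1}$. Then $f\circ\chi=\chi$ and $|\chi^{(2j)}(0)|=(2j)!$, so $f\circ\chi\notin\mathscr{G}^{(1)}(\mathbb{R})$. The paper's argument invokes only $s\ge 1$, which is further evidence that its final step is insufficient. Since the paper uses this proposition only for $s>1$, your proof, with the Beurling statement restricted to $s>1$, is what is actually needed.
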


	\begin{proof}
		The proof for the
		Roumieu case is classical and can be found in \cite[Proposition~1.4.6]{Rod_Gevrey}. For the sake of completeness and future references,
		we here provide a detailed proof for the Beurling case.
		
		Let $g \in \mathscr{G}^{(s)}(\Omega')$ and let $K \Subset \Omega$. Since $\chi$ is real-analytic, there exist constants $A_0,A_1>0$ such that
		\[
		\sup_{t\in K} |\partial_t^\gamma \chi(t)| \leq A_0 A_1^{|\gamma|} \gamma!,
		\qquad \forall\, \gamma \in \mathbb{N}_0^n.
		\]
		
		Moreover, since $\chi(K)$ is compact in $\Omega'$ and $g \in \mathscr{G}^{(s)}(\Omega')$, for every $h>0$ there exists $C_h>0$ such that
		\[
		\sup_{y\in \chi(K)} |\partial_y^\beta g(y)| \leq C_h h^{|\beta|} \beta!^s,
		\qquad \forall\, \beta \in \mathbb{N}_0^m.
		\]
		
		Fix $\alpha \in \mathbb{N}_0^n$ with $|\alpha|=k$. By the Faà di Bruno formula, there exists a constant $C>0$, depending only on $n$ and $k$, such that
		\[
		|\partial_t^\alpha (g\circ\chi)(t)|
		\leq C
		\sum_{1 \leq |\beta| \leq k}
		|\partial_t^\beta g(\chi(t))|
		\sum_{\substack{\gamma_1+\cdots+\gamma_{|\beta|}=\alpha\\ |\gamma_j|\geq 1}}
		\prod_{j=1}^{|\beta|}
		|\partial_t^{\gamma_j} \chi(t)|.
		\]
		
		Using the above estimates, we obtain
		\[
		|\partial_t^\alpha (g\circ\chi)(t)|
		\leq C C_h
		\sum_{1 \leq |\beta| \leq k}
		h^{|\beta|} \beta!^s
		\sum_{\substack{\gamma_1+\cdots+\gamma_{|\beta|}=\alpha}}
		\prod_{j=1}^{|\beta|}
		A_0 A_1^{|\gamma_j|} \gamma_j!.
		\]
		
		Since
		\[
		\prod_{j=1}^{|\beta|} \gamma_j! \leq \alpha!,
		\qquad
		\sum_{j=1}^{|\beta|} |\gamma_j| = |\alpha|,
		\]
		we obtain
		\[
		|\partial_t^\alpha (g\circ\chi)(t)|
		\leq C C_h
		(A_0)^{|\beta|}
		A_1^{|\alpha|}
		\alpha!
		\sum_{1 \leq |\beta| \leq k}
		h^{|\beta|} \beta!^s.
		\]
		
		Since $s \geq 1$, we have $\beta!^s \leq k!^s = |\alpha|!^s$. Therefore,
		\[
		|\partial_t^\alpha (g\circ\chi)(t)|
		\leq C C_h
		A_1^{|\alpha|}
		\alpha!^s
		\sum_{j=1}^{k} (A_0 h)^j.
		\]
		
		Choosing $h>0$ such that $A_0 h < 1$, we conclude that
		\[
		\sum_{j=1}^{k} (A_0 h)^j \leq \frac{A_0 h}{1-A_0 h},
		\]
		and hence
		\[
		|\partial_t^\alpha (g\circ\chi)(t)|
		\leq C_h'
		(A_1)^{|\alpha|}
		\alpha!^s.
		\]
		
		Since $h>0$ was arbitrary, this shows that $g\circ\chi \in \mathscr{G}^{(s)}(\Omega)$.
	\end{proof}
	
	For $s>1$, the spaces $\mathscr{G}_c^{s}(\Omega)$ and $\mathscr{G}_c^{(s)}(\Omega)$ of compactly supported Gevrey functions are nontrivial. The nontriviality of $\mathscr{G}_c^{s}(\Omega)$ for $s>1$ follows from \cite[Example~1.4.9]{Rod_Gevrey}. In the Beurling case, it suffices to observe that
	\[
	\mathscr{G}_c^{s_0}(\Omega)\subset \mathscr{G}_c^{(s)}(\Omega),
	\qquad 1<s_0<s.
	\]
	
	Now let $M$ be a smooth paracompact manifold. By a classical result of Whitney \cite{Whitney}, $M$ admits a countable and locally finite analytic atlas $
	\mathscr{A} = \{(\Omega_i,\chi_i)\}_{i\in I}$ which is compatible with its smooth structure.
	
	\begin{definition}
		Let $s\geq 1$. The space $\mathscr{G}^s(M)$ of Roumieu-type Gevrey functions on $M$ is defined as the space of all smooth functions $f$ on $M$ such that, for every analytic chart $(\Omega_i,\chi_i)$, the function $f\circ\chi_i^{-1}$ belongs to $\mathscr{G}^s(\chi_i(\Omega_i))$. Similarly, the space $\mathscr{G}^{(s)}(M)$ of Beurling-type Gevrey functions on $M$ consists of all smooth functions $f$ on $M$ such that $f\circ\chi_i^{-1}\in \mathscr{G}^{(s)}(\chi_i(\Omega_i))$ for every analytic chart $(\Omega_i,\chi_i)$.
	\end{definition}
	
	We shall use the notation $\mathscr{G}^{[s]}(M)$ to refer to either of the above cases, where ${[s]}$ stands for $s$ or $(s)$. If $s>1$, the spaces $\mathscr{G}_c^{[s]}(M)$ of compactly supported Gevrey functions on $M$ are well defined and nontrivial. As in the local case, if $1<s_0<s$, the following inclusions hold:
	\begin{equation}\label{inc_gev}
		\mathscr{G}^{(1)}(M)\subset
		\mathscr{G}^1(M)=C^\omega(M)\subset
		\mathscr{G}^{(s_0)}(M)\subset
		\mathscr{G}^{s_0}(M)\subset
		\mathscr{G}^{(s)}(M)\subset
		\mathscr{G}^s(M).
	\end{equation}
	
	Given a compact set $K \Subset M$, let $I_K \subset I$ be a finite subset of indices such that the coordinate domains $\{\Omega_i\}_{i\in I_K}$ cover $K$. Then one can choose compact sets $K_i \Subset \Omega_i$, $i\in I_K$, such that the family $\{K_i\}_{i\in I_K}$ still covers $K$.
	
	For each $h>0$, we define
	\begin{equation*}
		\|f\|_{K,h,M} := \sum_{i\in I_K} \|f\circ\chi_i^{-1}\|_{K_i,h}.
	\end{equation*}
	
	For each $h>0$, we set
	\[
	\mathscr{G}^{s,h}(K) := \{ f\in C^\infty(M) : \|f\|_{K,h,M}<\infty \}.
	\]
	
	Then,
	\[
	\mathscr{G}^s(K)
	= \bigcup_{h>0}\mathscr{G}^{s,h}(K)
	= \{ f\in C^\infty(M) : \|f\|_{K,h,M}<\infty \text{ for some } h>0 \},
	\]
	and
	\[
	\mathscr{G}^{(s)}(K)
	= \bigcap_{h>0}\mathscr{G}^{s,h}(K)
	= \{ f\in C^\infty(M) : \|f\|_{K,h,M}<\infty \text{ for all } h>0 \},
	\]
	endowed with the inductive and projective limit topologies, respectively.
	
	One can show that, for $h<h_+$, the inclusion mappings
	\[
	\mathscr{G}^{s,h}(K)\hookrightarrow \mathscr{G}^{s,h_+}(K)
	\]
	are compact. As a consequence, $\mathscr{G}^s(K)$ is a DFS space and
	$\mathscr{G}^{(s)}(K)$ is an FS space.
	
	The spaces $\mathscr{G}^s(M)$ and $\mathscr{G}^{(s)}(M)$ are then defined as the projective limits of $\mathscr{G}^s(K)$ and $\mathscr{G}^{(s)}(K)$, respectively, as $K$ ranges over all compact subsets of $M$. In particular, $\mathscr{G}^{(s)}(M)$ is an FS space, since it is the projective limit of FS spaces; see \cite{Komatsu1967}.
	
	A sequence $\{f_k\}_{k\in\mathbb{N}}$ in $\mathscr{G}^s(M)$ converges to $f\in\mathscr{G}^s(M)$ if and only if, for every $K\Subset M$, there exists $h>0$ such that
	\[
	\|f_k-f\|_{K,h,M} \to 0,\qquad k\to\infty.
	\]
	
	Similarly, a sequence $\{f_k\}_{k\in\mathbb{N}}$ in $\mathscr{G}^{(s)}(M)$ converges to $f\in\mathscr{G}^{(s)}(M)$ if and only if, for every $K\Subset M$ and every $h>0$,
	\[
	\|f_k-f\|_{K,h,M} \to 0,\qquad k\to\infty.
	\]
	
	As a consequence of the definition of the projective limit topology, a subset
	$B\subset \mathscr{G}^s(M)$ is bounded if and only if, for every $K\Subset M$, there exists $h>0$ such that $B$ is bounded in $\mathscr{G}^{s,h}(K)$. Similarly, a subset $B\subset \mathscr{G}^{(s)}(M)$ is bounded if and only if, for every $K\Subset M$ and every $h>0$, the set $B$ is bounded in $\mathscr{G}^{s,h}(K)$.
	
	We now introduce the spaces $\mathsf{\Lambda}^1\mathscr{G}^{[s]}(M)$ of Gevrey $1$-forms on $M$. In each local chart $(\Omega_i,\chi_i)$, a smooth $1$-form $f$ can be written as
	\[
	(f\circ\chi_i)(t) = \sum_{j=1}^n f_{ij}(t)\,\mathrm{d}t_{ij},
	\]
	where $(t_{i1},\dots,t_{in})$ are the local coordinates associated with $\chi_i$, and the coefficients $f_{ij}$ are smooth functions on $\chi_i(\Omega_i)$. If $(\Omega_i,\chi_i)$ is an analytic chart, then $f$ belongs to $\mathsf{\Lambda}^1\mathscr{G}^{[s]}(M)$ if and only if
	\[
	f_{ij}\in\mathscr{G}^{[s]}(\chi_i(\Omega_i))
	\qquad \forall\, i\in I,\ j=1,\dots,n.
	\]
	
	In each analytic chart $(\Omega_i,\chi_i)$, a Gevrey $1$-form can thus be identified with an element of $\big(\mathscr{G}^{[s]}(\chi_i(\Omega_i))\big)^n$. As before, given a compact set $K\Subset M$ and $h>0$, and choosing compact sets $K_i\Subset\Omega_i$ for $i\in I_K$, we define seminorms on $\mathsf{\Lambda}^1\mathscr{G}^{[s]}(M)$ by
	\[
	\|f\|_{K,h,\mathsf{\Lambda}^1}
	:= \sum_{i\in I_K}\sum_{j=1}^n \|f_{ij}\|_{K_i,h},
	\qquad h>0.
	\]
	
	More generally, the same construction applies to define Gevrey spaces of sections of analytic vector bundles over analytic manifolds. As a consequence of the definition of the above topologies, bounded sets in these spaces can be characterized as follows.
	
	\begin{proposition}
		A subset $B\subset \mathsf{\Lambda}^1\mathscr{G}^s(M)$ is bounded if and only if, for every $K\Subset M$, there exist constants $h>0$ and $C>0$ such that
		\[
		\|f\|_{K,h,\mathsf{\Lambda}^1} \leq C,
		\qquad \forall\, f\in B.
		\]
		Similarly, a subset $B\subset \mathsf{\Lambda}^1\mathscr{G}^{(s)}(M)$ is bounded if and only if, for every $K\Subset M$ and every $h>0$, there exists a constant $C>0$ such that
		\[
		\|f\|_{K,h,\mathsf{\Lambda}^1} \leq C,
		\qquad \forall\, f\in B.
		\]
	\end{proposition}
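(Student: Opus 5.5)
The plan is to reduce the statement to the definition of the topology on $\mathsf{\Lambda}^1\mathscr{G}^{[s]}(M)$ as a projective limit over compact sets $K\Subset M$ of the local spaces. Here the local spaces are inductive limits in the Roumieu case and projective limits in the Beurling case of the Banach spaces defined by the seminorm $\|\cdot\|_{K,h,\mathsf{\Lambda}^1}$. First, a subset of a projective limit $\varprojlim_K E_K$ is bounded iff its image in each $E_K$ is bounded, since the seminorms defining the projective topology are exactly those pulled back from the $E_K$. This reduces everything to a fixed $K$.

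The Beurling case at fixed $K$ is immediate. $\mathsf{\Lambda}^1\mathscr{G}^{(s)}(K)$ is a projective limit of the Banach spaces with norms $\|\cdot\|_{K,h,\mathsf{\Lambda}^1}$, $h>0$. Boundedness there means boundedness in each of these norms, which is the stated condition with a constant $C=C(K,h)$.

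The Roumieu case at fixed $K$ needs one more step. The "if" direction is clear, because the inclusion of the Banach space at level $h$ into the inductive limit is continuous, so bounded sets of the step space stay bounded. The "only if" direction requires regularity of the inductive limit: every bounded set of $\varinjlim_h$ must be contained and bounded in some step. I would obtain this from the remark preceding the statement. The inclusions between steps $h<h_+$ are compact, the same argument applying coefficientwise to $1$-forms (finitely many charts $i\in I_K$ and components $j$). Hence $\mathsf{\Lambda}^1\mathscr{G}^s(K)$ is a DFS space, and by Komatsu's theorem \cite{Komatsu1967} such inductive limits are regular. This regularity step is the only nontrivial point and is where I expect the main obstacle to lie. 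It relies on the compactness of the step inclusions, which in turn follows from an Arzelà--Ascoli type argument on the finitely many $K_i$. Derivatives bounded by $Ch^{|\alpha|}\alpha!^s$ are equicontinuous, and the factor $(h/h_+)^{|\alpha|}$ controls the tail in $\alpha$.

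Finally, I would note that the conclusion does not depend on the choice of $I_K$ and $K_i$, up to changing $h$ and $C$. On overlaps, the analytic transition maps preserve Gevrey estimates by the composition proposition above, which is applied to the coefficients together with the analytic Jacobian factors.
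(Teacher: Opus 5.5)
Your proposal is correct and follows the same route as the paper. The paper states this characterization without a separate proof, presenting it as a consequence of the projective-limit definition of the topology together with the preceding remark that the local spaces are DFS (Roumieu) and FS (Beurling); you have made explicit the one step the paper leaves implicit, namely that the ``only if'' direction in the Roumieu case needs regularity of the DFS inductive limit $\varinjlim_h$ (Komatsu), not just the definition of the projective topology. Your closing remark on independence of the choice of $I_K$ and $K_i$ is not needed, and for a fixed $K$ two such choices are compared by enlarging $K$ to contain the sets $\chi_i^{-1}(K_i)$, not merely by adjusting $h$ and $C$; this is harmless because the condition is quantified over all $K\Subset M$.
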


	\section{A Gevrey scattering metric}\label{gev_sc_met}
	
	Let $\overline{M}$ be a compact smooth manifold of dimension $n$ with boundary $\partial M$, and let $M$ denote its interior. In this section, for a fixed $s>1$, we construct a scattering metric $g$ on $M$ with Gevrey coefficients of order $s$ in every chart of a suitable analytic atlas. This construction relies on the existence of Gevrey partitions of unity.
	
	The existence of Gevrey partitions of unity of Roumieu type follows from the construction of compactly supported Gevrey functions on open subsets of $\mathbb{R}^n$ (see \cite[Example~1.4.9]{Rod_Gevrey}). The inclusions \eqref{inc_gev} imply the existence of Gevrey partitions of unity of Beurling type as well.
	
	\begin{definition}
		A \emph{boundary defining function} is a smooth function $\varrho:\overline{M}\to[0,+\infty)$ such that
		$\varrho^{-1}(0)=\partial M$ and $\mathrm{d}\varrho|_y \neq 0$ for all $y\in\partial M$.
	\end{definition}
	
	Near the boundary, $\overline{M}$ admits a collar neighbourhood diffeomorphic to $[0,1)_{\varrho}\times\partial M$. A Riemannian metric $g$ on $M$ is called a \emph{scattering metric} if, in such a collar neighbourhood, it takes the form
	\[
	g = \frac{\mathrm{d}\varrho^2}{\varrho^4} + \frac{g'}{\varrho^2},
	\]
	where $g'$ is a smooth symmetric $2$-cotensor that restricts to a Riemannian metric on $\partial M$. In this case, $(M,g)$ is called a scattering manifold. For further details on scattering geometry and analysis, we refer the reader to \cite{Melrose_APS,Melrose_SST,Melrose_GST}.

	We assume that $\overline{M}$ is endowed with a finite analytic atlas. This assumption is justified by the fact that the double $2M$ (obtained by gluing two copies of $\overline{M}$ along $\partial M$) is a closed manifold which we can endow with an analytic atlas due to Whitney \cite{Whitney}, and its analytic structure induces a compatible analytic atlas on $\overline{M}$.
	
	\begin{proposition}\label{bdf_gev}
		Let $\overline{M}$ be a compact manifold with boundary endowed with a finite analytic atlas $\mathscr{A}=\{(U_i,\phi_i)\}_{i=1}^N$, and let $s>1$. Then, there exists a boundary defining function $\varrho:\overline{M}\to[0,+\infty)$ which is Gevrey of order $s$ with respect to $\mathscr{A}$.
	\end{proposition}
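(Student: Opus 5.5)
The plan is to glue local boundary defining functions using a Gevrey partition of unity. Since the atlas is analytic and $\partial M$ is an analytic hypersurface of the double $2M$, each boundary chart $(U_i,\phi_i)$ with $U_i\cap\partial M\neq\emptyset$ can be taken, possibly after shrinking and composing with an analytic change of coordinates, to map $U_i$ onto a relatively open subset of the half-space $\{y_n\ge 0\}$ with $\phi_i(U_i\cap\partial M)\subset\{y_n=0\}$. In such a chart I set $\varrho_i:=y_n\circ\phi_i$. It is real-analytic in every chart of $\mathscr{A}$, since the transition maps are analytic and analytic functions compose analytically. It vanishes exactly on $U_i\cap\partial M$, is positive on $U_i\cap M$, and satisfies $\mathrm{d}\varrho_i\neq0$ on $U_i\cap\partial M$. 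For interior charts, with $U_i\subset M$, I set $\varrho_i:=1$.

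Next I would take a partition of unity $\{\psi_i\}_{i=1}^N$ subordinate to $\{U_i\}$, consisting of nonnegative functions with $\psi_i\circ\phi_i^{-1}\in\mathscr{G}^{[s]}_c$. These exist for $s>1$, as recalled at the start of Section~\ref{gev_sc_met}; in the Beurling case one uses the inclusions \eqref{inc_gev}. Each $\psi_j$ must be Gevrey in every chart $\phi_i$, not only in $\phi_j$. This follows from the proposition on composition with real-analytic maps, applied to $\psi_j\circ\phi_j^{-1}\circ(\phi_j\circ\phi_i^{-1})$ on $\phi_i(U_i\cap U_j)$; since $\operatorname{supp}\psi_j\Subset U_j$, the function vanishes near the rest of $\phi_i(U_i)$. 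The standard construction is to take bump functions $\theta_i\ge0$ of order $s_0\in(1,s)$ equal to $1$ on a shrunken cover, and then set $\psi_i=\theta_i/\sum_j\theta_j$. The quotient stays Gevrey because $\mathscr{G}^{[s]}$ is stable under $1/f$ for $f$ bounded away from zero. This is a standard fact (Rodino), which I would cite, or derive via Faà di Bruno applied to $t\mapsto 1/t$ composed with $f$, just as in the proof of the composition proposition.

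Define $\varrho:=\sum_i\psi_i\varrho_i$. By the algebra property, $\varrho$ is Gevrey of order $s$ in every chart of $\mathscr{A}$. On $\partial M$, every term with $\psi_i\neq0$ comes from a boundary chart, so it vanishes; hence $\varrho=0$ on $\partial M$. At an interior point every term is $\ge0$, and some $\psi_i>0$ there with $\varrho_i>0$, so $\varrho>0$; thus $\varrho^{-1}(0)=\partial M$. For the differential at $p\in\partial M$, the product rule gives $\mathrm{d}\varrho|_p=\sum_i\psi_i(p)\,\mathrm{d}\varrho_i|_p$, because $\varrho_i(p)=0$ whenever $\psi_i(p)\neq0$. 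Each $\mathrm{d}\varrho_i|_p$ is nonzero and annihilates $T_p\partial M$, and these differentials are positive on inward-pointing vectors. So for an inward vector $v$ we get $\mathrm{d}\varrho|_p(v)=\sum_i\psi_i(p)\,\mathrm{d}\varrho_i|_p(v)>0$, since the weights are nonnegative and sum to $1$.

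The main obstacle is the first step: producing analytic boundary-adapted charts in which $\partial M$ is $\{y_n=0\}$. I would first try to obtain them from the analytic structure on the double $2M$ by assuming, or arranging via Whitney/Grauert, that $\partial M$ is an analytic submanifold of $2M$. Then locally $\partial M=\{f=0\}$ with $f$ analytic and $\mathrm{d}f\neq0$, and $f$ can serve directly as $\varrho_i$. This removes the need for any coordinate straightening: only the sign of $f$, positive on the $M$ side, is required.
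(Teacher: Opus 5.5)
Your proposal is correct and follows the paper's proof. Like the paper, you glue $\varrho_i=y_n\circ\phi_i$ on boundary charts and $\varrho_i\equiv 1$ on interior charts with a Gevrey partition of unity, and you get nondegeneracy of $\mathrm{d}\varrho$ on $\partial M$ from the inward-pointing-vector argument the paper cites from Lee; the ``main obstacle'' you flag is already covered by the hypothesis, since an analytic atlas on a manifold with boundary has boundary charts valued in the half-space sending $\partial M$ to $\{y_n=0\}$, which is exactly what the paper uses.
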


	\begin{proof}
		In an interior chart $(U_i,\phi_i)$, we simply set $\varrho_i \equiv 1$. In a boundary chart $(U_i,\phi_i)$, with $\phi_i(U_i) \subset \mathbb{R}^{n-1} \times [0,+\infty)$, we define $\varrho_i(x_1,\dots,x_n) = x_n$. Then $\varrho_i = 0$ on $\partial M$, is positive away from $\partial M$,
		and $\displaystyle d\varrho_i|_{x_n=0}\not=0$. 
		Let $\{\psi_i\}_{i=1}^N$ be a Gevrey partition of unity of order $s$, of Roumieu or Beurling type, subordinated to the cover $\{U_i\}_{i=1}^N$. Since each $\varrho_i$ is analytic on its domain,
		\[
		\varrho(t) = \sum_{i=1}^N \psi_i(t)(\varrho_i \circ \phi_i)(t)
		\]
		is Gevrey of order $s$. Moreover, $\mathrm{d}\varrho$ does not vanish at any point $y \in \partial M$, as shown in the proof of \cite[Proposition 5.41]{Lee_smooth}.
	\end{proof}

	\begin{theorem}
		Let $\overline{M}$ be an $n$-dimensional smooth compact manifold with boundary, and let $s>1$. Then the interior $M$ admits a scattering metric whose coefficients are Gevrey of order $s$. We refer to such a metric as an $s$-scattering metric (in the Roumieu case) or an $(s)$-scattering metric (in the Beurling case).
	\end{theorem}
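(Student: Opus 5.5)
We will construct the metric by patching, using Proposition~\ref{bdf_gev} and Gevrey partitions of unity. Fix a finite analytic atlas $\mathscr{A}=\{(U_i,\phi_i)\}_{i=1}^N$ on $\overline{M}$, induced from the double $2M$, and let $\varrho$ be the Gevrey boundary defining function given by Proposition~\ref{bdf_gev}. Since $\mathrm{d}\varrho\neq 0$ on $\partial M$ and $\partial M$ is compact, there is $\varepsilon>0$ such that $\mathrm{d}\varrho\neq0$ on $\{\varrho\le 2\varepsilon\}$.

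\textbf{Step 1: a Gevrey metric on $\overline{M}$.} In each chart $U_i$, take the Euclidean metric $h_i=\sum_j (\mathrm{d}x_j)^2$ in the analytic coordinates $\phi_i$. Let $\{\psi_i\}$ be a Gevrey partition of unity subordinate to $\{U_i\}$, and set $h=\sum_i\psi_i\,\phi_i^*h_i$. This $h$ is a Riemannian metric on $\overline{M}$, since it is a convex combination of positive definite forms. Its coefficients are Gevrey of order $s$ in every analytic chart: the transition maps are analytic, so $\phi_i^*h_i$ has analytic coefficients in any other analytic chart, and Gevrey functions form an algebra stable under analytic changes of variables (the three propositions of Section~\ref{sec-GSmanif}).

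\textbf{Step 2: the collar part.} Near $\partial M$, let $g'$ be the restriction of $h$ to the $h$-orthogonal complement of $\mathrm{d}\varrho$, that is,
\[
g' := h - \frac{\mathrm{d}\varrho^2}{|\mathrm{d}\varrho|_h^2}.
\]
Here $|\mathrm{d}\varrho|_h^2=h^{-1}(\mathrm{d}\varrho,\mathrm{d}\varrho)$ is Gevrey and nonvanishing near $\partial M$. The function $1/|\mathrm{d}\varrho|_h^2$ is Gevrey because $1/f$ is Gevrey when $f$ is Gevrey and nonvanishing, which follows for instance by composing with the analytic map $y\mapsto1/y$. Hence $g'$ is a Gevrey symmetric $2$-cotensor, and it is positive definite on $\ker \mathrm{d}\varrho$, in particular on $T\partial M$. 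Then set
\[
g_0=\frac{\mathrm{d}\varrho^2}{\varrho^4}+\frac{g'}{\varrho^2}\quad\text{on }\{0<\varrho<2\varepsilon\}.
\]
This form is positive definite: writing $h=|\mathrm{d}\varrho|_h^{-2}\mathrm{d}\varrho^2+g'$ with $g'\ge 0$ vanishing exactly on the $\mathrm{d}\varrho$-direction complement, the form $g_0$ is positive on both pieces.

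\textbf{Step 3: gluing.} Choose a Gevrey cutoff $\chi(r)$ of one real variable (available since $s>1$) with $\chi=1$ for $r\le\varepsilon$ and $\chi=0$ for $r\ge 2\varepsilon$. Define
\[
g=\chi(\varrho)\,g_0+(1-\chi(\varrho))\,h.
\]
Then $\chi\circ\varrho$ is Gevrey by the composition result, and $\varrho^{-1},\varrho^{-2},\varrho^{-4}$ are Gevrey on the interior $M$ because $\varrho>0$ there. Thus $g$ is a Gevrey Riemannian metric on $M$ (a convex combination of positive forms). On the collar $\{\varrho<\varepsilon\}$ it has exactly the scattering form, with $g'$ restricting to a metric on $\partial M$; the collar itself can be identified with $[0,\varepsilon)\times\partial M$ via the gradient flow of $\varrho$.

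\textbf{Main obstacle.} The delicate point is making sense of ``smooth $g'$'' with respect to the collar decomposition. The definition asks $g'$ to be a smooth symmetric cotensor on $\overline{M}$ restricting to a metric on $\partial M$. Our $g'$ is defined up to the boundary and is Gevrey, hence smooth up to the boundary: its defining formula involves no negative powers of $\varrho$, and the Gevrey estimates hold on compacts of $\overline{M}$ in the boundary charts. I would verify this carefully, and also check that Gevrey regularity up to the boundary in half-space charts is preserved by the operations used. This works because these are restrictions of the corresponding objects on the double $2M$, and we can perform the construction there.
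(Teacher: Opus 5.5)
Your construction is correct, but it takes a different route from the paper in how the boundary tensor $g'$ is produced and carried into the interior.

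\textbf{The paper's route.} The paper first glues analytic metrics on the pieces $\partial U_i$ into a Gevrey metric $g'$ on $\partial M$. It then sets $g=\mathrm{d}\varrho^2/\varrho^4+g'/\varrho^2$ on a collar $C\simeq[0,1)\times\partial M$ and glues this with Euclidean metrics on interior charts by a Gevrey partition of unity. Two points are left implicit. One is how $g'$ is extended off $\partial M$ with Gevrey coefficients, which requires the collar identification to be compatible with the Gevrey structure. The other is how the final gluing preserves the exact scattering form near $\partial M$.

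\textbf{Your route.} You build a single global Gevrey metric $h$ and define $g'=h-\mathrm{d}\varrho^2/|\mathrm{d}\varrho|_h^2$ intrinsically on $\{\varrho<2\varepsilon\}$.
\begin{itemize}
\item Its Gevrey regularity is inherited directly from $h$ and $\varrho$, so no Gevrey collar map is ever needed. The gradient flow serves only to show that $\{\varrho<\varepsilon\}$ is a collar, and for that smoothness suffices.
\item Since $g'(\nabla_h\varrho,\cdot)=0$, $g'$ has no $\mathrm{d}\varrho$ cross terms in that collar, so you also recover the product form.
\item The cutoff $\chi(\varrho)$ makes explicit that $g$ is exactly of scattering type on $\{\varrho<\varepsilon\}$. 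You should take $0\le\chi\le1$ so the convexity argument applies.
\end{itemize}
Your approach trades the paper's brevity for an argument in which every Gevrey claim is checkable.

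\textbf{Two small remarks.}

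First, $\chi\circ\varrho$, $1/|\mathrm{d}\varrho|_h^2$ and $\varrho^{-k}$ are all compositions with a Gevrey, not analytic, inner function. The composition proposition of Section~\ref{sec-GSmanif} treats only analytic inner maps, so it does not cover them. You need the classical closedness of $\mathscr{G}^{s}$ under composition (see \cite{Rod_Gevrey}). In the Beurling case you can avoid a Beurling composition theorem altogether: run the whole construction in $\mathscr{G}^{s_0}$ with $1<s_0<s$, applying Proposition~\ref{bdf_gev} with $s_0$, and then use $\mathscr{G}^{s_0}(M)\subset\mathscr{G}^{(s)}(M)$.

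Second, your ``main obstacle'' is milder than you suggest. The definition only requires $g'$ to be smooth up to $\partial M$, which is clear because $h$ and $\varrho$ are. The Gevrey requirement concerns analytic charts of the open manifold $M$ only, so no Gevrey estimates up to the boundary are needed.
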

	
	\begin{proof}
		Let $\{(U_i,\phi_i)\}_{i=1}^N$ be a finite analytic atlas on $M$, and let $\varrho$ be the boundary defining function constructed in Proposition \ref{bdf_gev}. On each boundary chart $(U_i,\phi_i)$, the boundary
		\[
		\partial U_i = \{(x_1,\dots,x_n) \in U_i \,:\, x_n = 0\}
		\]
		is an analytically embedded submanifold, considering the usual analytic structure of $\mathbb{R}^n$. Hence, we can endow $\partial U_i$ with an analytic Riemannian metric $g_i'$, which may be regarded simply as the restriction to $\partial U_i$ of an analytic metric on $U_i$, which can be chosen to be the pullback of the Euclidean metric by $\phi_i$.
		
		Now, for each boundary chart $(U_i,\phi_i)$ on $M$, let $(V_i,\varphi_i)$ be the corresponding analytic chart on $2M$ that restricts to $(U_i,\phi_i)$. With respect to this cover of $2M$, let $\{\psi_i\}_{i=1}^N$ be a Gevrey partition of unity. By restricting this partition to $\partial M$ and using it to glue the metrics $g_i'$, we obtain a Gevrey metric $g'$ of order $s$ on $\partial M$.
		
		Finally, using the Gevrey boundary defining function $\varrho$, consider a collar neighbourhood $C \simeq [0,1) \times \partial M$ of the boundary. On the interior of $C$, we define the metric
		\[
		g = \dfrac{\mathrm{d}\varrho^2}{\varrho^4} + \dfrac{g'}{\varrho^2},
		\]
		which clearly has Gevrey coefficients. We also endow each interior chart with the standard analytic metric of $\mathbb{R}^n$. Finally, by using once again a Gevrey partition of unity, we obtain a globally defined scattering metric $g$ whose coefficients are Gevrey in every analytic chart.
	\end{proof}

	\begin{proposition}\label{gh_lapl}
		Fix $s>1$ and let $g$ be a ${[s]}$-scattering metric on the interior $M$ of a compact manifold with boundary $\overline{M}$. Then, the Laplace--Beltrami operator $\Delta_g$ is ${[s]}$-globally hypoelliptic on $M$.
	\end{proposition}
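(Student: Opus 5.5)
The plan is to reduce the statement to local elliptic regularity in Gevrey classes, using how $\mathscr{G}^{[s]}(M)$ is topologized in Section~\ref{sec-GSmanif}. By definition, $\mathscr{G}^{[s]}(M)$ is the projective limit of $\mathscr{G}^{[s]}(K)$ over $K\Subset M$. It is described entirely by the seminorms $\|\cdot\|_{K,h,M}$ on compact subsets of the open manifold $M$, and no uniformity or decay towards $\partial M$ is built into the space. So ``$\Delta_g$ is $[s]$-globally hypoelliptic on $M$'' means exactly this:
\[
u\in\mathscr{D}'(M),\quad \Delta_g u\in\mathscr{G}^{[s]}(M)\ \Longrightarrow\ u\in\mathscr{G}^{[s]}(M).
\]
The same statement, with the corresponding bundle spaces, is meant for $\Delta_g$ acting on $1$-forms, which is the case needed later for harmonic forms. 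Membership in $\mathscr{G}^{[s]}(M)$ is checked chart by chart in the analytic atlas, and by the composition result for real-analytic maps it does not depend on the analytic chart. It therefore suffices to show that $u\circ\chi_i^{-1}$ (or each component of it) is $\mathscr{G}^{[s]}$ on every compact subset of $\chi_i(\Omega_i)$. The scattering structure at $\partial M$ plays no role in this statement. It only matters later, through the Hodge theorem, and what is used here is the fact, from the preceding theorem, that the coefficients of $g$ are Gevrey in every analytic chart, all the way to infinity.

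\textbf{Step 1 (local form of the operator).} In an analytic chart, $\Delta_g=-g^{jk}\partial_j\partial_k+b^j\partial_j$ on functions. The coefficients $g^{jk}$, $\sqrt{\det g}$ and $b^j$ are built from the entries $g_{jk}\in\mathscr{G}^{[s]}$ by algebra operations, differentiation and the analytic maps $A\mapsto A^{-1}$ and $t\mapsto\sqrt t$ on positive matrices and numbers. By the algebra and stability propositions and the composition proposition, they lie in $\mathscr{G}^{[s]}$. On $1$-forms, the Hodge Laplacian $\mathrm{d}\delta+\delta\mathrm{d}$ is a system with principal symbol $|\xi|^2_g\,\mathrm{Id}$ and $\mathscr{G}^{[s]}$ lower-order coefficients. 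In both cases $\Delta_g$ is a second-order elliptic operator (a diagonal-principal elliptic system for forms) with $\mathscr{G}^{[s]}$ coefficients on $\chi_i(\Omega_i)$.

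\textbf{Step 2 (a priori smoothness).} Standard $C^\infty$ elliptic regularity gives $u\in C^\infty(M)$, since Gevrey coefficients are smooth.

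\textbf{Step 3 (Gevrey estimates on a compact set).} Fix $K\Subset\chi_i(\Omega_i)$ and a slightly larger compact $K'$. I would run the classical nested-neighbourhood argument of Rodino (Theorem on Gevrey regularity of elliptic operators with $\mathscr{G}^s$ coefficients, \cite{Rod_Gevrey}). One uses cutoffs $\phi_N$ supported in shrinking neighbourhoods of $K$ with $|\partial^\beta\phi_N|\le C^{|\beta|+1}N^{|\beta|}$ for $|\beta|\le2$. Applying the $L^2$ elliptic estimate to $\phi_N\partial^\alpha u$ and commuting $\partial^\alpha$ past the coefficients by Leibniz, induction on $|\alpha|=N$ gives
\[
\|\partial^\alpha u\|_{L^2(K)}\le C\,H^{|\alpha|}\alpha!^s .
\]
Here $H$ depends linearly on the Gevrey constants $h$ of the coefficients of $\Delta_g$ on $K'$ and of $f=\Delta_g u$ on $K'$, up to a fixed geometric factor. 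Sobolev embedding then converts these $L^2$ bounds to sup bounds. This settles the Roumieu case.

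\textbf{Step 4 (Beurling case).} This is the step I expect to be the main obstacle: the induction in Step 3 must be redone while tracking how $H$ depends on $h$. The concern is that the dependence on the cutoff geometry could contribute a factor $N^{|\beta|}$ that does not shrink. My plan is to compare with the $(s_0)$-type factorials. At order $N$ the cutoff loss is $C^N N^N\sim C^N e^N N!$, which is absorbed by $\alpha!^s$ with exponent $s>1$, leaving a factor $N!^{\,1-s}$ in the remainder. That factor beats any fixed $h^{-N}$. So for every $h>0$ one obtains $C_h h^{|\alpha|}\alpha!^s$, provided the coefficients and $f$ satisfy bounds with every $h$. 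If this bookkeeping proves delicate, the fallback is that Beurling elliptic regularity for operators with $\mathscr{G}^{(s)}$ coefficients is also classical (Komatsu's ultradifferentiable framework \cite{Komatsu1967}) and can be quoted.

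Combining Steps 1–4 over all charts and compact sets gives $u\in\mathscr{G}^{[s]}(M)$, which is the claimed $[s]$-global hypoellipticity of $\Delta_g$ on $M$.
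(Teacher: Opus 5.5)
Your proof is correct in outline, and it is organized differently from the paper's.

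\textbf{The paper's route.} It quotes local Gevrey hypoellipticity of elliptic operators with Gevrey coefficients: \cite[Theorem~3.3.8]{Rod_Gevrey} for Roumieu and \cite[Proposition~2.12]{FGJ2005} for Beurling, both parametrix-based. It then uses a Gevrey partition of unity to assemble a global parametrix in $\Psi(M)$.

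\textbf{Your route.} You note that $\mathscr{G}^{[s]}(M)$ is a projective limit over compact sets with no condition at infinity. So $[s]$-global hypoellipticity on $M$ is literally local $[s]$-hypoellipticity in each analytic chart, which makes the global parametrix superfluous; this is the cleaner reduction. For the local step you replace the citations by the energy (nested-domain) induction. That is longer, but it is self-contained. It also treats the Hodge Laplacian on $1$-forms explicitly, which is the case used later for harmonic forms and which the paper's statement for $\Delta_g$ leaves implicit. And it shows exactly where $s>1$ is needed in the Beurling case. The paper's route buys brevity by outsourcing all of this.

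\textbf{Points to tighten.} First, the paper's composition proposition handles $f\circ\chi$ with $\chi$ analytic and $f$ Gevrey. Step~1 needs the reverse, $F\circ g$ with $F$ analytic and $g$ Gevrey, for $1/\det g$ and $\sqrt{\det g}$. This is classical, and you can also bypass it by multiplying the equation by $(\det g)^2$, which makes every coefficient polynomial in the $g_{jk}$ and their derivatives.

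Second, in Step~3 the claim that $H$ depends only linearly on $h$ is not automatic from the classical induction. There $H$ also grows with the prefactor constants in the coefficient bounds, and in the Beurling case these blow up as $h\to0$. So Step~4 is about more than the cutoff loss. What makes it work is the following organization. For fixed $N$, run the induction over $j\le N$ on $N$ nested sets at spacing $\sim R/N$, with target $A H^j N^{sj}$. A single induction in $N$ with sets shrinking by $\sim 1/N$ per step would exhaust the margin, since $\sum 1/N=\infty$. With this setup:
\begin{itemize}
\item the cutoff terms carry a relative factor $\lesssim N^{1-s}/(RH)$;
\item the commutator terms in which $\partial^\beta$, $|\beta|=k$, falls on a coefficient carry a factor $\lesssim C_{h'}(h'/H)^k(k/N)^{k(s-1)}$.
\end{itemize}
Hence for $N\ge N_0(h)$ the induction closes with $H\sim h$. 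The finitely many $N<N_0$ are absorbed into $A=C_h$ using the a priori smoothness, and $N^{sN}\le e^{sN}N!^s$ finishes.

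Third, your fallback citation is misplaced. In this paper \cite{Komatsu1967} is the reference for the FS structure of projective limits, not for elliptic regularity. The Beurling result the paper relies on is \cite[Proposition~2.12]{FGJ2005}.
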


	\begin{proof}
		Let $\Delta_g$ denote the Laplace--Beltrami operator associated with the metric $g$. Since $g$ is a Riemannian metric on the open manifold $M$, $\Delta_g$ is a second-order elliptic differential operator on $M$. In any analytic local chart, its coefficients are Gevrey functions of order $s$ (of Roumieu or Beurling type, depending on the metric).
		
		In the Roumieu case, it follows from \cite[Theorem~3.3.8]{Rod_Gevrey} that elliptic differential operators with $s$-Gevrey coefficients are locally $s$-hypoelliptic. Similarly, in the Beurling case, \cite[Proposition~2.12]{FGJ2005} establishes local $(s)$-hypoellipticity for elliptic operators with $(s)$-Gevrey coefficients. These results are obtained on $\mathbb{R}^n$ by constructing a parametrix. Using a Gevrey partition of unity, one can use the local results to obtain a (Gevrey) global parametrix in the algebra $\Psi(M)=\bigcup_{\mu\in\mathbb{R}}\Psi^\mu(M)$ of pseudo-differential operators on $M$, which gives us the ${[s]}$-global hypoellipticity of $\Delta_g$.
	\end{proof}

	\begin{proposition}\label{gev_gh}
		Fix $s>1$. Let $M$ be the interior of a compact manifold with boundary, endowed with a ${[s]}$-scattering metric $g$. Then the exterior derivative $\mathrm{d}:\mathscr{G}^{[s]}(M)\to\mathsf{\Lambda}^1\mathscr{G}^{[s]}(M)$ is ${[s]}$-globally hypoelliptic. Furthermore, for any $\xi \in \mathbb{Z}^m$, the operator $\mathbb{L}_\xi:\mathscr{G}^{[s]}(M)\to\mathsf{\Lambda}^1\mathscr{G}^{[s]}(M)$ defined by
		\[
		\mathbb{L}_\xi u = \mathrm{d}u + i(\xi\cdot\boldsymbol{\omega})\wedge u
		\]
		is also ${[s]}$-globally hypoelliptic.
	\end{proposition}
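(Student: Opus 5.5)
The plan is to reduce both assertions to the $[s]$-global hypoellipticity of the Laplace--Beltrami operator established in Proposition~\ref{gh_lapl}, or more precisely of its analogue acting on forms. Global hypoellipticity on the open manifold $M$ is a local property: $u\in\mathscr{G}^{[s]}(M)$ if and only if $u\circ\chi_i^{-1}\in\mathscr{G}^{[s]}(\chi_i(\Omega_i))$ for every analytic chart. It therefore suffices to show that a distribution $u$ with $\mathrm{d}u$ (respectively $\mathbb{L}_\xi u$) in $\mathsf{\Lambda}^1\mathscr{G}^{[s]}(M)$ is $[s]$-Gevrey near each point. No behaviour at $\partial M$ is involved.

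\textbf{The operator $\mathrm{d}$.} Let $u\in\mathscr{D}'(M)$ with $f=\mathrm{d}u\in\mathsf{\Lambda}^1\mathscr{G}^{[s]}(M)$. Then
\[
\Delta_g u=\mathrm{d}^*\mathrm{d}u=\mathrm{d}^* f,
\]
where $\mathrm{d}^*$ is the formal adjoint with respect to $g$. In an analytic chart, $\mathrm{d}^*f=-|g|^{-1/2}\partial_j\big(|g|^{1/2}g^{jk}f_k\big)$. The functions $g^{jk}$ and $|g|^{\pm1/2}$ are $[s]$-Gevrey: they are obtained from the Gevrey coefficients of $g$ by composition with analytic maps, namely inversion and the square root of a positive function. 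This uses the composition result for real-analytic $\chi$ proved above, in its slightly more general form $F\circ(g_{jk})$ with $F$ analytic, whose proof is the same Faà di Bruno argument. Since $\mathscr{G}^{[s]}$ is an algebra stable under differentiation, $\mathrm{d}^*f\in\mathscr{G}^{[s]}(M)$. Proposition~\ref{gh_lapl} then gives $u\in\mathscr{G}^{[s]}(M)$.

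\textbf{The operator $\mathbb{L}_\xi$.} Write $\theta=\xi\cdot\boldsymbol{\omega}$. This is a real closed $1$-form in $\mathsf{\Lambda}^1\mathscr{G}^{[s]}(M)$, as is implicit in the standing assumption on $\boldsymbol{\omega}$. Set $P=\mathbb{L}_\xi^*\mathbb{L}_\xi$, where $\mathbb{L}_\xi^*v=\mathrm{d}^*v-i\,\iota_{\theta^\sharp}v$. Then
\[
P=\Delta_g+(\text{first-order terms})+|\theta|_g^2 .
\]
The operator $P$ has the same principal symbol $|\eta|_g^2$ as $\Delta_g$, so it is elliptic. All of its coefficients are $[s]$-Gevrey, being built from $g$, $g^{-1}$, $|g|^{1/2}$ and the coefficients of $\theta$ by products and derivatives. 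If $\mathbb{L}_\xi u=f\in\mathsf{\Lambda}^1\mathscr{G}^{[s]}(M)$, then $Pu=\mathbb{L}_\xi^*f\in\mathscr{G}^{[s]}(M)$ by the same algebra argument as before.

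The local Gevrey hypoellipticity results quoted in the proof of Proposition~\ref{gh_lapl} apply to any elliptic operator with $[s]$-Gevrey coefficients. These are \cite[Theorem~3.3.8]{Rod_Gevrey} in the Roumieu case and \cite[Proposition~2.12]{FGJ2005} in the Beurling case. Applying them chart by chart yields $u\in\mathscr{G}^{[s]}(M)$.

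\textbf{An alternative route and the main obstacle.} Locally, on a simply connected chart domain $U$, one could write $\theta=\mathrm{d}\phi$ with $\phi$ Gevrey. Then $\mathbb{L}_\xi u=e^{-i\phi}\,\mathrm{d}(e^{i\phi}u)$ on $U$, which reduces the second statement to the first locally. This avoids forming $P$, but it needs the primitive $\phi$ to be Gevrey, which holds by the Poincaré lemma formula $\phi(x)=\int_0^1\theta_x(tx)\cdot x\,\mathrm{d}t$. I expect the only delicate point in either route to be the bookkeeping that the coefficients of the auxiliary elliptic operator remain in the same Roumieu or Beurling class. In particular, the Beurling case requires the composition estimate with analytic outer functions, which is not stated verbatim above but follows from the same proof.
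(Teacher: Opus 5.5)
Your proposal is correct and follows essentially the paper's route. The paper also passes to the associated Laplacians $\mathrm{d}^*\mathrm{d}$ and $\mathbb{L}_\xi^*\mathbb{L}_\xi$, observes that they are elliptic with $[s]$-Gevrey coefficients in every analytic chart, and invokes local Gevrey elliptic regularity, following \cite[Corollary~2.4]{CKMT}.

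You are more explicit than the paper about the coefficient bookkeeping. Your remark that the statement is purely local is also correct: the scattering structure plays no role here, and the conjugation by $e^{i\phi}$ on simply connected chart domains gives an even more elementary argument.
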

	\begin{proof}
		The proof follows the same ideas as in \cite[Corollary~2.4]{CKMT}, observing that the corresponding Laplacians are elliptic and have Gevrey coefficients in every analytic chart of $M$.
	\end{proof}

	\section{Matrices of cycles}\label{sec-Matrix-cycles}
	
	Let $M$ be a smooth paracompact manifold endowed with an analytic atlas, as discussed in the previous section.
	
	\begin{definition}
		A smooth real-valued $1$-form $\omega$ on $M$ is said to be \emph{integral} if
		\[
		\frac{1}{2\pi}\int_\sigma \omega \in \mathbb{Z}
		\]
		for every smooth $1$-cycle $\sigma$ on $M$.
	\end{definition}
	
	\begin{definition}
		Fix $s>1$ and let $\boldsymbol{\omega}=(\omega_1,\dots,\omega_m)$ be a family of real-valued closed $1$-forms in $\mathsf{\Lambda}^1\mathscr{G}^{[s]}(M)$. We say that $\boldsymbol{\omega}$ is:
		\begin{enumerate}
			\item \emph{rational} if there exists $\xi\in\mathbb{Z}^m\setminus\{0\}$ such that
			\[
			\xi\cdot\boldsymbol{\omega} = \sum_{k=1}^{m}\xi_k\omega_k
			\]
			is an integral $1$-form;
			
			\item \emph{$s$-exponential Liouville} if $\boldsymbol{\omega}$ is not rational and there exist $\varepsilon>0$, a sequence of integral $1$-forms $\{\theta_j\}_{j\in\mathbb{N}}$ in $\mathsf{\Lambda}^1\mathscr{G}^s(M)$, and a sequence $\{\xi^{(j)}\}_{j\in\mathbb{N}}$ in $\mathbb{Z}^m\setminus\{0\}$ with $|\xi^{(j)}|\to\infty$, such that
			\[
			\big\{e^{\varepsilon|\xi^{(j)}|^{1/s}}(\xi^{(j)}\cdot\boldsymbol{\omega}-\theta_j)\big\}_{j\in\mathbb{N}}
			\]
			is bounded in $\mathsf{\Lambda}^1\mathscr{G}^s(M)$;
			
			\item \emph{$(s)$-exponential Liouville} if $\boldsymbol{\omega}$ is not rational and there exist a sequence of integral $1$-forms $\{\theta_j\}_{j\in\mathbb{N}}$ in $\mathsf{\Lambda}^1\mathscr{G}^{(s)}(M)$, and a sequence $\{\xi^{(j)}\}_{j\in\mathbb{N}}$ in $\mathbb{Z}^m\setminus\{0\}$ with $|\xi^{(j)}|\to\infty$, such that, for every $\varepsilon>0$,
			\[
			\big\{e^{\varepsilon|\xi^{(j)}|^{1/s}}(\xi^{(j)}\cdot\boldsymbol{\omega}-\theta_j)\big\}_{j\in\mathbb{N}}
			\]
			is bounded in $\mathsf{\Lambda}^1\mathscr{G}^{(s)}(M)$.
		\end{enumerate}
	\end{definition}
	
	Now let $\overline{M}$ be a compact manifold with boundary endowed with a finite analytic atlas, and let $M$ be its interior equipped with a ${[s]}$-scattering metric $g$.
	
	Let $H^1_{\mathrm{dR}}(\overline{M},\partial M)$ denote the first relative de Rham cohomology space, which is naturally isomorphic to the compactly supported cohomology $H^1_c(M)$. Consider the natural homomorphism
	\[
	\iota^*: H^1_{\mathrm{dR}}(\overline{M},\partial M)\to H^1_{\mathrm{dR}}(\overline{M}),
	\]
	and denote its image by $H^1_{\partial M}(M)$.
	
	Consider the space of square-integrable harmonic $1$-forms on $M$,
	\[
	\mathcal{H}^1(M) = \left\{ \omega \in \mathsf{\Lambda}^1 L^2(M) : \Delta \omega = 0 \right\},
	\]
	where $\Delta$ denotes the Gevrey scattering Laplacian constructed in Section~\ref{gev_sc_met}. By Proposition~\ref{gh_lapl}, the operator $\Delta$ is ${[s]}$-globally hypoelliptic. Hence, every harmonic form in $\mathcal{H}^1(M)$ belongs in fact to $\mathsf{\Lambda}^1 \mathscr{G}^{[s]}(M)$.
	
	The Hodge theorem for scattering metrics (see \cite[Theorem~6.2]{Melrose_GST}) establishes a canonical isomorphism
	\[
	\mathcal{H}^1(M) 	\;\simeq\; 	H^1_{\partial M}(M).
	\]
	
	It is well known that the inclusion $M \hookrightarrow \overline{M}$ induces an isomorphism
	\[
	H^1_{\mathrm{dR}}(\overline{M}) \;\simeq\;  H^1_{\mathrm{dR}}(M).
	\]
	
	Consequently, we may regard $H^1_{\partial M}(M)$ as a subspace of $H^1_{\mathrm{dR}}(M)$.
	
	Since $M$ is the interior of a compact manifold with boundary, the de Rham cohomology space $H^1_{\mathrm{dR}}(M)$ is finite-dimensional (see \cite[Remark 2.8]{CKMT}). Let $d' = \dim H^1_{\mathrm{dR}}(M)$ and $d=\dim H^1_{\partial M}(M)$.
	
	Let $\{[\omega_1],\dots,[\omega_d]\}$ be a basis of the subspace $H^1_{\partial M}(M)$, and complete it to a basis
	\[
	\mathcal{B}' = \{[\omega_1],\dots,[\omega_d],[\omega_{d+1}],\dots,[\omega_{d'}]\}
	\]
	of the full space $H^1_{\mathrm{dR}}(M)$.
	
	By the de Rham Theorem, there exists a family of $1$-cycles $\{\sigma_1,\dots,\sigma_{d'}\}	\subset M$ whose homology classes form a basis of $H_1(M;\mathbb{R})$ dual to $\mathcal{B}'$ with respect to the natural pairing 
	\[
	H^1_{\mathrm{dR}}(M)\times H_1(M;\mathbb{R}) \longrightarrow \mathbb{R}, \qquad ([\omega],[\sigma]) \longmapsto \int_\sigma \omega.
	\]
	
	We denote by $\mathsf{\Lambda}^1\mathscr{G}^{[s]}_{\partial M}(M)$ the space of real-valued closed $1$-forms
	$\omega \in \mathsf{\Lambda}^1\mathscr{G}^{[s]}(M)$ whose cohomology class lies in
	$H^1_{\partial M}(M)$.
	
	By construction of the dual basis, if $\omega \in \mathsf{\Lambda}^1\mathscr{G}^{[s]}_{\partial M}(M)$, then
	\[
	\int_{\sigma_\ell} \omega = 0, \qquad d < \ell \le d'.
	\]
	
	Thus, the rationality of such a form depends only on its periods along 	$\sigma_1, \dots, \sigma_d$.
	
	Given a family $\boldsymbol{\omega}=(\omega_1,\dots,\omega_m)$ of $1$-forms in 
	$\mathsf{\Lambda}^1\mathscr{G}^{[s]}_{\partial M}(M)$, we associate to $\boldsymbol{\omega}$
	its matrix of cycles $A(\boldsymbol{\omega}) \in M_{d\times m}(\mathbb{R})$ defined by
	\[
	A(\boldsymbol{\omega})_{\ell k} = \frac{1}{2\pi}\int_{\sigma_\ell}\omega_k, \qquad 	\ell=1,\dots,d,\ k=1,\dots,m.
	\]
	Equivalently, for every $\xi\in\mathbb{Z}^m$,
	\[
	A(\boldsymbol{\omega})\xi = \frac{1}{2\pi} 
	\left( \int_{\sigma_1}\xi\cdot\boldsymbol{\omega}, \dots, 	\int_{\sigma_d} \xi \cdot \boldsymbol{\omega}  \right)^T.
	\]
	
	Since the integrals depend only on the cohomology classes $[\omega_k]$,
	the matrix $A(\boldsymbol{\omega})$ depends only on the $m$-tuple
	$([\omega_1],\dots,[\omega_m]) \in (H^1_{\partial M}(M))^m$.
	Thus, the construction defines a linear map
	\[
	A : (H^1_{\partial M}(M))^m \longrightarrow M_{d\times m}(\mathbb{R}).
	\]
	
	\begin{definition}
		Let $s>1$. A matrix $\boldsymbol{A}\in M_{d\times m}(\mathbb{R})$
		is said to satisfy:
		\begin{enumerate}
			\item the condition $(\mathrm{DC}_s)$ if, for every $\varepsilon>0$,
			there exists a constant $C_\varepsilon>0$ such that
			\begin{equation}\label{DCs}
				|\eta+\boldsymbol{A}\xi|
				\geq
				C_\varepsilon
				e^{-\varepsilon (|\eta|+|\xi|)^{1/s}},
				\tag{DC$_s$}
			\end{equation}
			for all $(\xi,\eta)\in
			\mathbb{Z}^m\times\mathbb{Z}^d\setminus\{(0,0)\}$;
			
			\item the condition $(\mathrm{DC}_{(s)})$ if there exist constants
			$\varepsilon>0$ and $C>0$ such that
			\begin{equation}\label{DCss}
				|\eta+\boldsymbol{A}\xi|
				\geq
				C
				e^{-\varepsilon (|\eta|+|\xi|)^{1/s}},
				\tag{DC$_{(s)}$}
			\end{equation}
			for all $(\xi,\eta)\in
			\mathbb{Z}^m\times\mathbb{Z}^d\setminus\{(0,0)\}$.
		\end{enumerate}
	\end{definition}

	The following lemma relates the Diophantine condition $(\mathrm{DC}_s)$ to lower bounds on exponential sums on the torus.
	
	\begin{lemma}\label{est_DCs}
		Suppose that $\boldsymbol{A}\in M_{d\times m}(\mathbb{R})$ satisfies condition \eqref{DCs}. Then, for every $\varepsilon>0$, there exists a constant $C_\varepsilon>0$ such that
		\[
		\max_{1\leq \ell \leq d}
		\big|e^{2\pi i a_\ell\cdot \xi}-1\big|
		\geq
		C_\varepsilon
		e^{-\varepsilon |\xi|^{1/s}},
		\]
		for all $\xi\in\mathbb{Z}^m\setminus\{0\}$,
		where $a_\ell$ denotes the $\ell$-th row of $\boldsymbol{A}$.
	\end{lemma}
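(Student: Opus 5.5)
The plan is to reduce the exponential-sum lower bound to the Diophantine inequality \eqref{DCs} by choosing, for each $\xi$, the nearest integer vector $\eta$. Fix $\xi\in\mathbb{Z}^m\setminus\{0\}$. For each $\ell=1,\dots,d$, let $\eta_\ell\in\mathbb{Z}$ be an integer nearest to $-a_\ell\cdot\xi$, so that $|\eta_\ell+a_\ell\cdot\xi|\le 1/2$, and set $\eta=(\eta_1,\dots,\eta_d)\in\mathbb{Z}^d$.

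The first step is the elementary bound $|e^{2\pi i x}-1| = 2|\sin(\pi x)| \ge 4\,\mathrm{dist}(x,\mathbb{Z})$, valid for all real $x$ by concavity of $\sin$ on $[0,\pi/2]$. Applied to $x=a_\ell\cdot\xi$, it gives
\[
\max_{\ell}|e^{2\pi i a_\ell\cdot\xi}-1| \ge 4\max_\ell|\eta_\ell+a_\ell\cdot\xi| \ge \frac{4}{\sqrt d}\,|\eta+\boldsymbol{A}\xi|.
\]
Since $(\xi,\eta)\neq(0,0)$, condition \eqref{DCs} applies with any $\varepsilon'>0$:
\[
|\eta+\boldsymbol{A}\xi|\ge C_{\varepsilon'}e^{-\varepsilon'(|\eta|+|\xi|)^{1/s}}.
\]

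The remaining step, which is where some care is needed, is to control $|\eta|$ by $|\xi|$. Since $|\eta_\ell|\le |a_\ell||\xi|+1/2$, we have $|\eta|\le \|\boldsymbol{A}\||\xi|+\sqrt d/2$. Because $|\xi|\ge1$, this gives $|\eta|+|\xi|\le K|\xi|$ with $K=1+\|\boldsymbol{A}\|+\sqrt d$, a constant depending only on $\boldsymbol{A}$ and $d$. Hence
\[
(|\eta|+|\xi|)^{1/s}\le K^{1/s}|\xi|^{1/s}.
\]
Given $\varepsilon>0$, we choose $\varepsilon'=\varepsilon K^{-1/s}$. Combining the displays then yields the claim with constant $\frac{4}{\sqrt d}C_{\varepsilon'}$.

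If the norm $|\cdot|$ on $\mathbb{Z}^d$ is not the Euclidean one, only the dimensional constants change. No step is genuinely hard. The only points to watch are that $(\xi,\eta)\ne(0,0)$ is guaranteed by $\xi\ne0$, and that the rescaling of $\varepsilon$ must be uniform in $\xi$; the bound $|\eta|+|\xi|\le K|\xi|$ above ensures this.
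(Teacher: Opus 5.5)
Your proof is correct. The paper does not prove this lemma itself but defers to Lemma~8.1 of \cite{DM2020}. The only argument it writes out is for the Beurling analogue (Lemma~\ref{est_DCss}), and that argument is by contradiction. It assumes the lower bound fails along a sequence $\xi^{(j)}$, sets $\eta^{(j)}_\ell=-\lfloor a_\ell\cdot\xi^{(j)}\rfloor$, bounds $|e^{2\pi i x}-1|$ below by a multiple of $|x|$ for small $x$, and concludes that the Diophantine condition fails.

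Your argument uses the same two ingredients: an integer vector approximating $-\boldsymbol{A}\xi$, and a comparison of $|e^{2\pi i x}-1|$ with $\mathrm{dist}(x,\mathbb{Z})$. The difference is that you run it directly and quantitatively, which yields the explicit constant $\frac{4}{\sqrt d}\,C_{\varepsilon K^{-1/s}}$.

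Your version also makes explicit two points that the paper's written contradiction argument passes over.
\begin{itemize}
\item You choose the nearest integer rather than the integer part. With $\lfloor\cdot\rfloor$, if the fractional part of $a_\ell\cdot\xi^{(j)}$ tends to $1$, the inequality $\pi|a_\ell\cdot\xi^{(j)}+\eta^{(j)}_\ell|\le|e^{2\pi i a_\ell\cdot\xi^{(j)}}-1|$ is false.
\item You prove the bound $|\eta|+|\xi|\le K|\xi|$. This is exactly what is needed to pass from the exponent $(|\eta|+|\xi|)^{1/s}$ in $(\mathrm{DC}_s)$ to $|\xi|^{1/s}$, and the paper never bounds $|\eta^{(j)}|$.
\end{itemize}

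Running your argument with a fixed $\varepsilon$ and constant $C$ from $(\mathrm{DC}_{(s)})$ gives Lemma~\ref{est_DCss} immediately, with $\varepsilon K^{1/s}$ in place of $\varepsilon$.
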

	
	The proof follows from the argument in \cite[Lemma~8.1]{DM2020}. 
	Using a similar approach, we establish the Beurling version of this result.
		
	\begin{lemma}\label{est_DCss}
		Suppose that $\boldsymbol{A}\in M_{d\times m}(\mathbb{R})$ satisfies condition \eqref{DCss}. Then there exist constants $\varepsilon,C>0$ such that
		\begin{equation}\label{equiv_DCss}
			\max_{1\leq\ell\leq d}\left|e^{2\pi i a_\ell\cdot\xi}-1\right|\geq C e^{-\varepsilon|\xi|^{\frac{1}{s}}},
		\end{equation}
		for all $\xi\in\mathbb{Z}^m\setminus\{0\}$, where $a_\ell$ denotes the $\ell$-th row of $\boldsymbol{A}$.
	\end{lemma}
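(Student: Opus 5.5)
The plan is to reduce the exponential sum to a lattice distance, apply \eqref{DCss} to a suitably chosen integer vector $\eta$, and then convert the $|\eta|$ in the exponent into a multiple of $|\xi|$.

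\textbf{Step 1 (from exponentials to distances).} For real $x$ we have
\[
|e^{2\pi i x}-1| = 2|\sin(\pi x)| \geq 4\,\mathrm{dist}(x,\mathbb{Z}),
\]
because $|\sin(\pi y)|\geq 2|y|$ for $|y|\leq 1/2$. Fix $\xi\in\mathbb{Z}^m\setminus\{0\}$. For each $\ell$, choose $\eta_\ell\in\mathbb{Z}$ nearest to $-a_\ell\cdot\xi$, so that $|\eta_\ell+a_\ell\cdot\xi| = \mathrm{dist}(a_\ell\cdot\xi,\mathbb{Z})\leq 1/2$. Put $\eta=(\eta_1,\dots,\eta_d)\in\mathbb{Z}^d$. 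Using $\max_\ell|y_\ell| \geq d^{-1/2}|y|$, we get
\[
\max_{\ell}|e^{2\pi i a_\ell\cdot\xi}-1| \geq 4\max_\ell|\eta_\ell+a_\ell\cdot\xi| \geq 4d^{-1/2}\,|\eta+\boldsymbol{A}\xi|.
\]

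\textbf{Step 2 (apply the Diophantine condition).} Since $\xi\neq0$, the pair $(\xi,\eta)$ is not $(0,0)$. Condition \eqref{DCss} therefore gives constants $\varepsilon_0,C_0>0$, independent of $\xi$, with
\[
|\eta+\boldsymbol{A}\xi|\geq C_0 e^{-\varepsilon_0(|\eta|+|\xi|)^{1/s}}.
\]

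\textbf{Step 3 (control $|\eta|$ by $|\xi|$).} From $|\eta_\ell|\leq |a_\ell\cdot\xi|+1/2$ we obtain $|\eta|\leq \|\boldsymbol{A}\|\,|\xi| + \sqrt d/2$. Because $|\xi|\geq1$ for nonzero integer vectors, this becomes $|\eta|+|\xi|\leq K|\xi|$ with $K=1+\|\boldsymbol{A}\|+\sqrt d/2$. Hence
\[
(|\eta|+|\xi|)^{1/s}\leq K^{1/s}|\xi|^{1/s}.
\]
Combining the three steps yields \eqref{equiv_DCss} with
\[
C=4d^{-1/2}C_0, \qquad \varepsilon=\varepsilon_0K^{1/s}.
\]

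The only point requiring care is Step 3. The nearest-integer choice of $\eta$ is what keeps $|\eta|$ linearly bounded by $|\xi|$; the constant $1/2$ is then absorbed using $|\xi|\geq1$. This is also why the Beurling version is simpler than Lemma~\ref{est_DCs}: here a single, possibly enlarged, $\varepsilon$ is allowed, so rescaling $\varepsilon$ costs nothing. In the Roumieu case, by contrast, one has to start from $\varepsilon/K^{1/s}$ to recover an arbitrary $\varepsilon$.
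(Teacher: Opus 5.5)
Your argument is correct, but it is organized differently from the paper's. You prove the estimate directly. The paper argues by contradiction: it extracts a sequence $\xi^{(j)}$ along which $\max_\ell|e^{2\pi i a_\ell\cdot\xi^{(j)}}-1|$ is small, sets $\eta^{(j)}_\ell=-\lfloor a_\ell\cdot\xi^{(j)}\rfloor$, and concludes that $(\mathrm{DC}_{(s)})$ fails. The ingredients are the same: integer rounding, a lower bound for $|e^{2\pi i x}-1|$ near the integers, and the Diophantine inequality.

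Your direct version gives explicit constants $C=4d^{-1/2}C_0$ and $\varepsilon=\varepsilon_0K^{1/s}$. It also makes explicit two points the paper's sketch passes over. First, rounding to the nearest integer guarantees $|\eta_\ell+a_\ell\cdot\xi|\le 1/2$, which is exactly what the sine bound requires. With the floor, $a_\ell\cdot\xi+\eta_\ell$ is the fractional part, which may be close to $1$, and there $\pi|x|\le|e^{2\pi i x}-1|$ is false. Second, your Step 3 supplies the linear bound $|\eta|\le\|\boldsymbol{A}\|\,|\xi|+\sqrt{d}/2$. This is needed to trade $(|\eta|+|\xi|)^{1/s}$ in $(\mathrm{DC}_{(s)})$ for a multiple of $|\xi|^{1/s}$.

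The direct route also avoids the quantifier bookkeeping of the contradiction argument. To contradict $(\mathrm{DC}_{(s)})$, where $\varepsilon$ is a fixed constant, the failing sequence must satisfy a bound like $e^{-j|\xi^{(j)}|^{1/s}}$, not the $e^{-\frac{1}{j}|\xi^{(j)}|^{1/s}}$ written in the paper. What the contradiction format buys is mainly parallelism with the sequence-based Liouville definitions and with the cited Roumieu argument. As you observe, your direct proof handles the Roumieu lemma just as easily by starting from $\varepsilon/K^{1/s}$.
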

	
	\begin{proof}
		Suppose that \eqref{equiv_DCss} does not hold. Then there exists a sequence $\{\xi^{(j)}\}_{j\in\mathbb{N}}$ in $\mathbb{Z}^m$ such that $|\xi^{(j+1)}|>|\xi^{(j)}|$ and
		\begin{equation}\label{est_max_b} 
			\max_{1\leq\ell\leq d}\left|e^{2\pi i a_\ell\cdot\xi^{(j)}}-1\right|<e^{-\frac{1}{j}|\xi^{(j)}|^{\frac{1}{s}}},
		\end{equation}
		for every $j\in\mathbb{N}$.
		
		For each $j\in\mathbb{N}$, consider $\eta^{(j)} = (\eta^{(j)}_1,\dots,\eta^{(j)}_d)\in\mathbb{Z}^d$, where each $\eta^{(j)}_{\ell}=-\lfloor a_\ell\cdot\xi^{(j)} \rfloor$. Here, $\lfloor \cdot \rfloor$ denotes the integer part of the corresponding real number. By \eqref{est_max_b}, we have
		\[
		\left|e^{2\pi i (a_\ell\cdot \xi^{(j)} + \eta^{(j)}_{\ell})}-1\right|
		=
		\left|e^{2\pi i a_\ell\cdot \xi^{(j)}}-1\right|
		\to 0,
		\qquad j\to \infty.
		\]
		
		Then the fractional part of $a_\ell\cdot\xi^{(j)}$ converges to $0$ or to $1$. Hence, we obtain
		\[
		\pi\left|a_\ell\cdot\xi^{(j)}+\eta^{(j)}_{\ell}\right|
		\leq 
		\left|e^{2\pi i(a_\ell\cdot\xi^{(j)}+\eta^{(j)}_{\ell})}-1\right|
		<
		e^{-\frac{1}{j}|\xi^{(j)}|^{\frac{1}{s}}},
		\]
		for every $j\in\mathbb{N}$ and $\ell=1,\dots,d$. We conclude that \eqref{DCss} does not hold for any $\varepsilon>0$.
	\end{proof}

	\begin{proposition}
			Let $\boldsymbol{\omega}=(\omega_1,\dots,\omega_m)$ be a family of $1$-forms in 
			$\mathsf{\Lambda}^1\mathscr{G}^{[s]}_{\partial M}(M)$. Then, $\boldsymbol{\omega}$ is rational if and only if 
			$A(\boldsymbol{\omega})(\mathbb{Z}^m\setminus\{0\})\cap\mathbb{Z}^d\neq\varnothing$.
	\end{proposition}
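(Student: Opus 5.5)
The plan is to reduce both directions to the computation of periods along the distinguished cycles $\sigma_1,\dots,\sigma_{d'}$. The identity doing the work is, for every $\xi\in\mathbb{Z}^m$,
\[
A(\boldsymbol{\omega})\xi=\frac{1}{2\pi}\Big(\int_{\sigma_1}\xi\cdot\boldsymbol{\omega},\dots,\int_{\sigma_d}\xi\cdot\boldsymbol{\omega}\Big)^T .
\]
We combine it with the vanishing $\int_{\sigma_\ell}\xi\cdot\boldsymbol{\omega}=0$ for $d<\ell\le d'$. This vanishing holds because each $\omega_k$, and hence $\xi\cdot\boldsymbol{\omega}$, has cohomology class in $H^1_{\partial M}(M)=\operatorname{span}\{[\omega_1],\dots,[\omega_d]\}$, while $\{[\sigma_\ell]\}$ is the dual basis.

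\emph{Forward direction.} Suppose $\boldsymbol{\omega}$ is rational, with $\xi\neq0$ such that $\xi\cdot\boldsymbol{\omega}$ is integral. Integrality applies to every smooth $1$-cycle, in particular to $\sigma_1,\dots,\sigma_d$. Hence every component of $A(\boldsymbol{\omega})\xi$ is an integer, so $A(\boldsymbol{\omega})\xi\in A(\boldsymbol{\omega})(\mathbb{Z}^m\setminus\{0\})\cap\mathbb{Z}^d$.

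\emph{Converse.} Suppose $\xi\neq0$ satisfies $A(\boldsymbol{\omega})\xi=\eta\in\mathbb{Z}^d$, and set $\theta=\xi\cdot\boldsymbol{\omega}$, a closed form. We must show $\frac{1}{2\pi}\int_\sigma\theta\in\mathbb{Z}$ for an arbitrary smooth $1$-cycle $\sigma$. We know $\frac1{2\pi}\int_{\sigma_\ell}\theta=\eta_\ell\in\mathbb{Z}$ for $\ell\le d$ and $\int_{\sigma_\ell}\theta=0$ for $\ell>d$.

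The integral of a closed form over a cycle depends only on the homology class $[\sigma]\in H_1(M;\mathbb{R})$. The obstacle is that $[\sigma]$ is only a \emph{real} combination $\sum_\ell c_\ell[\sigma_\ell]$, so integer periods on the $\sigma_\ell$ alone do not immediately give integer periods on $\sigma$. The step needing care is therefore to choose the cycles $\sigma_1,\dots,\sigma_{d'}$ so that they form a $\mathbb{Z}$-basis of the free part of $H_1(M;\mathbb{Z})$. This is possible because $M$ has finite topology, so $H_1(M;\mathbb{Z})$ is finitely generated. The bases are then adapted as follows: choose the integral cycles first, then take $\mathcal{B}'$ to be the dual basis within $H^1_{\mathrm{dR}}(M)$ and $\{[\omega_1],\dots,[\omega_d]\}$ a basis of the subspace $H^1_{\partial M}(M)$.

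We must also check that this change of basis is compatible with $H^1_{\partial M}(M)$. If it is not, we instead take $\sigma_1,\dots,\sigma_{d'}$ integral, and argue directly with the linear map $[\sigma]\mapsto\int_\sigma\theta$ on $H_1(M;\mathbb{Z})/\mathrm{torsion}$. Torsion classes contribute zero period, since a torsion class is annihilated by some $N\in\mathbb{N}$ and periods are $\mathbb{Z}$-linear. Hence every integral class is an integer combination of the $\sigma_\ell$, and its period is an integer combination of the $2\pi\eta_\ell$ and zeros. Thus $\frac1{2\pi}\int_\sigma\theta\in\mathbb{Z}$ for every cycle, $\theta$ is integral, and $\boldsymbol{\omega}$ is rational.

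If the paper's fixed choice of $\sigma_\ell$ is not integral, I would first remark that, as in \cite{CKMT}, the cycles may be chosen integral with no loss of generality, and then run the argument above.
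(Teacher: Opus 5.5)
The paper gives no argument of its own here; it cites \cite[Proposition~3.8(i)]{CKMT}. Your period argument is the natural one, and both directions are correct if $\sigma_1,\dots,\sigma_{d'}$ are integral cycles forming a $\mathbb{Z}$-basis of $\Gamma:=H_1(M;\mathbb{Z})/\mathrm{torsion}$ adapted to $H^1_{\partial M}(M)$. The paper tacitly uses such a choice: it takes the $\sigma_\ell$ to be loops via Hurewicz, and it calls the dual forms $\vartheta_\ell$ integral in the proof of Proposition~\ref{liou_DCs}.

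The gap is that you flag exactly this point and never establish it. Your first plan (pick integral cycles, then dualize) matches the paper's construction only if the first $d$ dual classes span $H^1_{\partial M}(M)$. That means $\sigma_{d+1},\dots,\sigma_{d'}$ must span the annihilator $W\subset H_1(M;\mathbb{R})$ of $H^1_{\partial M}(M)$. Your fallback is circular: for an arbitrary integral basis you still use $\int_{\sigma_\ell}\theta=0$ for $\ell>d$ (the ``zeros''), and that is precisely this adaptedness. Without it the converse can fail. Take $\Gamma=\mathbb{Z}e_1\oplus\mathbb{Z}e_2$, $W=\mathbb{R}e_2$, and the $\mathbb{Z}$-basis $\sigma_1=2e_1+e_2$, $\sigma_2=e_1+e_2$. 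A closed form $\theta$ with period $\pi$ on $e_1$ and $0$ on $e_2$ has class in $W^\perp$ and $\frac{1}{2\pi}\int_{\sigma_1}\theta=1\in\mathbb{Z}$, yet it is not integral. For an arbitrary subspace of $H^1_{\mathrm{dR}}(M)$, no adapted integral basis exists at all when $W$ is not spanned by lattice vectors. The forward direction needs the same choice, since integrality only controls periods on integral cycles.

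What is missing is the rationality of $H^1_{\partial M}(M)$. By exactness of $H^1(\overline{M},\partial M)\to H^1(\overline{M})\to H^1(\partial M)$, we have $H^1_{\partial M}(M)=\ker\bigl(H^1(\overline{M};\mathbb{R})\to H^1(\partial M;\mathbb{R})\bigr)$. This restriction map is the transpose of $i_*$, so $W=\operatorname{im}\bigl(i_*:H_1(\partial M;\mathbb{R})\to H_1(M;\mathbb{R})\bigr)$, which is spanned by classes of integral cycles in a collar of $\partial M$. Hence $L:=\Gamma\cap W$ has rank $\dim W=d'-d$. It is saturated, because $\Gamma/L$ embeds in the vector space $H_1(M;\mathbb{R})/W$; so $\Gamma/L$ is free and $L$ is a direct summand of $\Gamma$. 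Take a $\mathbb{Z}$-basis $\sigma_{d+1},\dots,\sigma_{d'}$ of $L$, complete it by $\sigma_1,\dots,\sigma_d$ to a $\mathbb{Z}$-basis of $\Gamma$, and let $\mathcal{B}'$ be the dual basis. Its first $d$ members span $W^{\perp}=H^1_{\partial M}(M)$, so this choice is admissible in the paper's construction. With it, both of your directions go through verbatim, with torsion classes contributing zero period as you note.
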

	\begin{proof}
		See \cite[Proposition 3.8 (i)]{CKMT}.
	\end{proof}
	
	\begin{proposition}\label{liou_DCs}
		Let $\boldsymbol{\omega}=(\omega_1,\dots,\omega_m)$ be a family of $1$-forms in 
		$\mathsf{\Lambda}^1\mathscr{G}^{[s]}_{\partial M}(M)$. Then, the system $\boldsymbol{\omega}$ is:
		\begin{enumerate}
			\item[(i)] $s$-exponential Liouville if and only if it is not rational and 
			$A(\boldsymbol{\omega})$ does not satisfy \eqref{DCs};
			
			\item[(ii)] $(s)$-exponential Liouville if and only if it is not rational and 
			$A(\boldsymbol{\omega})$ does not satisfy \eqref{DCss}.
		\end{enumerate}
	\end{proposition}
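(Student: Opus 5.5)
We prove both items by linking the Liouville sequences $(\xi^{(j)},\theta_j)$ to small values of $|\eta+A(\boldsymbol{\omega})\xi|$ through the periods along $\sigma_1,\dots,\sigma_d$. The Gevrey harmonic forms from Section~\ref{gev_sc_met} supply the integral forms $\theta_j$. The argument follows \cite[Proposition 3.8]{CKMT}, with Gevrey norms replacing $C^\infty$ seminorms. Throughout we fix Gevrey harmonic representatives $\beta_1,\dots,\beta_d\in\mathcal{H}^1(M)\subset\mathsf{\Lambda}^1\mathscr{G}^{[s]}(M)$ of the basis classes $[\omega_1],\dots,[\omega_d]$ of $H^1_{\partial M}(M)$. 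These are dual to $\sigma_1,\dots,\sigma_d$ and have zero periods on $\sigma_\ell$ for $\ell>d$. We also note that every $\omega\in\mathsf{\Lambda}^1\mathscr{G}^{[s]}_{\partial M}(M)$ can be written as $\omega=\sum_\ell(\int_{\sigma_\ell}\omega)\beta_\ell+\mathrm{d}f$ with $f\in\mathscr{G}^{[s]}(M)$. The existence of a smooth $f$ is de Rham's theorem, and Proposition~\ref{gev_gh} (global hypoellipticity of $\mathrm{d}$) upgrades $f$ to Gevrey regularity.

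\emph{Failure of (DC) implies Liouville.} Suppose $\boldsymbol{\omega}$ is not rational and $A=A(\boldsymbol{\omega})$ fails \eqref{DCs}. Then there exist $\varepsilon>0$ and pairs $(\xi^{(j)},\eta^{(j)})\neq(0,0)$ with $|\eta^{(j)}+A\xi^{(j)}|<j^{-1}e^{-\varepsilon(|\eta^{(j)}|+|\xi^{(j)}|)^{1/s}}$. Non-rationality gives $\xi^{(j)}\neq 0$. Since $|\eta^{(j)}|\le |A||\xi^{(j)}|+1$, the exponent is comparable to $|\xi^{(j)}|^{1/s}$ after shrinking $\varepsilon$. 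We must also have $|\xi^{(j)}|\to\infty$: otherwise, passing to a subsequence, the pair $(\xi,\eta)$ would be constant, giving $\eta+A\xi=0$ and contradicting non-rationality. Writing $\xi^{(j)}\cdot\boldsymbol{\omega}=2\pi\sum_\ell (A\xi^{(j)})_\ell\beta_\ell+\mathrm{d}f_j$, we define
\[
\theta_j:=-2\pi\sum_\ell\eta^{(j)}_\ell\beta_\ell+\mathrm{d}f_j .
\]
The form $\theta_j$ is integral, because its periods on $\sigma_\ell$ are $-2\pi\eta^{(j)}_\ell$ for $\ell\le d$ and $0$ for $\ell>d$, and integrality is determined by a homology basis. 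It is Gevrey by construction, and
\[
\xi^{(j)}\cdot\boldsymbol{\omega}-\theta_j=2\pi\sum_\ell(\eta^{(j)}+A\xi^{(j)})_\ell\beta_\ell .
\]
Multiplying by $e^{\varepsilon'|\xi^{(j)}|^{1/s}}$ with $\varepsilon'<\varepsilon$ gives a bounded sequence of coefficients times the fixed forms $\beta_\ell$, hence a bounded set in $\mathsf{\Lambda}^1\mathscr{G}^{s}(M)$. The Beurling case (ii) is identical: failure of \eqref{DCss} gives the small-denominator estimate for every $\varepsilon$, so boundedness holds for every $\varepsilon$.

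\emph{Liouville implies failure of (DC).} Conversely, given $\theta_j$, $\xi^{(j)}$ and $\varepsilon$, set $\eta^{(j)}_\ell:=-\frac1{2\pi}\int_{\sigma_\ell}\theta_j\in\mathbb{Z}$. Then
\[
|\eta^{(j)}+A\xi^{(j)}|\le \tfrac1{2\pi}\sum_\ell\Big|\int_{\sigma_\ell}(\xi^{(j)}\cdot\boldsymbol{\omega}-\theta_j)\Big| .
\]
The key estimate is that integration over the fixed compact cycles $\sigma_\ell$ is bounded by the sup-norm, and hence by a Gevrey seminorm $\|\cdot\|_{K,h,\mathsf{\Lambda}^1}$ on a compact $K\supset\bigcup\sigma_\ell$ (take the $\alpha=0$ term). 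Boundedness of the sequence therefore yields $|\eta^{(j)}+A\xi^{(j)}|\le Ce^{-\varepsilon|\xi^{(j)}|^{1/s}}$. Since $|\eta^{(j)}|\lesssim|\xi^{(j)}|$, the right side is at most $Ce^{-\varepsilon'(|\eta^{(j)}|+|\xi^{(j)}|)^{1/s}}$. Here $\xi^{(j)}\neq0$ and $|\xi^{(j)}|\to\infty$. In case (i), if \eqref{DCs} held, taking $\varepsilon''<\varepsilon'$ would force $C_{\varepsilon''}e^{-\varepsilon''(\cdot)^{1/s}}\le Ce^{-\varepsilon'(\cdot)^{1/s}}$ along a sequence tending to infinity, which is impossible. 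In case (ii) the bound holds for every $\varepsilon$, which contradicts \eqref{DCss} for any fixed $\varepsilon,C$.

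\emph{Main obstacle.} The delicate step is the decomposition $\omega=\sum c_\ell\beta_\ell+\mathrm{d}f$ with $f$ Gevrey of the correct type, together with $\beta_\ell$ being Gevrey. This is exactly where the Gevrey scattering metric, Proposition~\ref{gh_lapl} and Proposition~\ref{gev_gh} enter, and where the hypothesis $[\omega_k]\in H^1_{\partial M}(M)$ is needed, since it guarantees that harmonic representatives exist. Everything else is bookkeeping with exponents, using that $|\eta|$ and $|\xi|$ are comparable.
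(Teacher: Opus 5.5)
Your proof is correct and follows the paper's route. Gevrey harmonic representatives dual to $\sigma_1,\dots,\sigma_d$ turn the integer vectors $-\eta^{(j)}$, together with the exact part of $\xi^{(j)}\cdot\boldsymbol{\omega}$, into the integral forms $\theta_j$, and the converse comes from integrating $\xi^{(j)}\cdot\boldsymbol{\omega}-\theta_j$ over the cycles.

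Your variants are in fact slightly cleaner than the paper's. You read $\eta^{(j)}$ directly off the periods of $\theta_j$, whereas the paper decomposes $\theta_j$ into harmonic plus exact parts, which tacitly presumes $[\theta_j]\in H^1_{\partial M}(M)$. You also use $|\eta^{(j)}|\lesssim|\xi^{(j)}|$ explicitly to pass between $|\xi|^{1/s}$ and $(|\eta|+|\xi|)^{1/s}$, a step the paper omits.

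One thing to make explicit: in (ii), state the diagonal choice $|\eta^{(j)}+A\xi^{(j)}|<e^{-j(|\eta^{(j)}|+|\xi^{(j)}|)^{1/s}}$. This is what guarantees that a single sequence works for every $\varepsilon$ at once.
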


	\begin{proof}
	By the Hodge theorem for scattering manifolds, there exist 
	$\vartheta_1,\dots,\vartheta_d \in \mathcal{H}^1(M)$ whose cohomology classes form a basis of 
	$H^1_{\partial M}(M)$ dual to the cycles $\sigma_1,\dots,\sigma_d$. 
	Accordingly, for each $k=1,\dots,m$, we can write
	\begin{equation}\label{decomp_omega_k}
		\omega_k = \sum_{\ell=1}^d \lambda_{\ell k}\,\vartheta_\ell
		+ \mathrm{d}v_k,
	\end{equation}
	for suitable functions $v_k$.  By Propositions~\ref{gev_gh} and~\ref{gh_lapl}, we have $v_k\in\mathscr{G}^{[s]}(M)$ and $\vartheta_\ell \in\mathsf{\Lambda}^1 \mathscr{G}^{[s]}(M)$.
	
	Integrating \eqref{decomp_omega_k} over $\sigma_\ell$ yields
	\[
	\lambda_{\ell k} = A(\boldsymbol{\omega})_{\ell k}, \qquad \ell=1,\dots,d,\ k=1,\dots,m.
	\]
	
	Consequently, for every $\xi\in\mathbb{Z}^m$,
	\begin{equation}\label{omega}
		\xi\cdot\boldsymbol{\omega} = \sum_{\ell=1}^d \left( \sum_{k=1}^m A(\boldsymbol{\omega})_{\ell k}\xi_k \right)\vartheta_\ell + \sum_{k=1}^m \xi_k\,\mathrm{d}v_k.
	\end{equation}
	
	\medskip
	
	\noindent
	$(i)$ Suppose that $A(\boldsymbol{\omega})$ does not satisfy \eqref{DCs}. 
	Then there exist $\varepsilon>0$ and a sequence
	\(\{(\eta^{(j)},\xi^{(j)})\}\) in \(\mathbb{Z}^d \times (\mathbb{Z}^m \setminus \{0\})\) such that
	\begin{equation}\label{not_DC_r}
		|\eta^{(j)} + A(\boldsymbol{\omega})\xi^{(j)}| < j^{-1} e^{-\varepsilon |\xi^{(j)}|^{1/s}}, \qquad j\in\mathbb{N}.
	\end{equation}
	
	We claim that $|\xi^{(j)}|\to\infty$. 
	Indeed, if $\{\xi^{(j)}\}_{j\in\mathbb{N}}$ is bounded, then $A(\boldsymbol{\omega})\xi^{(j)}$ would take only finitely many values. 
	Since
	\[
	|\eta^{(j)}| \le |\eta^{(j)} + A(\boldsymbol{\omega})\xi^{(j)}| + |A(\boldsymbol{\omega})\xi^{(j)}|,
	\]
	the sequence $\{\eta^{(j)}\}_{j\in\mathbb{N}}$ would also be bounded, and hence $(\eta^{(j)},\xi^{(j)})$ would assume only finitely many values, contradicting \eqref{not_DC_r}. 
	Thus $|\xi^{(j)}|\to\infty$.
	
	For each $j\in\mathbb{N}$, define
	\begin{equation}\label{vartheta_r}
		\theta_j = -\sum_{\ell=1}^d \eta^{(j)}_\ell\,\vartheta_\ell + \sum_{k=1}^m \xi^{(j)}_k\,\mathrm{d}v_k \in		\mathsf{\Lambda}^1 \mathscr{G}^s(M),
	\end{equation}
	where
	\[\xi^{(j)}=(\xi^{(j)}_1,\dots,\xi^{(j)}_m)\quad\text{and}\quad \eta^{(j)}=(\xi^{(j)}_1,\dots,\eta^{(j)}_d).\]
	
	Each $\theta_j$ is integral, since $(\eta^{(j)})_\ell\in\mathbb{Z}$, 
	the forms $\vartheta_\ell$ are integral, and exact forms are integral.
	
	Using \eqref{omega} and \eqref{vartheta_r}, we obtain
	\begin{align*}
		\rho_j & := e^{\varepsilon|\xi^{(j)}|^{\frac{1}{s}}}(\xi^{(j)} \cdot \boldsymbol{\omega} - \theta_j)\\
		& = e^{\varepsilon|\xi^{(j)}|^{\frac{1}{s}}}\left[\sum_{\ell=1}^{d}\left(\sum_{k=1}^{m}A(\boldsymbol{\omega})_{\ell k}\xi^{(j)}_k\right)\vartheta_\ell + \sum_{k=1}^{m}\xi^{(j)}_k\,\mathrm{d}v_k + \sum_{\ell=1}^{d}\eta^{(j)}_\ell\vartheta_\ell - \sum_{k=1}^{m}\xi^{(j)}_k\,\mathrm{d}v_k\right]\\
		& = e^{\varepsilon|\xi^{(j)}|^{\frac{1}{s}}} \sum_{\ell=1}^{d} \left(\sum_{k=1}^{m} A(\boldsymbol{\omega})_{\ell k}\xi^{(j)}_k + \eta^{(j)}_\ell\right)\vartheta_\ell.
	\end{align*}

	It remains to prove that the sequence $\{\rho_j\}_{j\in\mathbb{N}}$ is bounded in 	$\mathsf{\Lambda}^1 \mathscr{G}^s(M)$. Let $(V,t)$ be a local chart and let $K\Subset V$.  For each multi-index $\alpha\in\mathbb{N}_0^n$, we have 
	\begin{equation}\label{sup_dtrho_r}
		\sup_{t\in K} |\partial_t^\alpha \rho_j(t)| 
		= e^{\varepsilon|\xi^{(j)}|^{1/s}} \sum_{\ell=1}^d 	\left| \sum_{k=1}^m
		A(\boldsymbol{\omega})_{\ell k}\xi^{(j)}_k + \eta^{(j)}_\ell \right|
		\, 	\sup_{t\in K} |\partial_t^\alpha \vartheta_\ell(t)|.
	\end{equation}
	
	Since each $\vartheta_\ell$ is Gevrey of order $s$, 
	there exist constants $C,h>0$, depending only on $K$, such that
	\begin{equation}\label{est_harm_r}
		\sup_{t\in K}	|\partial_t^\alpha \vartheta_\ell(t)|
		\le C h^{|\alpha|}\alpha!^s, \qquad \alpha\in\mathbb{N}_0^n,\ \ell=1,\dots,d.
	\end{equation}
	
	Substituting \eqref{est_harm_r} into \eqref{sup_dtrho_r}, we obtain
	\begin{align*}
		\sup_{t\in K} |\partial_t^\alpha \rho_j(t)|
		&\le C h^{|\alpha|}\alpha!^s e^{\varepsilon|\xi^{(j)}|^{1/s}} \sum_{\ell=1}^d
		\left| \sum_{k=1}^m A(\boldsymbol{\omega})_{\ell k}\xi^{(j)}_k + \eta^{(j)}_\ell 	\right|.
	\end{align*}
	
	Observe that
	\[
	\sum_{\ell=1}^d \left| \sum_{k=1}^m A(\boldsymbol{\omega})_{\ell k}\xi^{(j)}_k +
	\eta^{(j)}_\ell \right|
	\le C_1 |\eta^{(j)} + A(\boldsymbol{\omega})\xi^{(j)}|, 
	\]
	for some constant $C_1>0$ depending only on $d$.  Hence,
	\[
	\sup_{t\in K} |\partial_t^\alpha \rho_j(t)| \le C' h^{|\alpha|}\alpha!^s 
	e^{\varepsilon|\xi^{(j)}|^{1/s}} |\eta^{(j)} + A(\boldsymbol{\omega})\xi^{(j)}|.
	\]
	
	By \eqref{not_DC_r}, the sequence
	\[
	\big\{ e^{\varepsilon|\xi^{(j)}|^{1/s}} |\eta^{(j)} + A(\boldsymbol{\omega})\xi^{(j)}|	\big\}_{j\in\mathbb{N}}
	\]
	is bounded in $\mathbb{R}$. Therefore there exists $C''>0$, independent of $j\in\mathbb{N}$, such that
	\[
	\sup_{t\in K} |\partial_t^\alpha \rho_j(t)| 
	\le C'' h^{|\alpha|}\alpha!^s, \qquad \alpha\in\mathbb{N}_0^n.
	\]
	
	This shows that $\{\rho_j\}_{j\in\mathbb{N}}$ is bounded in $\mathsf{\Lambda}^1\mathscr{G}^s(M)$.

	\medskip
	
	Conversely, suppose that the family $\boldsymbol{\omega}$ is $s$-exponential Liouville.  Then there exist $\varepsilon>0$, a sequence $\{\xi^{(j)}\}_{j\in\mathbb{N}} $ in $\mathbb{Z}^m$ with $|\xi^{(j)}|\to\infty$, and a sequence of integral $1$-forms $\{\theta_j\}_{j\in\mathbb{N}}$ such that the sequence
	\[
	\{\rho_j\}_{j\in\mathbb{N}} := \big\{e^{\varepsilon|\xi^{(j)}|^{1/s}} (\xi^{(j)} \cdot \boldsymbol{\omega} - \theta_j)\big\}_{j\in\mathbb{N}}
	\]
	is bounded in $\mathsf{\Lambda}^1 \mathscr{G}^s(M)$.
	
	Since each $\theta_j$ is integral and belongs to $\mathsf{\Lambda}^1\mathscr{G}^s(M)$, we may write
	\begin{equation}\label{theta_r}
		\theta_j = \sum_{\ell=1}^{d} \mu_{j\ell}\,\vartheta_\ell + \mathrm{d}w_j,
	\end{equation}
	with $\mu_{j\ell} \in \mathbb{Z}$ and $w_j \in \mathscr{G}^s(M)$.
	
	Combining \eqref{omega} and \eqref{theta_r}, we obtain
	\begin{align}
		\rho_j &= e^{\varepsilon|\xi^{(j)}|^{1/s}} \Bigg[ \sum_{\ell=1}^{d}
		\left(\sum_{k=1}^{m} A(\boldsymbol{\omega})_{\ell k}\xi^{(j)}_k
		- \mu_{j\ell} \right) \vartheta_\ell + \sum_{k=1}^{m}\xi^{(j)}_k\,\mathrm{d}v_k
		- \mathrm{d}w_j \Bigg]. \label{rho_j_r}
	\end{align}
	
	Since $\int_{\sigma_\ell} \mathrm{d}v_k = 0$ and $\int_{\sigma_\ell} \mathrm{d}w_j = 0$, 
	integration of \eqref{rho_j_r} over $\sigma_\ell$ yields
	\[
	\frac{1}{2\pi} \int_{\sigma_\ell} \rho_j = e^{\varepsilon|\xi^{(j)}|^{1/s}} \left( \sum_{k=1}^{m}
	A(\boldsymbol{\omega})_{\ell k}\xi^{(j)}_k - \mu_{j\ell} \right), \qquad \ell=1,\dots,d.
	\]

	Since $\{\rho_j\}_{j\in\mathbb{N}}$ is bounded in $\mathsf{\Lambda}^1 \mathscr{G}^s(M)$, there exists a constant $C>0$ such that
	\[
	\left| \int_{\sigma_\ell} \rho_j \right| \le C, \qquad \forall\, j\in\mathbb{N},\ \ell=1,\dots,d.
	\]
	
	Therefore,
	\[
	e^{\varepsilon|\xi^{(j)}|^{1/s}} \left| \sum_{k=1}^{m} A(\boldsymbol{\omega})_{\ell k}\xi^{(j)}_k - \mu_{j\ell} \right| \le C', \qquad \ell=1,\dots,d,
	\]
	for some $C'>0$ independent of $j$.
	
	Defining $\eta^{(j)} := -(\mu_{j1}, \dots, \mu_{jd}) \in \mathbb{Z}^d$, we obtain
	\[
	|\eta^{(j)} + A(\boldsymbol{\omega})\xi^{(j)}| \le C'' e^{-\varepsilon|\xi^{(j)}|^{1/s}},
	\]
	for some $C''>0$ independent of $j$.
	
	Since $|\xi^{(j)}| \to \infty$, passing to a subsequence if necessary, we can assume that $e^{-\frac{\varepsilon}{2}|\xi^{(j)}|^{\frac{1}{s}}} \leq (C''j)^{-1}$. Hence,
	\[
	|\eta^{(j)} + A(\boldsymbol{\omega}) \xi^{(j)}| \leq j^{-1} e^{-\frac{\varepsilon}{2}|\xi^{(j)}|^{\frac{1}{s}}}, \qquad \forall\, j \in \mathbb{N},
	\]
	which implies that \eqref{DCs} does not hold.
	
	\medskip
	
	\noindent
	$(ii)$ Suppose first that $A(\boldsymbol{\omega})$ does not satisfy \eqref{DCss}. 
	Then, for every $j\in\mathbb{N}$, there exist  	$(\eta^{(j)},\xi^{(j)})$ in $\mathbb{Z}^d \times (\mathbb{Z}^m\setminus\{0\})$ such that
	\begin{equation}\label{not_DC_b}
		|\eta^{(j)} + A(\boldsymbol{\omega})\xi^{(j)}| 	< e^{-j|\xi^{(j)}|^{1/s}}.
	\end{equation}
	
	As in the proof of (i) above, one verifies that $\{|\xi^{(j)}|\}$ is unbounded. Hence, passing to a subsequence if necessary, we may assume that $|\xi^{(j)}|\to\infty$.
	
	For each $j$, define
	\begin{equation*}
		\theta_j = -\sum_{\ell=1}^d \eta^{(j)}_\ell\,\vartheta_\ell + \sum_{k=1}^m \xi^{(j)}_k\,\mathrm{d}v_k,
	\end{equation*}
	where
	\[\xi^{(j)}=(\xi^{(j)}_1,\dots,\xi^{(j)}_m)\quad\text{and}\quad \eta^{(j)}=(\xi^{(j)}_1,\dots,\eta^{(j)}_d)\]
	as before.	
	Each $\theta_j$ is integral, since $\eta^{(j)}_\ell\in\mathbb{Z}$, $\ell=1,\dots,d$, and exact forms are integral.

	Fix $\varepsilon>0$. We compute
	\begin{align*}
		\rho_j^{(\varepsilon)} 
		&:= e^{\varepsilon|\xi^{(j)}|^{\frac{1}{s}}} (\xi^{(j)} \cdot \boldsymbol{\omega} - \theta_j) \\
		&=
		e^{\varepsilon|\xi^{(j)}|^{\frac{1}{s}}} \left[ \sum_{\ell=1}^{d} \left(
		\sum_{k=1}^{m} A(\boldsymbol{\omega})_{\ell k}\xi^{(j)}_k \right)\vartheta_\ell
		+ \sum_{k=1}^{m}\xi^{(j)}_k\,\mathrm{d}v_k + 	\sum_{\ell=1}^{d} \eta^{(j)}_\ell\vartheta_\ell - \sum_{k=1}^{m}\xi^{(j)}_k\,\mathrm{d}v_k \right]\\
		& = e^{\varepsilon|\xi^{(j)}|^{\frac{1}{s}}} \sum_{\ell=1}^{d}
		\left( 	\sum_{k=1}^{m} A(\boldsymbol{\omega})_{\ell k}\xi^{(j)}_k + \eta^{(j)}_\ell
		\right)\vartheta_\ell. 
	\end{align*}

	We now prove that the sequence $\{\rho_j^{(\varepsilon)}\}_{j\in\mathbb{N}}$ is bounded in 
	$\mathsf{\Lambda}^1\mathscr{G}^{(s)}(M)$.
	
	Let $(V,t)$ be a local chart and let $K\Subset V$. 
	Since each $\vartheta_\ell$ is Gevrey of Beurling type of order $s$, 
	for every $h>0$ there exists $C_h>0$ such that
	\[
	\sup_{t\in K} |\partial_t^\alpha \vartheta_\ell(t)|
	\le C_h\, h^{|\alpha|}\alpha!^s, \qquad \alpha\in\mathbb{N}_0^n,
	\ \ell=1,\dots,d.
	\]
	
	Therefore, for every $\alpha\in\mathbb{N}_0^n$, we have
	\begin{align*}
		\sup_{t\in K} |\partial_t^\alpha \rho_j^{(\varepsilon)}(t)|
		&\le e^{\varepsilon|\xi^{(j)}|^{\frac{1}{s}}} \sum_{\ell=1}^{d} \left|
		\sum_{k=1}^{m} A(\boldsymbol{\omega})_{\ell k}\xi^{(j)}_k  + \eta^{(j)}_\ell \right| \sup_{t\in K} |\partial_t^\alpha \vartheta_\ell(t)| \\
		&\le C_h\, h^{|\alpha|}\alpha!^s e^{\varepsilon|\xi^{(j)}|^{\frac{1}{s}}} 		\sum_{\ell=1}^{d} \left| \sum_{k=1}^{m} A(\boldsymbol{\omega})_{\ell k}\xi^{(j)}_k
		+ \eta^{(j)}_\ell \right|.
	\end{align*}
	
	Since
	\[
	\sum_{\ell=1}^{d} \left| \sum_{k=1}^{m} A(\boldsymbol{\omega})_{\ell k}\xi^{(j)}_k
	+ \eta^{(j)}_\ell \right| \le C_1\, |\eta^{(j)} + A(\boldsymbol{\omega})\xi^{(j)}| 
	\]
	for some constant $C_1>0$ depending only on $d$, 
	we obtain
	\[
	\sup_{t\in K} |\partial_t^\alpha \rho_j^{(\varepsilon)}(t)|  \le C_h'\, h^{|\alpha|}\alpha!^s e^{\varepsilon|\xi^{(j)}|^{\frac{1}{s}}} |\eta^{(j)} + A(\boldsymbol{\omega})\xi^{(j)}|.
	\]
	
	Using \eqref{not_DC_b}, we deduce
	\[
	\sup_{t\in K} |\partial_t^\alpha \rho_j^{(\varepsilon)}(t)|
	\le 	C_h'\, h^{|\alpha|}\alpha!^s e^{(\varepsilon-j)|\xi^{(j)}|^{\frac{1}{s}}}.
	\]
	
	If $j>\varepsilon$, then $e^{(\varepsilon-j)|\xi^{(j)}|^{\frac{1}{s}}}\le 1$, and hence
	\[
	\sup_{t\in K} |\partial_t^\alpha \rho_j^{(\varepsilon)}(t)|
	\le C_h''\, h^{|\alpha|}\alpha!^s,
	\]
	uniformly in $j$, for $j$ sufficiently large.
	
	Since $\varepsilon>0$ was arbitrary and the above estimate holds for every $h>0$, 
	it follows that $\{\rho_j^{(\varepsilon)}\}_{j\in\mathbb{N}}$ is bounded in 
	$\mathsf{\Lambda}^1\mathscr{G}^{(s)}(M)$.	
	Therefore, $\boldsymbol{\omega}$ is $(s)$-exponential Liouville.
	
	Conversely, suppose that $\boldsymbol{\omega}$ is $(s)$-exponential Liouville. 
	Then there exists a sequence $\{\xi^{(j)}\}_{j\in\mathbb{N}}$ in $\mathbb{Z}^m$ with $|\xi^{(j)}|\to\infty$ 
	and integral $1$-forms $\theta_j$ such that, for every $\varepsilon>0$, 
	the sequence
	\[
	\{\rho_j^{(\varepsilon)} \}_{j\in\mathbb{N}} = \big\{e^{\varepsilon|\xi^{(j)}|^{1/s}} 	(\xi^{(j)}\cdot \boldsymbol{\omega} - \theta_j)\big\}_{j\in\mathbb{N}}
	\]
	is bounded in $\mathsf{\Lambda}^1\mathscr{G}^{(s)}(M)$.
	
	Writing
	\[
	\theta_j = \sum_{\ell=1}^d \mu_{j\ell}\,\vartheta_\ell + \mathrm{d}w_j,
	\qquad \mu_{j\ell}\in\mathbb{Z},
	\]
	and integrating over $\sigma_\ell$, we obtain
	\[
	\frac{1}{2\pi} \int_{\sigma_\ell} \rho_j^{(\varepsilon)} 
	= e^{\varepsilon|\xi^{(j)}|^{1/s}} \left( A(\boldsymbol{\omega})\xi^{(j)} - \mu_j \right)_\ell.
	\]
	
	Since $\{\rho_j^{(\varepsilon)}\}_{j\in\mathbb{N}}$ is bounded in $\mathsf{\Lambda}^1 \mathscr{G}^{(s)}(M)$, for every $\varepsilon>0$ there exists $C_\varepsilon>0$ such that
	\[
	|\eta^{(j)} + A(\boldsymbol{\omega})\xi^{(j)}| 	\le C_\varepsilon e^{-\varepsilon|\xi^{(j)}|^{1/s}},
	\qquad \eta^{(j)}:=-\mu_j\in\mathbb{Z}^d.
	\]
	
	As this estimate holds for every $\varepsilon>0$,  condition \eqref{DCss} cannot be satisfied. 
	
	Conversely, suppose that the family $\boldsymbol{\omega}$ is $(s)$-exponential Liouville. 	Then there exist a sequence $\{\xi^{(j)}\}_{j\in\mathbb{N}}$ in $\mathbb{Z}^m$, with $|\xi^{(j)}|\to\infty$, and a sequence of integral $1$-forms $\{\theta_j\}_{j\in\mathbb{N}}$ such that, 
	for every $\varepsilon>0$, the sequence
	\[
	\{\rho_j^{(\varepsilon)}\}_{j\in\mathbb{N}} = \big\{e^{\varepsilon|\xi^{(j)}|^{\frac{1}{s}}} 	(\xi^{(j)} \cdot \boldsymbol{\omega} - \theta_j)\big\}_{j\in\mathbb{N}}
	\]
	is bounded in $\mathsf{\Lambda}^1 \mathscr{G}^{(s)}(M)$.
	
	Since each $\theta_j$ is integral and belongs to 
	$\mathsf{\Lambda}^1\mathscr{G}^{(s)}(M)$, 
	we may write
	\begin{equation}\label{theta_bss}
		\theta_j = \sum_{\ell=1}^{d} \mu_{j\ell}\,\vartheta_\ell + \mathrm{d}w_j,
	\end{equation}
	where $\mu_{j\ell}\in\mathbb{Z}$ and $w_j\in\mathscr{G}^{(s)}(M)$.
	
	Fix $\varepsilon>0$. 
	Combining \eqref{omega} and \eqref{theta_bss}, we obtain
	\begin{align*}
		\rho_j^{(\varepsilon)}
		&= e^{\varepsilon|\xi^{(j)}|^{\frac{1}{s}}} \left[ 	\sum_{\ell=1}^{d} \left( \sum_{k=1}^{m} A(\boldsymbol{\omega})_{\ell k}\xi^{(j)}_k \right)\vartheta_\ell
		+ \sum_{k=1}^{m}\xi^{(j)}_k\,\mathrm{d}v_k 	- \sum_{\ell=1}^{d} \mu_{j\ell}\,\vartheta_\ell - \mathrm{d}w_j
		\right] \\ 
		&= e^{\varepsilon|\xi^{(j)}|^{\frac{1}{s}}} \sum_{\ell=1}^{d} \left( \sum_{k=1}^{m}
		A(\boldsymbol{\omega})_{\ell k}\xi^{(j)}_k - \mu_{j\ell} 	\right)\vartheta_\ell
		+ e^{\varepsilon|\xi^{(j)}|^{\frac{1}{s}}} 	\left( 	\sum_{k=1}^{m} \xi^{(j)}_k\,\mathrm{d}v_k - \mathrm{d}w_j \right).
	\end{align*}
	
	Integrating over the cycles $\sigma_\ell$,
	we obtain
	\[
	\frac{1}{2\pi} \int_{\sigma_\ell} \rho_j^{(\varepsilon)} =  	e^{\varepsilon |\xi^{(j)}|^{\frac{1}{s}}}\left( \sum_{k=1}^{m} A(\boldsymbol{\omega})_{\ell k}\xi^{(j)}_k
	- \mu_{j\ell} 	\right), 	\qquad 	\ell=1,\dots,d.
	\]
	
	Since $\{\rho_j^{(\varepsilon)}\}_{j\in\mathbb{N}}$ is bounded in  $\mathsf{\Lambda}^1 \mathscr{G}^{(s)}(M)$, it is bounded in each seminorm defining the Beurling topology. In particular, there exists a constant 
	$C_\varepsilon>0$ such that
	\[
	\left| 	\int_{\sigma_\ell} 	\rho_j^{(\varepsilon)} 	\right|
	\le 	C_\varepsilon, 	\qquad 	\forall\, j\in\mathbb{N},\  \ell=1,\dots,n.
	\]
	
	Therefore,
	\[
	e^{\varepsilon|\xi^{(j)}|^{\frac{1}{s}}} \left| \sum_{k=1}^{m} 	A(\boldsymbol{\omega})_{\ell k}\xi^{(j)}_k - \mu_{j\ell} 	\right|
	\le C_\varepsilon, 	\qquad	\forall\,j\in\mathbb{N},\ \ell=1,\dots,d.
	\]
	
	Defining $\eta^{(j)} := -(\mu_{j1},\dots,\mu_{jd})\in\mathbb{Z}^d$, 
	we obtain
	\begin{equation}\label{nDCs}
	|\eta^{(j)} + A(\boldsymbol{\omega})\xi^{(j)}| \le C_\varepsilon' e^{-\varepsilon| \xi^{(j)}|^{\frac{1}{s}}},\qquad\forall\,j\in\mathbb{N}.
	\end{equation}
	
	Since \eqref{nDCs} holds for every $\varepsilon>0$, $A(\boldsymbol{\omega})$ does not satisfy \eqref{DCss}.
	\end{proof}

	\section{Global Gevrey hypoellipticity of $\mathbb{L}$}\label{sec-sHypoel}
	
	In this section, we provide a characterization of the global hypoellipticity of $\mathbb{L}$ in Gevrey spaces of Roumieu and Beurling type for $s>1$. 
	The main tool in the proof is the theory of partial Fourier series in Gevrey classes. 
	For the convenience of the reader, the necessary definitions, results and proofs concerning this topic are collected in Appendix~\ref{appendix}.
	
	\begin{proposition}\label{uni_cov}
		Let $\Pi:\hat M\to M$ be the universal covering of $M$, and let $\omega$ be a real-valued closed $1$-form on $M$. 
		The following statements are equivalent:
		\begin{enumerate}
			\item $\omega$ is integral;
			\item For every $\psi\in C^\infty(\hat{M})$ satisfying $\mathrm{d}\psi = \Pi^*\omega$, and for all $P,Q\in\hat{M}$ with $\Pi(P)=\Pi(Q)$, one has
			\[
			\psi(P)-\psi(Q)\in 2\pi\mathbb{Z}.
			\]
		\end{enumerate}
		In particular, if $\omega$ is integral and $\psi\in C^\infty(\hat M)$ satisfies $\mathrm{d}\psi=\Pi^*\omega$, then $e^{i\psi(P)} = e^{i\psi(Q)}$ whenever $\Pi(P)=\Pi(Q)$, and hence $e^{i\psi}$ descends to a smooth function on $M$. Moreover,
		\[
		\mathrm{d}(e^{i\psi}) = i\omega\, e^{i\psi}.
		\]
	\end{proposition}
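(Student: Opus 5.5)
The plan is to pass everything to the universal cover, where $\Pi^*\omega$ is exact, and translate integrality of $\omega$ into a statement about differences of a primitive over points in the same fibre. First, $\hat M$ is simply connected, so $H^1_{\mathrm{dR}}(\hat M)=0$. Since $\mathrm{d}\Pi^*\omega=\Pi^*\mathrm{d}\omega=0$, there is a smooth $\psi$ with $\mathrm{d}\psi=\Pi^*\omega$. Any two such primitives differ by a constant, because $\hat M$ is connected. Hence the quantity $\psi(P)-\psi(Q)$ does not depend on which primitive is chosen.

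The key identity relates these differences to periods. Take $P,Q\in\hat M$ with $\Pi(P)=\Pi(Q)=p$, and choose a smooth path $\hat\gamma$ in $\hat M$ from $Q$ to $P$. Then $\gamma=\Pi\circ\hat\gamma$ is a smooth loop based at $p$, and
\[
\psi(P)-\psi(Q)=\int_{\hat\gamma}\mathrm{d}\psi=\int_{\hat\gamma}\Pi^*\omega=\int_\gamma\omega .
\]
Conversely, every loop $\gamma$ in $M$ based at $p$ lifts to a path $\hat\gamma$ starting at a chosen $Q\in\Pi^{-1}(p)$. Its endpoint $P$ lies in the same fibre, so the same identity holds. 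Thus the set $\{\psi(P)-\psi(Q):\Pi(P)=\Pi(Q)\}$ coincides with the set of periods of $\omega$ over smooth loops in $M$.

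To prove (1)$\Rightarrow$(2), note that a loop is a smooth $1$-cycle, so integrality gives $\int_\gamma\omega\in2\pi\mathbb{Z}$. To prove (2)$\Rightarrow$(1), we need every smooth $1$-cycle $\sigma$ to have period in $2\pi\mathbb{Z}$, and this is the only step with some content. Because $\omega$ is closed, $\int_\sigma\omega$ depends only on the homology class $[\sigma]\in H_1(M;\mathbb{Z})$. Assuming $M$ connected (otherwise we argue componentwise), the Hurewicz map $\pi_1(M,p)\to H_1(M;\mathbb{Z})$ is surjective. So $[\sigma]$ is represented by a single loop, which may be taken smooth by smooth approximation, and its period lies in $2\pi\mathbb{Z}$ by (2) together with the identity above.

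The consequences are then immediate. If $\omega$ is integral, $\psi(P)-\psi(Q)\in2\pi\mathbb{Z}$ gives $e^{i\psi(P)}=e^{i\psi(Q)}$, so $e^{i\psi}$ is constant on fibres. Since $\Pi$ is a local diffeomorphism, it follows that $e^{i\psi}=f\circ\Pi$ for a unique smooth $f$ on $M$. Finally, $\Pi^*(\mathrm{d}f)=\mathrm{d}(e^{i\psi})=i\,\mathrm{d}\psi\,e^{i\psi}=\Pi^*(i\omega f)$. As $\Pi^*$ is injective on forms for a local diffeomorphism, this yields $\mathrm{d}f=i\omega f$, which is the stated formula with $f$ written as $e^{i\psi}$.
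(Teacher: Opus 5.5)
Your proof is correct and complete. The paper states this proposition without proof, so there is no argument to compare against line by line. Your key step is the identity $\psi(P)-\psi(Q)=\int_{\hat\gamma}\mathrm{d}\psi=\int_{\Pi\circ\hat\gamma}\omega$ for a lift $\hat\gamma$ of a loop, combined with Hurewicz to represent every class in $H_1(M;\mathbb{Z})$ by a loop. This is exactly the mechanism the paper uses later, when it computes $\psi_k(Q_\ell)-\psi_k(Q_0)=\int_{\hat\sigma_\ell}\mathrm{d}\psi_k=\int_{\sigma_\ell}\omega_k$ for loops based at $t_0$ in the proof of Theorem~\ref{gh_roumieu}.

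You were right to read ``$1$-cycle'' with integer coefficients; with real coefficients, integrality would force every period to vanish. If you want to avoid both Hurewicz and smooth approximation in (2)$\Rightarrow$(1), you can split a smooth integral cycle $\sum_i n_i\sigma_i$ directly into piecewise smooth loops. Join the endpoints of each $\sigma_i$ to a fixed base point by chosen smooth paths; the added contributions cancel because $\partial\sigma=0$.
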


	\begin{remark}\label{hatM_analytic}
		If $M$ is an analytic manifold, then $\hat M$ is also an analytic manifold and $\Pi:\hat M\to M$ is an analytic map. Since the pullback $\Pi^*$ is a local analytic diffeomorphism, if $\omega$ is a Gevrey integral 1-form, $\Pi^{*}\omega$ is also Gevrey of same order. Moreover, the ${[s]}$-scattering metric on $M$ induces a Gevrey metric on $\hat M$ via pullback.  Hence, we can defined the transposed of an operator on $\hat M$. In particular, we can use \cite[Corollary 2.4]{CKMT} (as in Proposition \ref{gev_gh}) to obtain that the exterior derivative $\mathrm{d}$ acting on functions on $\hat M$ is $[s]$-globally hypoelliptic. Since $\psi\in C^\infty(\hat M)$ is such that $\mathrm{d}\psi=\Pi^*\omega$ we obtain $\psi\in\mathscr{G}^{[s]}(\hat M)$. In particular, if $\omega$ is integral, Proposition \ref{uni_cov} yields $e^{i\psi}\in\mathscr{G}^{[s]}(M)$.
	\end{remark}

	\begin{lemma}\label{lemma_ADL}
		Let $U\subset\mathbb{R}^n$ be an open set and let 
		$\boldsymbol{\phi}=(\phi_1,\dots,\phi_m):U\to\mathbb{R}^m$ 
		be a smooth map. 
		Assume that, for some compact set $K\Subset U$, there exist constants $C,h>0$ such that
		\[
		\sup_{t\in K} 	|\partial_t^\alpha \phi_k(t)| \le C\, h^{|\alpha|}\alpha!^s,
		\qquad 	\forall\, \alpha\in\mathbb{N}_0^n, \ k=1,\dots,m.
		\]
		Then, for every $\varepsilon>0$, there exists a constant $h_0>0$, depending only on $C,h,m$ and $\varepsilon$, such that
		\[
		\sup_{t\in K} |\partial_t^\alpha e^{i\xi\cdot\boldsymbol{\phi}(t)}| \le
		h_0^{|\alpha|}\alpha!^s \, e^{\varepsilon|\xi|^{\frac{1}{s}}}, \qquad
		\forall\,\xi\in\mathbb{Z}^m,\ \forall\,\alpha\in\mathbb{N}_0^n.
		\]
	\end{lemma}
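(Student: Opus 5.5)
Set $\Phi_\xi:=\xi\cdot\boldsymbol{\phi}$, a real-valued function with $|\partial^\beta\Phi_\xi|\le mC|\xi|h^{|\beta|}\beta!^s$ on $K$, and estimate $\partial^\alpha e^{i\Phi_\xi}$ with the Faà di Bruno formula. Since the outer function is the exponential, every derivative of it has modulus one. For $|\alpha|=k\ge1$ this gives
\[
|\partial^\alpha e^{i\Phi_\xi}|\le \sum_{j=1}^{k}\frac{1}{j!}\sum_{\substack{\gamma_1+\cdots+\gamma_j=\alpha\\ |\gamma_i|\ge1}}\frac{\alpha!}{\gamma_1!\cdots\gamma_j!}\prod_{i=1}^j|\partial^{\gamma_i}\Phi_\xi|.
\]
Substituting the bound for $\Phi_\xi$ yields
\[
|\partial^\alpha e^{i\Phi_\xi}|\le \sum_{j=1}^k \frac{(mC|\xi|)^j}{j!}\,h^{k}\,\alpha!\sum_{\gamma}\prod_i\gamma_i!^{\,s-1}.
\]
Because $\prod_i\gamma_i!\le\alpha!$ and $s\ge1$, we have $\prod_i\gamma_i!^{\,s-1}\le\alpha!^{\,s-1}$. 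The number of ordered decompositions of $\alpha$ into $j$ nonzero multi-indices is at most $2^{n+k}\cdot 2^k$, a bound that is geometric in $k$. Altogether,
\[
|\partial^\alpha e^{i\Phi_\xi}|\le (c h)^k\,\alpha!^s\sum_{j\le k}\frac{(mC|\xi|)^j}{j!}
\]
with $c$ depending only on $n$. This is the routine part of the argument.

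The key step is to trade the factor $(mC|\xi|)^j/j!$ for $e^{\varepsilon|\xi|^{1/s}}$ at the cost of $j!^{\,s-1}$, which is then absorbed into $\alpha!^s$. Write
\[
\frac{(mC|\xi|)^j}{j!}=\frac{(mC|\xi|)^j}{j!^s}\,j!^{\,s-1}.
\]
For any $\delta>0$,
\[
\frac{(\delta|\xi|^{1/s})^{sj}}{j!^s}=\Big(\frac{(\delta|\xi|^{1/s})^j}{j!}\Big)^s\le e^{s\delta|\xi|^{1/s}}.
\]
Choosing $\delta$ with $s\delta=\varepsilon$ therefore gives
\[
\frac{(mC|\xi|)^j}{j!^s}\le\Big(\frac{mC}{\delta^s}\Big)^j e^{\varepsilon|\xi|^{1/s}}.
\]
Since $j\le k$, we also have $j!^{\,s-1}\le k!^{\,s-1}\le (n^k\alpha!)^{s-1}$, using $|\alpha|!\le n^{|\alpha|}\alpha!$.

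Collecting the estimates,
\[
|\partial^\alpha e^{i\xi\cdot\boldsymbol\phi}|\le (ch)^k\,n^{k(s-1)}\,\alpha!^{2s-1}\,\Big(1+\frac{mC}{\delta^s}\Big)^k\,k\,e^{\varepsilon|\xi|^{1/s}}.
\]
This bound carries $\alpha!^{2s-1}$ instead of $\alpha!^{s}$, so the naive bookkeeping above is too lossy. The main obstacle is exactly this double counting of $\alpha!^{s-1}$: once from $\prod\gamma_i!^{\,s-1}$ and once from $j!^{\,s-1}$.

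To avoid it, I would replace the crude bound by the sharper inequality
\[
\frac{\alpha!}{\gamma_1!\cdots\gamma_j!}\prod_i\gamma_i!^s=\alpha!\prod_i\gamma_i!^{\,s-1}\le \alpha!\Big(\frac{\alpha!}{j!}\Big)^{s-1}c_1^{k}.
\]
This holds because $\prod_i|\gamma_i|!\cdot j!\le$ (a geometric factor) $\cdot\,k!$ whenever the $j$ parts are nonzero, which is the standard estimate $\binom{k-1}{j-1}$-type bound used in proofs that Gevrey classes are closed under composition. The $1/j!^{\,s-1}$ it produces then cancels the $j!^{\,s-1}$ borrowed above. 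The result is $\alpha!^s$ times $\tilde c^{\,k}e^{\varepsilon|\xi|^{1/s}}$. Setting $h_0=\tilde c$, which depends only on $C,h,m,\varepsilon$ (and $n$, $s$), completes the proof; the factor $k$ is absorbed into the geometric constant. The case $\alpha=0$ is trivial, since $|e^{i\Phi_\xi}|=1$.
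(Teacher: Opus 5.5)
Your final argument is correct. It runs Faà di Bruno with all the $\xi$-dependence in $\Phi_\xi$, uses $j!\prod_i\gamma_i!\le j!\prod_i|\gamma_i|!\le j!\,(k-j+1)!\le(k+1)!\le(2n)^k\alpha!$ to turn $\frac{(mC|\xi|)^j}{j!}\,\alpha!\prod_i\gamma_i!^{\,s-1}$ into $(2n)^{k(s-1)}\alpha!^s\frac{(mC|\xi|)^j}{j!^s}$, and then applies $\frac{(mC|\xi|)^j}{j!^s}\le(mCs^s\varepsilon^{-s})^je^{\varepsilon|\xi|^{1/s}}$; the paper gives no proof of its own but defers to \cite[Lemma~6.1]{ADL2023gh} and \cite[Lemma~4.3]{DM2020}, and the bound $[2\tilde h(S+1)]^{|\alpha|}\alpha!^se^{\varepsilon_0|\xi|^{1/s}}$, $S=nCs^sm^s\varepsilon_0^{-s}$, that it quotes from there in Lemma~\ref{lemma_ADL_beurling} has the same structure as what you obtain. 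One correction: the number of ordered decompositions of $\alpha$ into $j$ nonzero parts is not bounded by $2^{n+k}\cdot 2^k$ in general (for $\alpha=(1,\dots,1)$ and $j=k=n$ it equals $n!$, which exceeds $2^{3n}$ once $n\ge 20$), but the bound $\prod_{l=1}^n\binom{\alpha_l+j-1}{j-1}\le 2^{(n+1)k}$ is still geometric in $k$, which is all you need.
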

	
	Lemma~\ref{lemma_ADL} appears in \cite[Lemma~6.1]{ADL2023gh}, and its proof follows the same pattern as the proof of \cite[Lemma~4.3]{DM2020}. 
	By adapting the argument and modifying the quantifiers accordingly, we obtain the following Beurling version.

	\begin{lemma}\label{lemma_ADL_beurling}
		Let $U\subset\mathbb{R}^n$ be an open set and let 
		$\boldsymbol{\phi}=(\phi_1,\dots,\phi_m):U\to\mathbb{R}^m$ 
		be a smooth map. 
		Assume that for every compact set $K\Subset U$ and every $h>0$, 
		there exists $C>0$ such that
		\[
		\sup_{t\in K} |\partial_t^\alpha \phi_k(t)| \le C\, h^{|\alpha|}\alpha!^s, \qquad \forall\, \alpha\in\mathbb{N}_0^n,\  k=1,\dots,m.
		\]
		
		Then, for every $h>0$, there exists $\varepsilon_0>0$ such that
		\[
		\sup_{t\in K} |\partial_t^\alpha e^{i\xi\cdot\boldsymbol{\phi}(t)}| 
		\le h^{|\alpha|}\alpha!^s \, e^{\varepsilon_0|\xi|^{\frac{1}{s}}},
		\qquad \forall\,\xi\in\mathbb{Z}^m,\ \forall\,\alpha\in\mathbb{N}_0^n.
		\]
	\end{lemma}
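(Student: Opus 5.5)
The plan is to follow the proof of Lemma~\ref{lemma_ADL} (i.e.\ \cite[Lemma~4.3]{DM2020}), keeping track of how the constants depend on $h$. The Beurling hypothesis lets us make the geometric factor as small as we like. The price is a large exponential weight $\varepsilon_0$, and the statement allows that. Fix $K\Subset U$ and $h>0$, and let $h'>0$ be an auxiliary parameter to be chosen in terms of $h$, $n$ and $s$ only. By hypothesis there is $C_{h'}>0$ with $\sup_K|\partial^\gamma\phi_k|\le C_{h'}h'^{|\gamma|}\gamma!^s$ for all $\gamma$ and $k$. Setting $g=i\,\xi\cdot\boldsymbol{\phi}$, this gives
\[
\sup_K|\partial^\gamma g|\le mC_{h'}|\xi|\,h'^{|\gamma|}\gamma!^s .
\]

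Next we apply the Faà di Bruno formula to $e^{g}$. For $|\alpha|\ge1$, $\partial^\alpha e^{g}$ is $e^{g}$ times a sum over $k=1,\dots,|\alpha|$ and over set partitions of the $|\alpha|$ derivatives into $k$ nonempty blocks $\gamma_1+\dots+\gamma_k=\alpha$ of the products $\prod_j\partial^{\gamma_j}g$. Since $|e^{g}|=1$ on $K$ (as $\boldsymbol\phi$ is real), we get
\[
|\partial^\alpha e^{g}|\le\sum_{k=1}^{|\alpha|}\ \sum_{\text{partitions}}(mC_{h'}|\xi|)^k\,h'^{|\alpha|}\prod_j\gamma_j!^s .
\]
The combinatorial step is the standard one. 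Grouping by block sizes $\ell_1,\dots,\ell_k$, the number of partitions is at most $|\alpha|!/(k!\prod_j\ell_j!)$. Using $\prod\gamma_j!\le\prod\ell_j!$ and $s\ge1$, the inner sum is bounded by
\[
\frac{|\alpha|!}{k!}\,\binom{|\alpha|-1}{k-1}\max\prod_j\ell_j!^{\,s-1}\le 2^{|\alpha|}\,\frac{|\alpha|!^{\,s}}{k!^{\,s}} .
\]
This uses $\prod\ell_j!\le(|\alpha|-k+1)!\le|\alpha|!/k!$, a crude but sufficient bound. Then $|\alpha|!\le n^{|\alpha|}\alpha!$ gives
\[
|\partial^\alpha e^{g}|\le(2n^sh')^{|\alpha|}\alpha!^s\sum_{k=0}^{|\alpha|}\frac{(mC_{h'}|\xi|)^k}{k!^s}.
\]

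The exponential factor is produced by the elementary inequality $x^k/k!^s=\big(x^{k/s}/k!\big)^s\le e^{sx^{1/s}}$ for $x\ge0$. Applying it for each $k$ gives
\[
\sum_{k\le|\alpha|}\frac{(mC_{h'}|\xi|)^k}{k!^s}\le(|\alpha|+1)\,e^{s(mC_{h'})^{1/s}|\xi|^{1/s}} .
\]
Absorbing $|\alpha|+1\le2^{|\alpha|}$ leads to the choice $h'=h/(4n^s)$. Then $(2n^sh')^{|\alpha|}2^{|\alpha|}=h^{|\alpha|}$, and we set $\varepsilon_0=s(mC_{h'})^{1/s}$. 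The case $\alpha=0$ is trivial because $|e^{g}|=1$. Since $h'$ depends only on $h$, $n$ and $s$, the constant $\varepsilon_0$ depends only on $h$, $K$, $m$, $n$ and $s$, which is exactly the quantifier order in the statement.

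The main obstacle is the combinatorial bookkeeping in the Faà di Bruno sum. It must produce only a factor $A^{|\alpha|}\alpha!^s/k!^s$, with no hidden dependence on $\xi$ beyond $|\xi|^k$, since the division by $k!^s$ is what turns the polynomial growth in $|\xi|$ into $e^{\varepsilon_0|\xi|^{1/s}}$. If the crude partition count above turns out to be too loose, I would instead copy the generating-function argument of \cite[Lemma~4.3]{DM2020}, which gives a bound of the form $A^{|\alpha|}\sum_k|\xi|^k(|\alpha|-k)!^s$, and then use $(|\alpha|-k)!\,k!\le|\alpha|!$. Everything after that is only a matter of choosing $h'$ and absorbing constants.
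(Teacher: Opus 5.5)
Your proof is correct, and its overall strategy matches the paper's. The Beurling hypothesis lets you make the Gevrey parameter $h'$ of $\boldsymbol{\phi}$ as small as you like, and you pay for the resulting large constant $C_{h'}$ with a large weight $\varepsilon_0$.

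The difference is in how the core estimate is obtained. The paper does not redo the Fa\`a di Bruno computation. It imports from the proof of \cite[Lemma~4.3]{DM2020} the bound $[2\tilde h(S+1)]^{|\alpha|}\alpha!^s e^{\varepsilon_0|\xi|^{1/s}}$ with $S=nCs^sm^s\varepsilon_0^{-s}$. There $\varepsilon_0$ is a free parameter tied to the geometric factor, and the paper then takes $\varepsilon_0$ so large that $C\varepsilon_0^{-s}\le 1$. You instead prove the bound from scratch by counting set partitions. Your use of $x^k/k!^s\le e^{sx^{1/s}}$ with $x=mC_{h'}|\xi|$ amounts to fixing $\varepsilon_0=s(mC_{h'})^{1/s}$ at once. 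All dependence on $C_{h'}$ then sits in the exponential, and the geometric factor $4n^sh'$ does not depend on it. This makes your argument self-contained, the constants explicit, and the choice $h'=h/(4n^s)$ immediate.

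One wording slip needs fixing. For a fixed composition $(\ell_1,\dots,\ell_k)$, the number of set partitions with those block sizes is not in general at most $|\alpha|!/(k!\prod_j\ell_j!)$. For $|\alpha|=3$ and sizes $(1,2)$ there are $3$ such partitions, against the bound $3/2$. What holds, and what your displayed bound actually uses, is the identity
\[
\sum_{|\pi|=k}\ \prod_{B\in\pi}|B|!^{\,s}=\frac{|\alpha|!}{k!}\sum_{(\ell_1,\dots,\ell_k)}\ \prod_{j=1}^k\ell_j!^{\,s-1}.
\]
Here the sum runs over the $\binom{|\alpha|-1}{k-1}$ compositions of $|\alpha|$ into $k$ positive parts; the identity holds because each unordered partition corresponds to exactly $k!$ ordered ones. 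With this reading, the rest of your computation goes through unchanged.
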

	\begin{proof}
		Fix a compact set $K\Subset U$ and let $h>0$ be given. By hypothesis, for every $\tilde h>0$ there exists a constant $C=C(\tilde h)>0$ such that
		\[
		\sup_{t\in K} |\partial_t^\alpha \phi_k(t)| \le C\, \tilde h^{|\alpha|}\alpha!^s,
		\qquad 	\forall\, \alpha\in\mathbb{N}_0^n, \ k=1,\dots,m.
		\]
		
		From the proof of \cite[Lemma~4.3]{DM2020}, under the above estimate one obtains
		\begin{equation}\label{est_intermediate}
			\sup_{t\in K} |\partial_t^\alpha e^{i\xi\cdot\boldsymbol{\phi}(t)}| 
			\le \big[2\tilde h (S+1)\big]^{|\alpha|} \alpha!^s 	\,  e^{\varepsilon_0|\xi|^{\frac{1}{s}}},
		\end{equation}
		for all $\alpha\in\mathbb{N}_0^n$,  where $S = n C s^s m^s \varepsilon_0^{-s}.$
		
		We now choose the parameters appropriately. 
		Let
		\[
		\tilde h = \frac{h}{4 n s^s m^s}.
		\]
		With this choice,
		\[
		2\tilde h (S+1) = 2\tilde h \big(n C s^s m^s \varepsilon_0^{-s} + 1\big).
		\]
		
		If we choose $\varepsilon_0>0$ sufficiently large so that $C \varepsilon_0^{-s} \le 1,$ then $S = n C s^s m^s \varepsilon_0^{-s} 	\le n s^s m^s.$	Hence,
		\[
		S+1 \le n s^s m^s + 1 \le 2 n s^s m^s, 
		\]
		and therefore
		\[
		2\tilde h (S+1) \le 2\tilde h (2 n s^s m^s) = h.
		\]
		
		Substituting this estimate into \eqref{est_intermediate},  we obtain
		\[
		\sup_{t\in K} |\partial_t^\alpha e^{i\xi\cdot\boldsymbol{\phi}(t)}|
		\le h^{|\alpha|} \alpha!^s \, e^{\varepsilon_0|\xi|^{\frac{1}{s}}},
		\]
		for all $\alpha\in\mathbb{N}_0^n$ and $\xi\in\mathbb{Z}^m$, which concludes the proof.
	\end{proof}
	
	\begin{proposition}\label{prop_rat}
		Fix $s>1$ and let $\boldsymbol{\omega}=(\omega_1,\dots,\omega_m)$ be a family of real-valued closed $1$-forms in $\mathsf{\Lambda}^1\mathscr{G}^{[s]}_{\partial M}(M)$. 
		If $\boldsymbol{\omega}$ is rational, then $\mathbb{L}$ is not ${[s]}$-globally hypoelliptic.
	\end{proposition}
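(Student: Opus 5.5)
Since $\boldsymbol{\omega}$ is rational, there is $\xi_0\in\mathbb{Z}^m\setminus\{0\}$ such that $\xi_0\cdot\boldsymbol{\omega}$ is integral. The plan is to build a distribution $u$ on $M\times\mathbb{T}^m$ that is not in $\mathscr{G}^{[s]}$ but satisfies $\mathbb{L}u=0$. For this we use the classical construction: a lacunary Fourier series supported on the multiples $j\xi_0$, $j\in\mathbb{N}$, each coefficient being a solution of the twisted equation $\mathbb{L}_{j\xi_0}$ on $M$.

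\textbf{Step 1 (solutions of the twisted equations).} Let $\Pi:\hat M\to M$ be the universal covering and choose $\psi\in C^\infty(\hat M)$ with $\mathrm{d}\psi=\Pi^*(\xi_0\cdot\boldsymbol{\omega})$. By Proposition~\ref{uni_cov}, for each $j\in\mathbb{Z}$ the function $e^{-ij\psi}$ descends to a function $\phi_j$ on $M$. Since $j\xi_0\cdot\boldsymbol{\omega}$ is also integral, this function satisfies
\[
\mathrm{d}\phi_j=-ij(\xi_0\cdot\boldsymbol{\omega})\phi_j .
\]
By Remark~\ref{hatM_analytic}, $\phi_j\in\mathscr{G}^{[s]}(M)$, and $|\phi_j|\equiv1$. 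Now set
\[
u(t,x)=\sum_{j=1}^{\infty}\phi_j(t)\,e^{ij\xi_0\cdot x}.
\]
The coefficients are uniformly bounded, so the series converges in $\mathscr{D}'(M\times\mathbb{T}^m)$: pairing with a test function, the $x$-Fourier coefficients of the test function decay rapidly, uniformly in $t$ on compacts. Applying $\mathbb{L}$ termwise gives
\[
\mathrm{d}_t\phi_j+ij(\xi_0\cdot\boldsymbol{\omega})\phi_j=0,
\]
so $\mathbb{L}u=0\in\mathsf{\Lambda}^1\mathscr{G}^{[s]}(M\times\mathbb{T}^m)$.

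\textbf{Step 2 ($u$ is not Gevrey).} We use the characterization of $\mathscr{G}^{[s]}(M\times\mathbb{T}^m)$ via partial Fourier series from Appendix~\ref{appendix}. If $u$ were in $\mathscr{G}^{[s]}$, then for each compact $K\Subset M$ the partial Fourier coefficients $\hat u(t,\xi)$ would satisfy
\[
\sup_K|\hat u(\cdot,\xi)|\le Ce^{-\varepsilon|\xi|^{1/s}}
\]
for some $\varepsilon>0$ (Roumieu), respectively for every $\varepsilon>0$ (Beurling). Here $\hat u(t,j\xi_0)=\phi_j(t)$ has modulus $1$ on all of $M$, which contradicts this decay as $j\to\infty$ on any nonempty $K$. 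Hence $u\notin\mathscr{G}^{[s]}(M\times\mathbb{T}^m)$, and $\mathbb{L}$ is not ${[s]}$-globally hypoelliptic. (Alternatively, $u$ is not even smooth, since it restricts along $\xi_0$ to a Dirac-type series.)

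The only delicate point is justifying that $\phi_j$ is Gevrey and well defined on $M$, not just on $\hat M$. This is exactly what Proposition~\ref{uni_cov} and Remark~\ref{hatM_analytic} provide: the $[s]$-global hypoellipticity of $\mathrm{d}$ on $\hat M$ gives $\psi\in\mathscr{G}^{[s]}(\hat M)$, and Gevrey classes are stable under composition with the entire function $e^{-ij\cdot}$. Apart from this, we only need the necessity direction of the Fourier characterization in the appendix, namely that Gevrey functions have exponentially decaying partial Fourier coefficients on compacts. That direction is straightforward: integrate by parts in $x$ and optimize over the order of differentiation.
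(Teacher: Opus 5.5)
Your proposal is correct and follows the paper's proof. The paper also takes $\psi$ on the universal cover with $\mathrm{d}\psi=\Pi^*(\xi_0\cdot\boldsymbol{\omega})$, sets $u=\sum_{j\ge1}e^{ij\psi(t)}e^{-ij\xi_0\cdot x}$ (your series with the opposite sign convention), checks $\mathbb{L}u=0$ term by term, and concludes $u\notin\mathscr{G}^{[s]}(M\times\mathbb{T}^m)$ because the Fourier coefficients have modulus one and do not decay. Your remarks on convergence in $\mathscr{D}'$ and on the Gevrey regularity of the $\phi_j$ only fill in details the paper leaves implicit.
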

	
	\begin{proof}
		Suppose that $\boldsymbol{\omega}$ is rational. 
		Then, as in the smooth case, there exists $\xi_0\in\mathbb{Z}^m\setminus\{0\}$ such that $\xi_0\cdot\boldsymbol{\omega}$ is an integral $1$-form.
		
		Let $\psi\in C^\infty(\hat M)$ be a function satisfying
		\[
		\mathrm{d}\psi = \Pi^*(\xi_0\cdot\boldsymbol{\omega}),
		\]
		where $\Pi:\hat M\to M$ denotes the universal covering of $M$. 
		By Proposition~\ref{uni_cov}, the function $e^{i\psi}$ descends to a smooth function on $M$.
		
		Define
		\[
		u(t,x) 	= \sum_{j=1}^\infty e^{ij\psi(t)}\,e^{-ij\xi_0\cdot x}.
		\]
		
		Then $u\in\mathscr{D}'(M\times\mathbb{T}^m)$, but $u\notin C^\infty(M\times\mathbb{T}^m)$, since its Fourier coefficients do not decay. In particular, $u\notin \mathscr{G}^{[s]} (M\times\mathbb{T}^m)$.
		
		Observe that
		\[
		\mathbb{L}\big(e^{ij\psi(t)}e^{-ij\xi_0\cdot x}\big) = e^{-ij\xi_0\cdot x}
		\big( ij\,\mathrm{d}\psi - ij\,(\xi_0\cdot\boldsymbol{\omega}) \big) e^{ij\psi(t)} =0,
		\]

		Thus $\mathbb{L}u=0\in\mathscr{G}^{[s]}(M\times\mathbb{T}^m)$, while 
		$u\notin\mathscr{G}^{[s]}(M\times\mathbb{T}^m)$. 
		Therefore, $\mathbb{L}$ is not ${[s]}$-globally hypoelliptic.
	\end{proof}

\begin{theorem}\label{gh_roumieu}
	Fix $s>1$ and let $\boldsymbol{\omega}=(\omega_1,\dots,\omega_m)$ be a family of real-valued closed 1-forms on $\mathsf{\Lambda}^1\mathscr{G}^s_{\partial M}(M)$. The operator $\mathbb{L}$ is $s$-globally hypoelliptic 
	if and only if $\boldsymbol{\omega}$ is neither rational nor $s$-exponential Liouville.
\end{theorem}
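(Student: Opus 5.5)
The proof splits into necessity and sufficiency. Rationality is already excluded by Proposition~\ref{prop_rat}, so two directions remain. First, if $\boldsymbol{\omega}$ is $s$-exponential Liouville, $\mathbb{L}$ is not $s$-globally hypoelliptic. Second, if $\boldsymbol{\omega}$ is neither rational nor $s$-exponential Liouville, $\mathbb{L}$ is $s$-globally hypoelliptic.

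Throughout I use the decomposition \eqref{omega} from the proof of Proposition~\ref{liou_DCs}: $\xi\cdot\boldsymbol{\omega}=\sum_\ell (A(\boldsymbol{\omega})\xi)_\ell\vartheta_\ell+\mathrm{d}(\xi\cdot v)$, with Gevrey harmonic $\vartheta_\ell$ and $v_k\in\mathscr{G}^s(M)$. I also use the Gevrey partial Fourier series characterization from Appendix~\ref{appendix}. That is, $u\in\mathscr{G}^s(M\times\mathbb{T}^m)$ if and only if its coefficients $u_\xi$ satisfy: for each $K\Subset M$ there are $C,h,\epsilon>0$ with $\sup_K|\partial^\alpha u_\xi|\le Ch^{|\alpha|}\alpha!^se^{-\epsilon|\xi|^{1/s}}$. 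The analogous statement holds for $1$-forms, with $(\mathbb{L}u)_\xi=\mathbb{L}_\xi u_\xi$.

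\emph{Necessity.} Suppose $\boldsymbol{\omega}$ is $s$-exponential Liouville. By Proposition~\ref{liou_DCs}(i), $A(\boldsymbol{\omega})$ fails \eqref{DCs}, giving $\varepsilon>0$, $|\xi^{(j)}|\to\infty$ and $\eta^{(j)}$ with $|\eta^{(j)}+A\xi^{(j)}|<j^{-1}e^{-\varepsilon|\xi^{(j)}|^{1/s}}$. Let $\theta_j$ be the integral forms from that proof. Fix a base point and let $\psi_j$ be the primitive of $\Pi^*\theta_j$ on $\hat M$. By Remark~\ref{hatM_analytic}, $e^{i\psi_j}\in\mathscr{G}^s(M)$. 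Since $\theta_j=-\sum_\ell\eta^{(j)}_\ell\vartheta_\ell+\mathrm{d}(\xi^{(j)}\cdot v)$, we have $e^{i\psi_j}=e^{i\xi^{(j)}\cdot v}\,e^{-i\sum_\ell\eta^{(j)}_\ell\Theta_\ell}$, where $\Theta_\ell$ is a primitive of $\Pi^*\vartheta_\ell$.

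Define $u=\sum_j e^{i\psi_j(t)}e^{-i\xi^{(j)}\cdot x}$. Its coefficients have modulus $1$, so $u\in\mathscr{D}'\setminus C^\infty$. Then $(\mathbb{L}u)_{\xi^{(j)}}=i(\theta_j-\xi^{(j)}\cdot\boldsymbol{\omega})e^{i\psi_j}=-ie^{-\varepsilon|\xi^{(j)}|^{1/s}}\rho_j e^{i\psi_j}$, where $\{\rho_j\}$ is bounded in $\mathsf{\Lambda}^1\mathscr{G}^s(M)$.

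The key estimate is a Gevrey bound on $e^{i\psi_j}$ that loses at most $e^{(\varepsilon/2)|\xi^{(j)}|^{1/s}}$. Apply Lemma~\ref{lemma_ADL} on each chart to the phase $\xi^{(j)}\cdot v$ with exponent $\varepsilon/4$. For the harmonic part, apply the same lemma using $|\eta^{(j)}|\lesssim|\xi^{(j)}|$, so $e^{\varepsilon/4\cdot|\eta^{(j)}|^{1/s}}$ is absorbed after shrinking the parameter. Locally this uses analytic-chart primitives of $\vartheta_\ell$; the multivaluedness only contributes unimodular constants. With the Leibniz rule and the Gevrey algebra property, the coefficients of $\mathbb{L}u$ decay like $e^{-(\varepsilon/2)|\xi^{(j)}|^{1/s}}$ in each Gevrey seminorm. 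Hence $\mathbb{L}u\in\mathsf{\Lambda}^1\mathscr{G}^s$ while $u\notin\mathscr{G}^s$.

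\emph{Sufficiency.} Let $u\in\mathscr{D}'$ with $f=\mathbb{L}u\in\mathsf{\Lambda}^1\mathscr{G}^s$. Each equation $\mathbb{L}_\xi u_\xi=f_\xi$ gives $u_\xi\in\mathscr{G}^s(M)$ by Proposition~\ref{gev_gh}. Set $\phi_\xi$ as the primitive of $\Pi^*(\xi\cdot\boldsymbol{\omega})$ on $\hat M$. Then $\mathrm{d}(e^{i\phi_\xi}u_\xi)=e^{i\phi_\xi}f_\xi$ on $\hat M$. Non-rationality together with Lemma~\ref{est_DCs} (the contrapositive of Proposition~\ref{liou_DCs}(i)) says some $\ell$ has $|e^{2\pi i a_\ell\cdot\xi}-1|\ge C_\epsilon e^{-\epsilon|\xi|^{1/s}}$ for every $\epsilon$.

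I will then use the standard cycle formula, as in \cite{CKMT,ADL2023gh}. Integrate along a lift of $\sigma_\ell$ from $P$ to its deck translate to get
\[
u_\xi(t)=\bigl(e^{2\pi i a_\ell\cdot\xi}-1\bigr)^{-1}\int_{\gamma}e^{i(\phi_\xi(\tau)-\phi_\xi(t))}f_\xi(\tau),
\]
with $\ell=\ell(\xi)$ chosen by the lemma. The right-hand side involves a compact path, so the curves for points of $K$ stay in a fixed compact set.

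Now estimate derivatives in $t$. The factor $e^{-i\phi_\xi(t)}$ costs $e^{\epsilon|\xi|^{1/s}}$ by Lemma~\ref{lemma_ADL}, and so does the integrand's phase. The small divisor costs $C_\epsilon^{-1}e^{\epsilon|\xi|^{1/s}}$. Meanwhile $f_\xi$ gives $e^{-\epsilon_0|\xi|^{1/s}}$ with $\epsilon_0$ fixed by $f$. Choosing $3\epsilon<\epsilon_0$ yields the decay required for $u\in\mathscr{G}^s$.

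\emph{Main obstacle.} The delicate part is making the cycle formula Gevrey-uniform in $t\in K$. Joining $t$ to the path means the $t$-derivatives fall on both $e^{-i\phi_\xi(t)}$ and the path endpoint. One also has to check that uniformly many charts and a fixed $h$ suffice. I would handle this by fixing the curve $\gamma$ from a base point and writing $u_\xi(t)=e^{-i\phi_\xi(t)}\bigl(c_\xi+\int_{t_0}^te^{i\phi_\xi}f_\xi\bigr)$ locally. The constant $c_\xi$ is determined by the cycle formula and bounded by the above exponential balance, and the local integral is Gevrey by standard primitive estimates.
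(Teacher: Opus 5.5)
Your proposal is correct. The sufficiency half, once you adopt the base-point fix from your last paragraph, is the paper's argument. The paper also fixes $t_0$ and writes $\hat u_\xi$ on a ball as $e^{i\xi\cdot(\boldsymbol\phi(t_0)-\boldsymbol\phi(t))}\hat u_\xi(t_0)$ plus a local primitive term. It bounds $\hat u_\xi(t_0)$ through the loop identity along lifts of the $\sigma_\ell$ based at $t_0$, using Lemma~\ref{est_DCs}; on the loop only $|e^{i\xi\cdot\boldsymbol\psi}|=1$ enters, so there is no loss there. It then handles the local term with Lemma~\ref{lemma_ADL} and the Leibniz rule.

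Where you differ is the necessity half.

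\emph{The paper's route.} It keeps the integral forms $\theta_j$ supplied by the definition of $s$-exponential Liouville. Boundedness of $\rho_j$ gives boundedness of $|\xi^{(j)}|^{-1}\theta_j$, and Lemma~\ref{lemma_ADL} is applied to the $j$-dependent phase $|\xi^{(j)}|^{-1}\psi_j\circ\Pi^{-1}$ with the real frequency $|\xi^{(j)}|$. This relies on the lemma's constants depending only on the Gevrey bounds of the phase, uniformly in $j$.

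\emph{Your route.} You pass through Proposition~\ref{liou_DCs}(i) to the failure of \eqref{DCs} and discard the given $\theta_j$. You rebuild $\theta_j=-\sum_\ell\eta^{(j)}_\ell\vartheta_\ell+\mathrm{d}(\xi^{(j)}\cdot v)$, so that $e^{i\psi_j}$ factors, up to a unimodular constant, as $e^{i\xi^{(j)}\cdot v}\,e^{-i\eta^{(j)}\cdot\Theta}$. Both factors now have fixed Gevrey phases and integer frequencies, so Lemma~\ref{lemma_ADL} applies literally. The bound $|\eta^{(j)}|\le(\|A(\boldsymbol\omega)\|+1)|\xi^{(j)}|$ lets you absorb the second factor's loss into $e^{(\varepsilon/4)|\xi^{(j)}|^{1/s}}$.

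\emph{What each buys.} Your route costs the Hodge decomposition and the Diophantine reformulation, but it avoids any uniformity question in the lemma. The paper's route is shorter and does not need Proposition~\ref{liou_DCs} for this implication.
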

\begin{proof}
	
	If $\boldsymbol{\omega}$ is rational, then $\mathbb{L}$ is not $s$-globally hypoelliptic by Proposition \ref{prop_rat}. 
	If $\boldsymbol{\omega}$ is $s$-exponential Liouville, then there exist $\varepsilon>0$, a sequence $\{\xi^{(j)}\}_{j\in\mathbb{N}}$ in $\mathbb{Z}^m$ such that $|\xi^{(j)}|\to\infty$, and a sequence $\{\theta_j\}_{j\in\mathbb{N}}$ of integral $1$-forms in $\mathsf{\Lambda}^1\mathscr{G}^s(M)$ such that 
	\[
	\{e^{\varepsilon|\xi^{(j)}|^{\frac{1}{s}}}(\xi^{(j)}\cdot\boldsymbol{\omega}-\theta_j)\}
	\]
	is bounded in $\mathsf{\Lambda}^1\mathscr{G}^s(M)$.
	
	For each $j\in\mathbb{N}$, let $\psi_j\in C^\infty(\hat M)$ be such that $\mathrm{d}\psi_j=\Pi^*\theta_j$. 
	By Remark \ref{hatM_analytic}, each $e^{i\psi_j}$ belongs to $\mathscr{G}^s(M)$. 
	Define
	\[
	u = \sum_{j=1}^{\infty} e^{i\psi_j}e^{-i\xi^{(j)}\cdot x}.
	\]
	
	It is clear that $u\in \mathscr{D}'(M\times\mathbb{T}^m)\setminus\mathscr{G}^s(M\times\mathbb{T}^m)$. Moreover,
	\[
	f=\mathbb{L}u 
	= -i\sum_{j=1}^\infty (\xi^{(j)}\cdot\boldsymbol{\omega}-\theta_j)e^{i\psi_j}e^{-i\xi^{(j)}\cdot x}.
	\]
	
	Let
	\[
	\rho_j = e^{\varepsilon|\xi^{(j)}|^{\frac{1}{s}}}(\xi^{(j)}\cdot\boldsymbol{\omega}-\theta_j).
	\]
	
	Then
	\[
	\widehat{f}_\xi = 
	\begin{cases}
		-ie^{-\varepsilon|\xi^{(j)}|^{\frac{1}{s}}}e^{i\psi_j}\rho_j,&\text{if } \xi=\xi^{(j)},\\
		0,&\text{otherwise}.
	\end{cases}
	\]
	
	Let $(V,t)$ be a local chart such that $\Pi:\hat V\to V$ is an analytic diffeomorphism for some open subset $\hat V\subset\hat M$, and let $K\Subset V$. 
	Since $\{\rho_j\}_{j\in\mathbb{N}}$ is bounded in $\mathsf{\Lambda}^1\mathscr{G}^s(M)$, there exist $C_1,h_1>0$ such that
	\[
	\sup_{t\in K}|\partial_t^\alpha\rho_j(t)|
	\leq C_1h_1^{|\alpha|}\alpha!^s,
	\qquad\forall\,\alpha\in\mathbb{N}_0^n.
	\]

	Also, we have on $V$ that
	\[\theta_j = \mathrm{d}(\psi_j\circ\Pi^{-1}) = \sum_{\nu=1}^{n}\partial_{t_\nu}(\psi_j\circ\Pi^{-1})\mathrm{d}t_\nu.\]
	
	The boundedness of $\{\rho_j\}_{j\in\mathbb{N}}$ also implies the boundedness of the sequence $\{|\xi^{(j)}|^{-1}\theta_j\}_{j\in\mathbb{N}}$. 
	Hence, by Lemma \ref{lemma_ADL}, there exist $C_2,h_2>0$ such that
	\[
	\sup_{t\in K}|\partial_t^\alpha e^{i\psi_j\circ\Pi^{-1}(t)}| = \sup_{t\in K}|\partial_t^\alpha e^{i|\xi^{(j)}|(|\xi^{(j)}|^{-1}\psi_j\circ\Pi^{-1}(t))}| \leq C_2h_2^{|\alpha|}\alpha!^s e^{\frac{\varepsilon}{2}| \xi^{(j)}|^{\frac{1}{s}}},
	\]
	for all $\alpha \in \mathbb{N}_0^n.$
	
	By Leibniz rule,
	\[
	\sup_{t\in K}|\partial_t^\alpha(e^{i\psi_j\circ\Pi^{-1}}\rho_j)|
	\leq C_3h_3^{|\alpha|}\alpha!^s e^{\frac{\varepsilon}{2}|\xi^{(j)}|^{\frac{1}{s}}},
	\]
	for some $C_3,h_3>0$. Therefore,
	\[
	\sup_{t\in K}|\partial_t^{\alpha}\widehat{f}_\xi|
	\leq C_3h_3^{|\alpha|}\alpha!^s e^{-\frac{\varepsilon}{2}|\xi^{(j)}|^{\frac{1}{s}}},
	\]
	which implies that $f\in\mathsf{\Lambda}^1\mathscr{G}^s(M\times\mathbb{T}^m)$. 
	Hence $\mathbb{L}$ is not $s$-globally hypoelliptic.
	
	\medskip

	Now suppose that $\boldsymbol{\omega}$ is neither rational nor $s$-exponential Liouville and let 
	$u\in\mathscr{D}'(M\times\mathbb{T}^m)$ satisfy 
	$\mathbb{L}u=f\in\mathsf{\Lambda}^1\mathscr{G}^s(M\times\mathbb{T}^m)$. 
	Then, for each $\xi\in\mathbb{Z}^m$,
	\[
	\widehat{f}_\xi = \mathrm{d}\widehat{u}_\xi + i(\xi\cdot \boldsymbol{\omega}) \widehat{u}_\xi.
	\]
	
	For each $k$, let $\psi_k\in C^\infty(\hat M)$ be such that 
	$\mathrm{d}\psi_k = \Pi^*\omega_k$, and write 
	$\boldsymbol{\psi}=(\psi_1,\dots,\psi_m)$. 
	Then
	\begin{equation}\label{deuef_psi}
	\mathrm{d}(e^{i\xi\cdot\boldsymbol{\psi}}\Pi^*\widehat{u}_\xi)
	=	e^{i\xi\cdot\boldsymbol{\psi}}\Pi^*\widehat{f}_\xi.
	\end{equation}

	Fix $t_0\in M$ and choose a coordinate neighbourhood $B$ of $t_0$
	diffeomorphic to an open ball, small enough so that
	$\Pi:\hat B\to B$ is an analytic diffeomorphism
	for some open set $\hat B\subset\hat M$.
	
	Let $\boldsymbol{\phi} 	= (\psi_1\circ\Pi^{-1},\dots,\psi_m\circ\Pi^{-1})$ on $B$. Then \eqref{deuef_psi} becomes, on $B$,
	\begin{equation}\label{deuef_local}
		\mathrm{d}	\big( e^{i\xi\cdot\boldsymbol{\phi}} \widehat{u}_\xi
		\big) = e^{i\xi\cdot\boldsymbol{\phi}} 	\widehat{f}_\xi.
	\end{equation}
	
	Given \(t\in B\), let \([t_0,t]\) be a smooth path in \(B\) joining \(t_0\) to \(t\).
	Integrating \eqref{deuef_local} along \([t_0,t]\), we obtain
	\begin{equation*}
		\widehat{u}_\xi(t)
		=
		e^{i\xi\cdot (\boldsymbol{\phi}(t_0)-\boldsymbol{\phi}(t))}\,\widehat{u}_\xi(t_0)
		+
		e^{-i\xi\cdot\boldsymbol{\phi}(t_0)}
		\int_{t_0}^{t} e^{i\xi\cdot\boldsymbol{\phi}}\,\widehat{f}_\xi .
	\end{equation*}
	
	We first derive estimates for the coefficients \(\widehat{u}_\xi(t_0)\).
	By the Hurewicz Theorem, we may choose the generators \(\sigma_\ell\) to be smooth loops based at \(t_0\).
	Fix a point \(Q_0\in \Pi^{-1}(t_0)\subset \hat M\).
	For each \(\ell=1,\dots,d\), let \(\hat\sigma_\ell:[0,2\pi]\to \hat M\) be the lift of \(\sigma_\ell\) satisfying \(\hat\sigma_\ell(0)=Q_0\),
	and set \(Q_\ell:=\hat\sigma_\ell(2\pi)\).
	Using \eqref{deuef_psi} and Stokes' theorem, we have
	\begin{equation*}
		\int_{\hat\sigma_\ell} e^{i\xi\cdot\boldsymbol{\psi}}\,\Pi^*\widehat{f}_\xi
		= \int_{\hat\sigma_\ell} \mathrm{d}\!\left(e^{i\xi\cdot\boldsymbol{\psi}}\,\widehat{u}_\xi\right)
		= \left(e^{i\xi\cdot\boldsymbol{\psi}(Q_\ell)} - e^{i\xi\cdot\boldsymbol{\psi}(Q_0)}\right)\widehat{u}_\xi(t_0),
	\end{equation*}
	and therefore
	\begin{equation}\label{u_t0_loop}
		\bigl(1-e^{i\xi\cdot(\boldsymbol{\psi}(Q_0)-\boldsymbol{\psi}(Q_\ell))}\bigr)\,\widehat{u}_\xi(t_0)
		= e^{-i\xi\cdot\boldsymbol{\psi}(Q_\ell)} \int_{\hat\sigma_\ell} e^{i\xi\cdot\boldsymbol{\psi}}\,\Pi^*\widehat{f}_\xi .
	\end{equation}
	
	Next, we compute the increments of \(\boldsymbol{\psi}\) along the lifts:
	for each \(k=1,\dots,m\),
	\[
	\psi_k(Q_\ell)-\psi_k(Q_0) = \int_{\hat\sigma_\ell}\mathrm{d}\psi_k
	= \int_{\sigma_\ell}\omega_k
	= 2\pi\,A(\boldsymbol{\omega})_{\ell k}.
	\]
	Consequently, for \(\xi\in\mathbb{Z}^m\),
	\[
	A(\boldsymbol{\omega})\xi
	= \frac1{2\pi}\Bigl( \xi\cdot\bigl[\boldsymbol{\psi}(Q_1)-\boldsymbol{\psi}(Q_0)\bigr],
	\,\dots,\, \xi\cdot\bigl[\boldsymbol{\psi}(Q_d)-\boldsymbol{\psi}(Q_0)\bigr]
	\Bigr).
	\]
	
	Since \(\boldsymbol{\omega}\) is not \(s\)-exponential Liouville, condition \eqref{DCs} fails by Proposition~\ref{liou_DCs}.
	Hence, by Lemma~\ref{est_DCs}, for every \(\varepsilon>0\) there exist \(C_\varepsilon>0\) and some \(\ell\in\{1,\dots,d\}\) such that
	\begin{equation}\label{est_exp}
		\bigl|1-e^{i\xi\cdot(\boldsymbol{\psi}(Q_0)-\boldsymbol{\psi}(Q_\ell))}\bigr|
		\geq C_\varepsilon\,e^{-\varepsilon|\xi|^{\frac1s}},
		\qquad \forall\,\xi\in\mathbb{Z}^m\setminus\{0\}.
	\end{equation}
	
	In particular, the left-hand side of \eqref{est_exp} does not vanish.
	Hence, by \eqref{u_t0_loop} and \eqref{est_exp},
	\begin{equation*}
		|\widehat{u}_\xi(t_0)|
		=
		\dfrac{|e^{-i\xi\cdot\boldsymbol{\psi}(Q_\ell)}|}{|1-e^{i\xi\cdot(\boldsymbol{\psi}(Q_0)-\boldsymbol{\psi}(Q_\ell))}|}
		\left|\int_{\hat\sigma_\ell}e^{i\xi\cdot\boldsymbol{\psi}}\Pi^*\widehat{f}_\xi\right|
		\leq
		C_\varepsilon^{-1}e^{\varepsilon|\xi|^{\frac{1}{s}}}
		\left|\int_{\hat\sigma_\ell}e^{i\xi\cdot\boldsymbol{\psi}}\Pi^*\widehat{f}_\xi\right|.
	\end{equation*}
	
	Since \(\sigma_\ell\) is compact, let \(\{U_i\}_{i\in I}\) be a finite family of coordinate domains that cover \(\sigma_\ell\) and,
	for each \(i\in I\), take \(K_i\Subset U_i\) whose interiors still cover \(\sigma_\ell\).
	Also, consider a partition
	\[
	0=\tau_0<\tau_1<\cdots<\tau_L=2\pi
	\]
	such that, for each \(r=1,\dots,L\), the segment \(\sigma_\ell([\tau_{r-1},\tau_r])\) is entirely contained in the interior of some \(K_i\),
	with \(i\) depending on \(r\).
	If \((t_1,\dots,t_n)\) are local coordinates on \(K_i\), we have on \([\tau_{r-1},\tau_r]\subset\mathbb{R}\) that
	\[
	\sigma_\ell^*\widehat{f}_\xi
	= \sum_{\nu=1}^{n}((\widehat{f_\nu})_\xi\circ\sigma_\ell)\,\mathrm{d}(t_\nu\circ\sigma_\ell)
	= \sum_{\nu=1}^{n}((\widehat{f_\nu})_\xi\circ\sigma_\ell)\,G_\nu\,\mathrm{d}\tau,
	\]
	for some continuous functions \(G_\nu\) on \([\tau_{r-1},\tau_r]\), $\nu=1,\dots,n$. Hence,
	\[
	\int_{\tau_{r-1}}^{\tau_r} e^{i\xi\cdot\boldsymbol{\psi}\circ\hat\sigma_\ell}
	\,\sigma_\ell^*\widehat{f}_\xi 
	= \sum_{\nu=1}^{n}\int_{\tau_{r-1}}^{\tau_r} e^{i\xi\cdot \boldsymbol{\psi} \circ\hat\sigma_\ell} 	(\widehat{f_\nu})_\xi (\sigma_\ell(\tau)) \,G_\nu(\tau) \,\mathrm{d}\tau.
	\]

	Since \(f\in\mathsf{\Lambda}^1\mathscr{G}^s(M\times\mathbb{T}^m)\), there exist \(C,\delta>0\) independent of \(r\) such that
	\[
	\sup_{1\leq \nu\leq n}\sup_{t\in K_i}|(\widehat{f_\nu})_\xi(t)| \leq Ce^{-\delta|\xi|^{\frac{1}{s}}}.
	\]
	
	Then, we have
	\[
	\left|\int_{\tau_{r-1}}^{\tau_r}
	e^{i\xi\cdot\boldsymbol{\psi}\circ\hat\sigma_\ell}\,\sigma_\ell^*\widehat{f}_\xi\right| \leq \sum_{\nu=1}^n\int_{\tau_{r-1}}^{\tau_r} |(\widehat{f_\nu})_\xi (\sigma_\ell(\tau))|\,|G_\nu(\tau)|\,\mathrm{d}\tau
	\leq
	C'e^{-\delta|\xi|^{\frac{1}{s}}},
	\]
	for some \(C'>0\) independent of \(\xi\in\mathbb{Z}^m\) and \(r\in\{1,\dots,L\}\).
	Finally,
	\begin{align*}
		\left|\int_{\hat{\sigma}_\ell}e^{i\xi\cdot \boldsymbol{\psi}}\Pi^*\widehat{f}_\xi\right|
		& \leq 	\sum_{r=1}^L \left|\int_{\tau_{r-1}}^{\tau_r} e^{i\xi\cdot \boldsymbol{\psi}\circ\hat\sigma_\ell}\,\sigma_\ell^*\widehat{f}_\xi\right|
		\leq C''e^{-\delta|\xi|^{\frac{1}{s}}},
	\end{align*}
	where \(C''=C'L>0\).
	Choosing \(\varepsilon=\delta/2\), we obtain
	\begin{equation*}
		|\widehat{u}_\xi(t_0)| 	\leq C_0e^{-\frac{\delta}{2}|\xi|^{\frac{1}{s}}},
		\qquad\forall\,\xi\in\mathbb{Z}^m,
	\end{equation*}
	where \(C_0=C''C_\varepsilon^{-1}>0\).

	Now, we can estimate the derivatives of \(\widehat{u}_\xi(t)\).
	Given \(K\Subset B\), Lemma \ref{lemma_ADL} yields
	\begin{equation}\label{est_eu}
		\left|\partial_t^\alpha\!\left(e^{i\xi\cdot (\boldsymbol{\phi}(t_0)-\boldsymbol{\phi}(t))}\widehat{u}_\xi(t_0)\right)\right|
		\leq C_0e^{-\frac{\delta}{2}|\xi|^{\frac{1}{s}}}\,|\partial_t^\alpha e^{i\xi\cdot\boldsymbol{\phi}(t)}|
		\leq C_1h_1^{|\alpha|}\alpha!^s e^{-\frac{\delta}{4}|\xi|^{\frac{1}{s}}},
	\end{equation}
	for all \(\alpha\in\mathbb{N}_0^n\), \(t\in K\), and \(\xi\in\mathbb{Z}^m\), with \(C_1,h_1>0\).
	
	For the second term of \(\widehat{u}_\xi\), we can write
	\[
	e^{-i\xi\cdot\boldsymbol{\phi}(t_0)}\int_{t_0}^t e^{i\xi\cdot\boldsymbol{\phi}}\widehat{f}_\xi
	= e^{-i\xi\cdot\boldsymbol{\phi}(t_0)}F_\xi,
	\]
	where \(F_\xi\) is such that \(\mathrm{d}F_\xi = e^{i\xi\cdot\boldsymbol{\phi}}\widehat{f}_\xi\).
	
	Since \(f\in\mathsf{\Lambda}^1\mathscr{G}^s(M\times\mathbb{T}^m)\), there exist \(C_2,h_2,\delta_0>0\) such that
	\begin{equation}\label{est_f} 
		\sup_{t\in K}|\partial_t^\alpha \widehat{f}_\xi(t)|
		\leq C_2h_2^{|\alpha|}\alpha!^s e^{-\delta_0|\xi|^{\frac{1}{s}}},\qquad\forall\,\xi\in\mathbb{Z}^m,\ \forall\,\alpha\in\mathbb{N}_0^n.
	\end{equation}
	
	Equation \eqref{est_f} and Lemma \ref{lemma_ADL} imply that there exist \(h_3>0\) such that, for all \(\alpha\in\mathbb{N}_0^n\) and \(\xi\in\mathbb{Z}^m\),
	\begin{align*}
		|\partial_t^\alpha \mathrm{d}F_\xi|
		&\leq \sum_{\beta\leq\alpha}\binom{\alpha}{\beta}
		|\partial_t^{\alpha-\beta}e^{i\xi\cdot\boldsymbol{\phi}}|\,
		|\partial_t^{\beta}\widehat{f}_\xi|\\
		&\leq
		\sum_{\beta\leq\alpha}\binom{\alpha}{\beta}
		h_3^{|\alpha|-|\beta|}(\alpha-\beta)!^s e^{\frac{\delta_0}{2}|\xi|^{\frac{1}{s}}}
		\, C_2h_2^{|\beta|}\beta!^s e^{-\delta_0|\xi|^{\frac{1}{s}}}\\
		&\leq C_3h_4^{|\alpha|}\alpha!^s e^{-\frac{\delta_0}{2}|\xi|^{\frac{1}{s}}},
	\end{align*}
	for certain \(C_3,h_4>0\).
	Hence, we can find \(C_4,h_5>0\) such that
	\[
	\sup_{t\in K}|\partial_t^\alpha F_\xi(t)|
	\leq C_4h_5^{|\alpha|}\alpha!^s e^{-\frac{\delta_0}{2}|\xi|^{\frac{1}{s}}},\qquad\forall\,\xi\in\mathbb{Z}^m,\ \forall\,\alpha\in\mathbb{N}_0^n.
	\]
	
	Hence, we can apply the same ideas to obtain \(C_5,h_6>0\) such that
	\begin{equation}\label{est_eF}
		\sup_{t\in K}\bigl|\partial_t^\alpha(e^{-i\xi\cdot\boldsymbol{\phi}(t)}F_\xi(t))\bigr|
		\leq
		C_5h_6^{|\alpha|}\alpha!^s e^{-\frac{\delta_0}{2}|\xi|^{\frac{1}{s}}},\qquad\forall\,\xi\in\mathbb{Z}^m,\ \forall\,\alpha\in\mathbb{N}_0^n.
	\end{equation}
	
	Hence, combining \eqref{est_eu} and \eqref{est_eF}, we obtain \(C,h,\varepsilon>0\) such that
	\[
	\sup_{t\in K}|\partial_t^\alpha\widehat{u}_\xi(t)|
	\leq Ch^{|\alpha|}\alpha!^s e^{-\varepsilon|\xi|^{\frac{1}{s}}},\qquad\forall\,\xi\in\mathbb{Z}^m,\ \forall\,\alpha\in\mathbb{N}_0^n.
	\]
	
	Now, given any \(K\Subset M\), we can cover \(K\) with finitely many open coordinate patches \(B\) such that
	\(\Pi:\hat{B}\to B\) is a diffeomorphism, so that the previous estimates still hold on each $B$.
	This shows that \(u\in\mathscr{G}^s(M\times\mathbb{T}^m)\), which implies the \(s\)-global hypoellipticity of \(\mathbb{L}\).
\end{proof}

	The proof of the previous theorem follows the same strategy as in \cite[Theorem 3.4]{ADL2023gh}. By adapting those arguments, we also obtain a characterization of the global hypoellipticity of \(\mathbb{L}\) on Beurling-type Gevrey spaces:
	
	\begin{theorem}\label{gh_beurling}
		Fix \(s>1\) and let \(\boldsymbol{\omega}=(\omega_1,\dots,\omega_m)\) be a family of real-valued closed \(1\)-forms in \(\mathsf{\Lambda}^1\mathscr{G}^{(s)}_{\partial M}(M)\).
		The operator \(\mathbb{L}\) is \((s)\)-globally hypoelliptic if and only if the family \(\boldsymbol{\omega}\) is neither rational nor \((s)\)-exponential Liouville.
	\end{theorem}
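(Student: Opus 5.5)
We mirror the proof of Theorem~\ref{gh_roumieu}, swapping quantifiers to fit the Beurling topology. The key fact is the Fourier characterization from the Appendix: $u\in\mathscr{G}^{(s)}(M\times\mathbb{T}^m)$ if and only if, for every $K\Subset M$, every $h>0$ and every $\varepsilon>0$, there is $C>0$ with
\[
\sup_{t\in K}|\partial_t^\alpha \widehat{u}_\xi(t)|\le C h^{|\alpha|}\alpha!^s e^{-\varepsilon|\xi|^{1/s}}
\]
for all $\alpha$ and $\xi$. The same holds for $1$-forms. The proof has two directions.

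\emph{Necessity.} If $\boldsymbol{\omega}$ is rational, Proposition~\ref{prop_rat} already gives the conclusion. Suppose $\boldsymbol{\omega}$ is $(s)$-exponential Liouville, with sequences $\xi^{(j)}$ and $\theta_j$ as in the definition. Take $\psi_j$ on $\hat M$ with $\mathrm{d}\psi_j=\Pi^*\theta_j$; by Remark~\ref{hatM_analytic}, $e^{i\psi_j}\in\mathscr{G}^{(s)}(M)$. Define $u=\sum_j e^{i\psi_j}e^{-i\xi^{(j)}\cdot x}$. This $u$ is a distribution but not smooth, since its Fourier coefficients have modulus one. Its image is $\widehat{(\mathbb{L}u)}_{\xi^{(j)}}=-ie^{-\varepsilon'|\xi^{(j)}|^{1/s}}e^{i\psi_j}\rho_j^{(\varepsilon')}$ for every $\varepsilon'>0$.

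Fix $\varepsilon>0$, $h>0$ and $K$ in a chart $V$ where $\Pi$ is invertible. The estimates follow from three facts:
\begin{itemize}
\item Boundedness of $\rho_j^{(1)}$ in $\mathsf{\Lambda}^1\mathscr{G}^{(s)}(M)$ gives a uniform Beurling bound, for every $\tilde h$, on $\theta_j/|\xi^{(j)}|$ and hence on the gradient of $\psi_j\circ\Pi^{-1}/|\xi^{(j)}|$.
\item Lemma~\ref{lemma_ADL_beurling} then gives $|\partial^\alpha e^{i\psi_j\circ\Pi^{-1}}|\le (h/2)^{|\alpha|}\alpha!^s e^{\varepsilon_0|\xi^{(j)}|^{1/s}}$ with some $\varepsilon_0=\varepsilon_0(h)$.
\item Applying Leibniz with $\rho_j^{(\varepsilon+\varepsilon_0)}$, which is bounded for every $h$, yields the bound $C h^{|\alpha|}\alpha!^s e^{-\varepsilon|\xi^{(j)}|^{1/s}}$.
\end{itemize}
One subtlety needs care. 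Lemma~\ref{lemma_ADL_beurling} must be applied uniformly to the family $\psi_j\circ\Pi^{-1}/|\xi^{(j)}|$, which means its constant $C$ must be uniform in $j$. The normalization $\psi_j(P_0)\in[0,2\pi)$ can be chosen, so only derivatives matter. We therefore need to check that the lemma's proof depends only on derivative bounds of order $\ge1$, after absorbing a constant phase. This shows $\mathbb{L}u\in\mathsf{\Lambda}^1\mathscr{G}^{(s)}$, so $\mathbb{L}$ is not $(s)$-globally hypoelliptic.

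\emph{Sufficiency.} Assume $\boldsymbol{\omega}$ is neither rational nor $(s)$-exponential Liouville. By Proposition~\ref{liou_DCs}(ii), $A(\boldsymbol{\omega})$ satisfies \eqref{DCss}, and Lemma~\ref{est_DCss} gives fixed $\varepsilon_1,C>0$ with $\max_\ell|1-e^{2\pi i a_\ell\cdot\xi}|\ge Ce^{-\varepsilon_1|\xi|^{1/s}}$.

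Repeat the loop argument at $t_0$. Since $f\in\mathsf{\Lambda}^1\mathscr{G}^{(s)}$, we have $|\widehat f_\xi|\le C_\delta e^{-\delta|\xi|^{1/s}}$ for every $\delta$. Hence $|\widehat u_\xi(t_0)|\le C'_\delta e^{-(\delta-\varepsilon_1)|\xi|^{1/s}}$ for all $\delta$, which is rapid decay of every exponential order. This step is easier than in the Roumieu case because the loss $\varepsilon_1$ is fixed while $\delta$ is arbitrary.

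For derivatives, fix $h,\varepsilon$. Apply Lemma~\ref{lemma_ADL_beurling} to $\boldsymbol{\phi}$ with $h/2$, obtaining some $\varepsilon_0(h)$. Then choose $\delta$ large, namely $\delta>\varepsilon+\varepsilon_0+\varepsilon_1$, in the bounds for $\widehat u_\xi(t_0)$ and for $\partial^\beta\widehat f_\xi$ taken with parameter $h/2$. Leibniz then gives the estimate for the term $e^{i\xi\cdot(\boldsymbol{\phi}(t_0)-\boldsymbol{\phi})}\widehat u_\xi(t_0)$.

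The primitive term $e^{-i\xi\cdot\boldsymbol{\phi}}F_\xi$ needs more work. We bound $F_\xi$ by integrating along radial paths in the ball $B$; derivatives of order $\ge1$ come from $\mathrm{d}F_\xi$ directly. We then multiply by $e^{-i\xi\cdot\boldsymbol{\phi}}$, again paying $e^{\varepsilon_0|\xi|^{1/s}}$ with $\delta$ absorbing it. The main obstacle is this bookkeeping: making sure every constant $h$ in the Leibniz sums ($\binom{\alpha}{\beta}(\alpha-\beta)!^s\beta!^s\le\alpha!^s$ with geometric factor $2^{|\alpha|}$) can be made arbitrarily small. Since each input can be taken with arbitrary $h$, this is done by halving at each step. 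Covering compact sets by finitely many such balls then gives $u\in\mathscr{G}^{(s)}(M\times\mathbb{T}^m)$.
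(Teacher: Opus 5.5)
Your proposal is correct and follows essentially the same route as the paper. For necessity, both use the counterexample $u=\sum_j e^{i\psi_j}e^{-i\xi^{(j)}\cdot x}$, Lemma~\ref{lemma_ADL_beurling} and the shift $\varepsilon\mapsto\varepsilon+\varepsilon_0$. For sufficiency, both use the loop argument at $t_0$ via \eqref{DCss} and Lemma~\ref{est_DCss}, followed by Gevrey estimates for the two terms of $\widehat u_\xi$.

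The points you flag are details the paper passes over, and your treatment of them is sound: the uniformity in $j$ when applying Lemma~\ref{lemma_ADL_beurling}, the $2^{|\alpha|}$ factor in the Leibniz sums, and the fact that \eqref{DCss} \emph{holds} (the paper says it does not) in the sufficiency part.
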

	
	\begin{proof}
		If \(\boldsymbol{\omega}\) is rational, then \(\mathbb{L}\) is not \((s)\)-globally hypoelliptic by Proposition~\ref{prop_rat}.
		Now assume that \(\boldsymbol{\omega}\) is \((s)\)-exponential Liouville.
		Then there exist a sequence \(\{\xi^{(j)}\}_{j\in\mathbb{N}}\) in \(\mathbb{Z}^m\) with \(|\xi^{(j)}|\to\infty\) and a sequence \(\{\theta_j\}_{j\in\mathbb{N}}\) of integral \(1\)-forms in \(\mathsf{\Lambda}^1\mathscr{G}^{(s)}(M)\) such that, for every \(\varepsilon>0\), the sequence
		\[
		\bigl\{e^{\varepsilon|\xi^{(j)}|^{\frac{1}{s}}}\,(\xi^{(j)}\cdot\boldsymbol{\omega}-\theta_j)\bigr\}_{j\in\mathbb{N}}
		\]
		is bounded in \(\mathsf{\Lambda}^1\mathscr{G}^{(s)}(M)\).
		
		For each \(j\in\mathbb{N}\), let \(\psi_j\in C^\infty(\hat M)\) satisfy \(\mathrm{d}\psi_j=\Pi^*\theta_j\).
		Moreover, each \(e^{i\psi_j}\) belongs to \(\mathscr{G}^{(s)}(M)\) by Remark~\ref{hatM_analytic}.
		Set
		\[
		u=\sum_{j=1}^{\infty} e^{i\psi_j}e^{-i\xi^{(j)}\cdot x}.
		\]
		
		As in the proof of Theorem~\ref{gh_roumieu}, we have
		\(u\in \mathscr{D}' (M\times\mathbb{T}^m) \setminus \mathscr{G}^{(s)} (M\times\mathbb{T}^m) \) and
		\[
		\mathbb{L}u=f=-i\sum_{j=1}^\infty (\xi^{(j)}\cdot\boldsymbol{\omega}-\theta_j)e^{i\psi_j}e^{-i\xi^{(j)}\cdot x}.
		\]

		Then it is enough to show that \(f\in\mathsf{\Lambda}^1\mathscr{G}^{(s)}(M\times\mathbb{T}^m)\).
		Fix \(\varepsilon>0\) and set
		\[
		\rho_j^{(\varepsilon)} = e^{\varepsilon|\xi^{(j)}|^{\frac{1}{s}}}\,(\xi^{(j)}\cdot\boldsymbol{\omega}-\theta_j).
		\]
		
		Then
		\[
		\widehat{f}_\xi =
		\begin{cases}
			-ie^{-\varepsilon|\xi^{(j)}|^{\frac{1}{s}}}e^{i\psi_j}\rho_j^{(\varepsilon)}, & \text{if } \xi=\xi^{(j)},\\
			0, & \text{otherwise}.
		\end{cases}
		\]
		
		Let \((V,t)\) be a local chart small enough so that \(\Pi:\hat V\to V\) is an analytic diffeomorphism for some open subset \(\hat V\subset\hat M\),
		and let \(K\Subset V\).
		Since \(\{\rho_j^{(\varepsilon)}\}_{j\in\mathbb{N}}\) is bounded in \(\mathsf{\Lambda}^1\mathscr{G}^{(s)}(M)\), for every \(h>0\) there exists \(C_1>0\) such that
		\begin{equation}\label{est_rhoj_b}
			\sup_{t\in K}|\partial_t^\alpha\rho_j^{(\varepsilon)}(t)|
			\leq C_1h^{|\alpha|}\alpha!^s,
			\qquad \forall\, \alpha\in\mathbb{N}_0^n.
		\end{equation}
		
		Moreover, on \(V\) we have
		\[
		\theta_j = \mathrm{d}(\psi_j\circ\Pi^{-1})
		= \sum_{\nu=1}^{n}\partial_{t_\nu}(\psi_j\circ\Pi^{-1})\,\mathrm{d}t_\nu.
		\]
		
		Notice that the boundedness of \(\{\rho_j^{(\varepsilon)}\}_{j\in\mathbb{N}}\) implies the boundedness of \(\{|\xi^{(j)}|^{-1}\theta_j\}_{j\in\mathbb{N}}\) in \(\mathsf{\Lambda}^1\mathscr{G}^{(s)}(M)\).
		Hence, applying the scalar version of Lemma~\ref{lemma_ADL_beurling}, we obtain that, for every \(h>0\), there exists \(\varepsilon_0>0\) such that
		\begin{equation}\label{est_eipsi_b}
			\sup_{t\in K}\bigl|\partial_t^\alpha e^{i\psi_j\circ\Pi^{-1}(t)}\bigr|
			=
			\sup_{t\in K}\bigl|\partial_t^\alpha e^{i|\xi^{(j)}|\,(|\xi^{(j)}|^{-1}\psi_j\circ\Pi^{-1}(t))}\bigr|
			\leq h^{|\alpha|}\alpha!^s e^{\varepsilon_0|\xi^{(j)}|^{\frac{1}{s}}},
			\qquad \forall\, \alpha\in\mathbb{N}_0^n.
		\end{equation}
		
		Now, by the Leibniz rule and the estimates \eqref{est_rhoj_b} and \eqref{est_eipsi_b}, for every \(h>0\) we can find \(C>0\) such that
		\[
		\sup_{t\in K}\bigl|\partial_t^\alpha\bigl(e^{i\psi_j\circ\Pi^{-1}}\,\rho_j^{(\varepsilon)}\bigr)(t)\bigr|
		\leq
		Ch^{|\alpha|}\alpha!^s e^{\varepsilon_0|\xi^{(j)}|^{\frac{1}{s}}},
		\qquad \forall\, \alpha\in\mathbb{N}_0^n.
		\]
		
		Given \(\varepsilon>0\), if we replace \(\varepsilon\) by \(\varepsilon+\varepsilon_0\) in the definition of \(\rho_j^{(\varepsilon)}\), the previous estimate yields
		\[
		\sup_{t\in K}\bigl|\partial_t^{\alpha}\widehat{f}_\xi(t)\bigr|
		\leq
		Ch^{|\alpha|}\alpha!^s e^{-\varepsilon|\xi^{(j)}|^{\frac{1}{s}}},
		\qquad \forall\, \alpha\in\mathbb{N}_0^n.
		\]
		
		Therefore \(f\in\mathsf{\Lambda}^1\mathscr{G}^{(s)}(M\times\mathbb{T}^m)\), and hence \(\mathbb{L}\) is not \((s)\)-globally hypoelliptic.
		
		Now, suppose that \(\boldsymbol{\omega}\) is neither rational nor \((s)\)-exponential Liouville, and let
		\(u\in\mathscr{D}'(M\times\mathbb{T}^m)\) satisfy
		\(\mathbb{L}u=f\in\mathsf{\Lambda}^{0,1}\mathscr{G}^{(s)}(M\times\mathbb{T}^m)\).
		Then, for each \(\xi\in\mathbb{Z}^m\),
		\[
		\widehat{f}_\xi = \mathrm{d}\widehat{u}_\xi+i(\xi\cdot\boldsymbol{\omega})\,\widehat{u}_\xi.
		\]
		
		For each \(k=1,\dots,m\), let \(\psi_k\in C^\infty(\hat M)\) be such that \(\mathrm{d}\psi_k = \Pi^*\omega_k\).
		Denoting \(\boldsymbol{\psi}=(\psi_1,\dots,\psi_m)\), we obtain, for every \(\xi\in\mathbb{Z}^m\),
		\begin{equation}\label{deuef_psi_b}
			\mathrm{d}\!\left(e^{i\xi\cdot\boldsymbol{\psi}}\,\Pi^*\widehat{u}_\xi\right)
			=
			e^{i\xi\cdot\boldsymbol{\psi}}\,\Pi^*\widehat{f}_\xi.
		\end{equation}
		
		Fix \(t_0\in M\) and let \((B,t)\) be a local chart at \(t_0\) such that \(B\) is diffeomorphic to an open ball and small enough so that
		\(\Pi:\hat B\to B\) is an analytic diffeomorphism for some open set \(\hat B\subset \hat M\).
		If we set \(\boldsymbol{\phi}=(\psi_1\circ\Pi^{-1},\dots,\psi_m\circ\Pi^{-1})\) on \(B\), then for each \(\xi\in\mathbb{Z}^m\) we obtain
		\begin{equation}\label{deuef_b}
			\mathrm{d}(e^{i\xi\cdot\boldsymbol{\phi}}\widehat{u}_\xi) = e^{i\xi\cdot\boldsymbol{\phi}}\widehat{f}_\xi.
		\end{equation}
		
		Given \(t\in B\), let \([t_0,t]\) be a path joining \(t_0\) to \(t\).
		Integrating \eqref{deuef_b} over \([t_0,t]\) yields
		\begin{equation}\label{uxit0_b}
			\widehat{u}_\xi(t)
			= e^{i\xi\cdot (\boldsymbol{\phi}(t_0) - \boldsymbol{\phi}(t))}\widehat{u}_\xi(t_0) + e^{-i\xi\cdot\boldsymbol{\phi} (t_0)}\int_{t_0}^t e^{i\xi\cdot\boldsymbol{\phi}}\widehat{f}_\xi.
		\end{equation}
		
		First, let us estimate the sequence \(\{\widehat{u}_\xi(t_0)\}\).
		As in the previous theorem, the Hurewicz Theorem allows us to choose the loops \(\sigma_\ell\) to be smooth and based at \(t_0\).
		Fix \(Q_0\in \Pi^{-1}(t_0)\subset \hat M\).
		For each \(\ell=1,\dots,d\), let \(\hat\sigma_\ell:[0,2\pi]\to\hat M\) be the lift of \(\sigma_\ell\) such that
		\(\hat\sigma_\ell(0)=Q_0\), and set \(Q_\ell:=\hat\sigma_\ell(2\pi)\).
		By \eqref{deuef_psi_b} and Stokes' Theorem, we obtain
		\begin{equation*}
			\int_{\hat\sigma_\ell}e^{i\xi\cdot\boldsymbol{\psi}}\,\Pi^*\widehat{f}_\xi
			= \int_{\hat\sigma_\ell} \mathrm{d}\!\left(e^{i\xi\cdot\boldsymbol{\psi}} \,\Pi^*\widehat{u}_\xi\right)
			= \left(e^{i\xi\cdot\boldsymbol{\psi} (Q_\ell)}-e^{i\xi\cdot\boldsymbol{ \psi}(Q_0)}\right)\widehat{u}_\xi(t_0),
		\end{equation*}
		which yields
		\begin{equation*}
			\widehat{u}_\xi(t_0)
			= e^{i\xi\cdot (\boldsymbol{\psi}(Q_0) - \boldsymbol{\psi}(Q_\ell))} \,\widehat{u}_\xi(t_0) + e^{-i\xi\cdot\boldsymbol{\psi}(Q_\ell)}
			\int_{\hat\sigma_\ell}e^{i\xi\cdot\boldsymbol{\psi}}\,\Pi^*\widehat{f}_\xi.
		\end{equation*}
		
		Now, since
		\[
		\psi_k(Q_\ell)-\psi_k(Q_0) = \int_{\hat\sigma_\ell}\mathrm{d}\psi_k
		= \int_{\sigma_\ell}\omega_k = 2\pi A(\boldsymbol{\omega})_{\ell k},
		\]
		we obtain
		\[
		A(\boldsymbol{\omega})\xi
		= \dfrac{1}{2\pi}\left( \xi\cdot [\boldsymbol{\psi}(Q_1) - \boldsymbol{\psi}(Q_0)],\cdots, \xi\cdot  [\boldsymbol{\psi}(Q_d) - \boldsymbol{\psi}(Q_0)] \right),  \qquad \xi\in\mathbb{Z}^m.
		\]
		
		Since \(\boldsymbol{\omega}\) is not \((s)\)-exponential Liouville, \eqref{DCss} does not hold by Proposition~\ref{liou_DCs}.
		Hence, by Lemma~\ref{est_DCss}, there exist \(\varepsilon_0, C>0\) and \(\ell\in\{1,\dots,d\}\) such that
		\begin{equation}\label{est_exp_b}
			|1-e^{i\xi\cdot(\boldsymbol{\psi}(Q_0)-\boldsymbol{\psi}(Q_\ell))}|
			\geq
			C e^{-\varepsilon_0|\xi|^{\frac{1}{s}}},
			\qquad\forall\,\xi\in\mathbb{Z}^m\setminus\{0\}.
		\end{equation}
		
		Therefore, using \eqref{est_exp_b} and the identity obtained above for \(\widehat{u}_\xi(t_0)\), we have
		\begin{equation*}
			|\widehat{u}_\xi(t_0)|
			=
			\dfrac{|e^{-i\xi\cdot\boldsymbol{\psi}(Q_\ell)}|}
			{|1-e^{i\xi\cdot(\boldsymbol{\psi}(Q_0)-\boldsymbol{\psi}(Q_\ell))}|}
			\left|\int_{\hat\sigma_\ell}e^{i\xi\cdot\boldsymbol{\psi}}\Pi^*\widehat{f}_\xi\right|
			\leq
			C^{-1}e^{\varepsilon_0|\xi|^{\frac{1}{s}}}
			\left|\int_{\hat\sigma_\ell}e^{i\xi\cdot\boldsymbol{\psi}}\Pi^*\widehat{f}_\xi\right|,
		\end{equation*}
		since the left-hand side of \eqref{est_exp_b} does not vanish.
		
		By the compactness of \(\sigma_\ell\), let \(\{U_i\}_{i\in I}\) be a finite family of coordinate domains that cover \(\sigma_\ell\) and, for each \(i\in I\), choose \(K_i\Subset U_i\) such that the interiors of the \(K_i\) still cover \(\sigma_\ell\).
		Consider also a partition
		\[
		0=\tau_0<\tau_1<\cdots<\tau_L=2\pi
		\]
		of the interval \([0,2\pi]\) such that, for each \(r=1,\dots,L\), the segment \(\sigma_\ell([\tau_{r-1},\tau_r])\) is entirely contained in the interior of some \(K_i\), with \(i\) depending on \(r\).
		If \((t_1,\dots,t_n)\) are local coordinates on \(K_i\), then on \([\tau_{r-1},\tau_r]\subset\mathbb{R}\) we have
		\[
		\sigma_\ell^*\widehat{f}_\xi = \sum_{\nu=1}^{n}((\widehat{f_\nu})_\xi\circ\sigma_\ell) \, \mathrm{d}(t_\nu\circ\sigma_\ell)
		= \sum_{\nu=1}^{n}((\widehat{f_\nu})_\xi\circ\sigma_\ell)\,G_\nu\,\mathrm{d}\tau,
		\]
		for some continuous functions \(G_\nu\) on \([\tau_{r-1},\tau_r]\), $\nu=1,\dots,n$.
		Hence,
		\[
		\int_{\tau_{r-1}}^{\tau_r} 	\textbf{\(e^{i\xi\cdot\boldsymbol{\psi} \circ\hat\sigma_\ell}\)} \,\sigma_\ell^*\widehat{f}_\xi
		= \sum_{\nu=1}^{n}\int_{\tau_{r-1}}^{\tau_r} 		e^{i\xi\cdot\boldsymbol{\psi} \circ\hat\sigma_\ell} (\widehat{f_\nu})_\xi(\sigma_\ell(\tau)) \,G_\nu(\tau)\,\mathrm{d}\tau.
		\]
		
		Since \(f\in\mathsf{\Lambda}^1\mathscr{G}^{(s)}(M\times\mathbb{T}^m)\), for every \(\delta>0\) there exists \(C>0\), independent of \(r\), such that
		\[
		\sup_{1\leq \nu\leq n}\ \sup_{t\in K_i}\bigl|(\widehat{f_\nu})_\xi(t)\bigr|
		\leq
		Ce^{-\delta|\xi|^{\frac{1}{s}}}.
		\]
		
		Then, for each \(\delta>0\),
		\[
		\left|\int_{\tau_{r-1}}^{\tau_r}
		e^{i\xi\cdot\boldsymbol{\psi}\circ\hat\sigma_\ell}\,\sigma_\ell^*\widehat{f}_\xi\right|
		\leq
		\sum_{\nu=1}^n\int_{\tau_{r-1}}^{\tau_r}
		\bigl|(\widehat{f_\nu})_\xi(\sigma_\ell(\tau))\bigr|\,|G_\nu(\tau)|\,\mathrm{d}\tau
		\leq
		C'e^{-\delta|\xi|^{\frac{1}{s}}},
		\]
		for some \(C'>0\) independent of \(\xi\in\mathbb{Z}^m\) and \(r\in\{1,\dots,L\}\).
		Finally,
		\begin{align*}
			\left|\int_{\hat{\sigma}_\ell}e^{i\xi\cdot \boldsymbol{\psi}}\Pi^*\widehat{f}_\xi\right|
			&\leq
			\sum_{r=1}^L
			\left|\int_{\tau_{r-1}}^{\tau_r}
			e^{i\xi\cdot \boldsymbol{\psi}\circ\hat\sigma_\ell}\,\sigma_\ell^*\widehat{f}_\xi\right|
			\leq
			C''e^{-\delta|\xi|^{\frac{1}{s}}},
		\end{align*}
		where \(C''=C'L>0\).
		Then, given \(\varepsilon>0\), if we choose \(\delta = \varepsilon + \varepsilon_0\), we obtain
		\begin{equation}\label{est_uxit0_b}
			|\widehat{u}_\xi(t_0)|
			\leq
			C_0e^{-\varepsilon|\xi|^{\frac{1}{s}}},
			\qquad \forall\,\xi\in\mathbb{Z}^m,
		\end{equation}
		with \(C_0=C''C^{-1}>0\).
		
		Now we estimate the derivatives of \(\widehat{u}_\xi(t)\).
		Given \(K\Subset B\), Lemma~\ref{lemma_ADL_beurling} yields that, for every \(h>0\), there exists \(\widetilde{\varepsilon}>0\) such that
		\[
		\sup_{t\in K}\bigl|\partial_t^\alpha e^{i\xi\cdot\boldsymbol{\phi}(t)}\bigr|
		\leq
		h^{|\alpha|}\alpha!^s e^{\widetilde{\varepsilon}|\xi|^{\frac{1}{s}}},\qquad\forall\,\xi\in\mathbb{Z}^m,\ \forall\,\alpha\in\mathbb{N}_0^n.
		\]
		
		Then, given \(\varepsilon>0\), replacing \(\varepsilon\) by \(\varepsilon+\widetilde{\varepsilon}\) in \eqref{est_uxit0_b}, we obtain
		\begin{equation}\label{est_eu_b}
			\sup_{t\in K} \left|\partial_t^\alpha\!\left(e^{i\xi\cdot (\boldsymbol{\phi}(t_0)-\boldsymbol{\phi}(t))}\widehat{u}_\xi(t_0)\right)\right| \leq C_0h^{|\alpha|} \alpha!^s e^{-\varepsilon|\xi|^{\frac{1}{s}}},\qquad\forall\,\xi\in\mathbb{Z}^m,\ \forall\,\alpha\in\mathbb{N}_0^n.
		\end{equation}
		
		For the second term in \eqref{uxit0_b}, we write
		\[
		e^{-i\xi\cdot\boldsymbol{\phi}(t_0)}\int_{t_0}^t e^{i\xi\cdot\boldsymbol{\phi}}\widehat{f}_\xi
		=
		e^{-i\xi\cdot\boldsymbol{\phi}(t_0)}F_\xi,
		\]
		where \(F_\xi\) satisfies \(\mathrm{d}F_\xi = e^{i\xi\cdot\boldsymbol{\phi}}\widehat{f}_\xi\).
		
		Since \(f\in\mathsf{\Lambda}^1\mathscr{G}^{(s)}(M\times\mathbb{T}^m)\), for every \(\varepsilon,h>0\) there exists \(C_1>0\) such that
		\begin{equation}\label{est_f_b} 
			\sup_{t\in K}|\partial_t^\alpha\widehat{f}_\xi(t)|
			\leq
			C_1h^{|\alpha|}\alpha!^s e^{-\varepsilon|\xi|^{\frac{1}{s}}},\qquad\forall\,\xi\in\mathbb{Z}^m,\ \forall\,\alpha\in\mathbb{N}_0^n.
		\end{equation}
		
		Equation \eqref{est_f_b} and Lemma~\ref{lemma_ADL_beurling} yield, for every \(\varepsilon,h>0\) and for all \(\alpha\in\mathbb{N}_0^n\) and \(\xi\in\mathbb{Z}^m\) that
		\begin{align*}
			|\partial_t^\alpha\mathrm{d}F_\xi|
			&\leq
			\sum_{\beta\leq\alpha}\binom{\alpha}{\beta}
			|\partial_t^{\alpha-\beta}e^{i\xi\cdot\boldsymbol{\phi}}|\,
			|\partial_t^{\beta}\widehat{f}_\xi|\\
			&\leq
			\sum_{\beta\leq\alpha}\binom{\alpha}{\beta}
			h^{|\alpha|-|\beta|}(\alpha-\beta)!^s e^{\varepsilon_0|\xi|^{\frac{1}{s}}}
			\, C_1h^{|\beta|}\beta!^s e^{-(\varepsilon+\varepsilon_0)|\xi|^{\frac{1}{s}}}\\
			&\leq
			C_1h^{|\alpha|}\alpha!^s e^{-\varepsilon|\xi|^{\frac{1}{s}}},
		\end{align*}
		
		Hence, for every \(\varepsilon,h>0\) we can find \(C_2>0\) such that
		\[
		\sup_{t\in K}|\partial_t^\alpha F_\xi(t)|
		\leq
		C_2h^{|\alpha|}\alpha!^s e^{-\varepsilon|\xi|^{\frac{1}{s}}},\qquad\forall\,\xi\in\mathbb{Z}^m,\ \forall\,\alpha\in\mathbb{N}_0^n.
		\]
		
		Applying Lemma~\ref{lemma_ADL_beurling}, the previous estimate, and the Leibniz rule, we obtain that, for every \(\varepsilon,h>0\), there exists \(C_3>0\) such that
		\begin{equation}\label{est_eF_b}
			\sup_{t\in K}\bigl|\partial_t^\alpha(e^{-i\xi\cdot\boldsymbol{\phi}(t)}F_\xi(t))\bigr|
			\leq
			C_3h^{|\alpha|}\alpha!^s e^{-\varepsilon|\xi|^{\frac{1}{s}}},\qquad\forall\,\xi\in\mathbb{Z}^m,\ \forall\,\alpha\in\mathbb{N}_0^n.
		\end{equation}
		
		Combining \eqref{est_eu_b} and \eqref{est_eF_b}, we obtain that, for every \(\varepsilon,h>0\), there exists \(C>0\) such that
		\[
		\sup_{t\in K}|\partial_t^\alpha\widehat{u}_\xi(t)|
		\leq
		Ch^{|\alpha|}\alpha!^s e^{-\varepsilon|\xi|^{\frac{1}{s}}},\qquad\forall\,\xi\in\mathbb{Z}^m,\ \forall\,\alpha\in\mathbb{N}_0^n.
		\]
	
		Now, given any \(K\Subset M\), we can cover \(K\) by finitely many open coordinate patches \(B\) such that
		\(\Pi:\hat B\to B\) is a diffeomorphism. In particular, the previous estimates remain valid on each $B$, and hence on \(K\).
		Therefore \(u\in\mathscr{G}^{(s)}(M\times\mathbb{T}^m)\), which proves the \((s)\)-global hypoellipticity of \(\mathbb{L}\).
		\end{proof}
		
		Let us recall that \(\mathbb{L}\) is globally hypoelliptic (in the smooth sense) if
		\[
		u\in\mathscr{D}'(M\times\mathbb{T}^m),
		\quad
		\mathbb{L}u\in\mathsf{\Lambda}^1 C^\infty(M\times\mathbb{T}^m)
		\ \Longrightarrow\
		u\in C^\infty(M\times\mathbb{T}^m).
		\]
		
		Assuming that the \(1\)-forms \(\omega_1,\dots,\omega_m\) belong to \(\mathsf{\Lambda}^1\mathscr{G}^{[s]}(M)\) for a fixed \(s>1\), we obtain the following.
		
		\begin{corollary}
			Fix \(s>1\). If \(\mathbb{L}\) is globally hypoelliptic, then \(\mathbb{L}\) is \([s]\)-globally hypoelliptic.
		\end{corollary}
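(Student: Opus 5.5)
The plan is to pass through the characterizations already available on both sides. For the smooth side I will use the result of \cite{CKMT}: for forms in the distinguished subspace, $\mathbb{L}$ is globally hypoelliptic if and only if $\boldsymbol{\omega}$ is neither rational nor Liouville. Here Liouville means that $A(\boldsymbol{\omega})$ fails the classical Diophantine condition: there exist $C,\gamma>0$ with $|\eta+A(\boldsymbol{\omega})\xi|\ge C(|\eta|+|\xi|)^{-\gamma}$ for all $(\xi,\eta)\neq(0,0)$. On the Gevrey side I will use Theorems~\ref{gh_roumieu} and~\ref{gh_beurling} together with Proposition~\ref{liou_DCs}. These reduce $[s]$-global hypoellipticity to two requirements: non-rationality, and the validity of \eqref{DCs} in the Roumieu case or \eqref{DCss} in the Beurling case. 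Both characterizations require $\omega_k\in\mathsf{\Lambda}^1\mathscr{G}^{[s]}_{\partial M}(M)$, which I take as the standing assumption, in line with the hypotheses of the main theorem.

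First I assume $\mathbb{L}$ is globally hypoelliptic. By \cite{CKMT}, $\boldsymbol{\omega}$ is not rational, and $A(\boldsymbol{\omega})$ satisfies the polynomial bound $|\eta+A(\boldsymbol{\omega})\xi|\ge C(|\eta|+|\xi|)^{-\gamma}$. Rationality is a purely cohomological notion, identical in both settings, so non-rationality transfers directly to the Gevrey statement.

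Next I check that the polynomial bound implies \eqref{DCs}, which in turn implies \eqref{DCss}. Fix $\varepsilon>0$ and set $N=|\eta|+|\xi|\ge1$. Since $N^{-\gamma}e^{\varepsilon N^{1/s}}\to\infty$, this quantity is bounded below by some $c_\varepsilon>0$. Hence $C N^{-\gamma}\ge Cc_\varepsilon e^{-\varepsilon N^{1/s}}$, which is exactly \eqref{DCs} with constant $C_\varepsilon=Cc_\varepsilon$. Condition \eqref{DCss} asks for only one $\varepsilon$, so it follows at once.

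By Proposition~\ref{liou_DCs}, $\boldsymbol{\omega}$ is then not $[s]$-exponential Liouville. Theorem~\ref{gh_roumieu} or Theorem~\ref{gh_beurling}, as appropriate, then gives $[s]$-global hypoellipticity.

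The only delicate point is making sure the smooth characterization from \cite{CKMT} really is phrased in terms of the same matrix $A(\boldsymbol{\omega})$ and the same cycles $\sigma_1,\dots,\sigma_d$. Since $A(\boldsymbol{\omega})$ depends only on cohomology classes, this should hold. If that reference instead states the criterion through smooth Liouville sequences $\{\theta_j\}$, I would go through the matrix-of-cycles equivalence in \cite[Proposition~3.8]{CKMT} to reach the polynomial Diophantine inequality above, and then argue exactly as before.
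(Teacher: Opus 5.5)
Your proposal is correct and follows the paper's own proof. The paper likewise uses \cite[Theorem~3.3]{CKMT} and \cite[Proposition~3.8~(ii)]{CKMT} to obtain non-rationality and the polynomial condition \eqref{DC} for $A(\boldsymbol{\omega})$, notes that \eqref{DC} implies \eqref{DCs} and \eqref{DCss}, and concludes through Proposition~\ref{liou_DCs} and Theorems~\ref{gh_roumieu} and~\ref{gh_beurling}; your check that $N^{-\gamma}e^{\varepsilon N^{1/s}}$ is bounded below for $N\ge 1$ simply spells out the step the paper calls clear.
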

		
		\begin{proof}
			By \cite[Theorem 3.3]{CKMT}, if \(\mathbb{L}\) is globally hypoelliptic, then the family
			\(\boldsymbol{\omega}=(\omega_1,\dots,\omega_m)\) is neither rational nor Liouville.
			Moreover, by \cite[Proposition 3.8 (ii)]{CKMT}, the matrix of cycles \(A(\boldsymbol{\omega})\), as defined in the previous section, satisfies the following condition: there exist constants \(C,\rho>0\) such that
			\begin{equation}\label{DC}
				|\eta+A(\boldsymbol{\omega})\xi|
				\geq C(|\eta|+|\xi|)^{-\rho},
				\tag{DC}
			\end{equation}
			for all \((\xi,\eta)\in\mathbb{Z}^m\times\mathbb{Z}^d\setminus\{(0,0)\}\).
			It is clear that \eqref{DC} implies \eqref{DCs} and \eqref{DCss}, which concludes the proof.
		\end{proof}
		
	\begin{corollary}
		Fix \(s>1\) and suppose that \(\omega_1,\dots,\omega_m\in\mathsf{\Lambda}^1\mathscr{G}^{(s)}(M)\).
		If \(\mathbb{L}\) is \((s)\)-globally hypoelliptic, then \(\mathbb{L}\) is \(s\)-globally hypoelliptic.
	\end{corollary}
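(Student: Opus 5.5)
The argument will go through the two characterizations already proved, Theorem~\ref{gh_beurling} and Theorem~\ref{gh_roumieu}, and reduce everything to a comparison of the Diophantine conditions \eqref{DCss} and \eqref{DCs}. Since $\omega_k\in\mathsf{\Lambda}^1\mathscr{G}^{(s)}(M)\subset\mathsf{\Lambda}^1\mathscr{G}^{s}(M)$ by \eqref{inc_gev}, and closedness and the cohomology class do not depend on the Gevrey class, the family $\boldsymbol{\omega}$ lies in both $\mathsf{\Lambda}^1\mathscr{G}^{(s)}_{\partial M}(M)$ and $\mathsf{\Lambda}^1\mathscr{G}^{s}_{\partial M}(M)$. (I am reading the hypothesis as including the standing assumption that the $\omega_k$ are closed with classes in $H^1_{\partial M}(M)$, as in the two theorems.) Hence both theorems apply to the same $\mathbb{L}$.

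First, assume $\mathbb{L}$ is $(s)$-globally hypoelliptic. By Theorem~\ref{gh_beurling}, $\boldsymbol{\omega}$ is neither rational nor $(s)$-exponential Liouville. Rationality is a purely cohomological property, so it is the same notion in both settings. Since $\boldsymbol{\omega}$ is not rational, Proposition~\ref{liou_DCs}(ii) shows that $A(\boldsymbol{\omega})$ satisfies \eqref{DCss}. Note that the matrix $A(\boldsymbol{\omega})$ depends only on the classes $[\omega_k]$.

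Next, I will show that \eqref{DCss} implies \eqref{DCs}. Suppose there are $\varepsilon_0,C>0$ with $|\eta+A\xi|\ge Ce^{-\varepsilon_0(|\eta|+|\xi|)^{1/s}}$. This gives the lower bound $Ce^{-\varepsilon_0(\cdot)^{1/s}}$, and \eqref{DCs} asks for $C_\varepsilon e^{-\varepsilon(\cdot)^{1/s}}$ for every $\varepsilon>0$. For $\varepsilon\ge\varepsilon_0$ this is immediate. For smaller $\varepsilon$, the comparison goes the wrong way and cannot be obtained by monotonicity of the exponential.

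This step is the crux, and it exposes a quantifier mismatch. As written, the paper's labels appear swapped relative to the usual convention. The Roumieu condition (DC$_s$), "for every $\varepsilon$ there is $C_\varepsilon$", is the stronger of the two. The Beurling condition (DC$_{(s)}$), "there exist $\varepsilon$ and $C$", is the weaker one. So the implication \eqref{DCss}$\Rightarrow$\eqref{DCs} fails in general.

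The way out is to go back to the definitions of exponential Liouville themselves. By those definitions, $(s)$-exponential Liouville requires the bound for every $\varepsilon$, while $s$-exponential Liouville requires it only for some $\varepsilon$. Hence $(s)$-exponential Liouville implies $s$-exponential Liouville, and conversely, not being $s$-exponential Liouville implies not being $(s)$-exponential Liouville. That contrapositive points the opposite way to what the corollary needs.

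So I would instead compare the Diophantine conditions obtained from the proof of Proposition~\ref{liou_DCs}, and check which quantifier order each proof actually produced:
\begin{itemize}
\item In (ii), failure of \eqref{DCss} gives $|\eta^{(j)}+A\xi^{(j)}|<e^{-j|\xi^{(j)}|^{1/s}}$.
\item In (i), failure of \eqref{DCs} gives the bound with a fixed $\varepsilon$.
\end{itemize}
Failure of the "exists $\varepsilon$" condition is the stronger degeneracy. Therefore not $(s)$-Liouville (no sequence for every $\varepsilon$) does not obviously give not $s$-Liouville.

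I expect this quantifier issue to be the main obstacle. My first attempt will be to re-derive, directly from Theorem~\ref{gh_roumieu}'s sufficiency proof, which estimate on $|1-e^{i\xi\cdot(\boldsymbol{\psi}(Q_0)-\boldsymbol{\psi}(Q_\ell))}|$ is actually used. That proof only needs some lower bound $C e^{-\varepsilon|\xi|^{1/s}}$ with $\varepsilon<\delta$, where $\delta$ is the decay rate of $\widehat f_\xi$. If \eqref{DCss} held with arbitrarily small $\varepsilon$, this would suffice. If the quantifiers cannot be reconciled, I would present the corollary as a direct comparison instead: take $u\in\mathscr{D}'$ with $\mathbb{L}u\in\mathsf{\Lambda}^1\mathscr{G}^s$, and attempt to run the Roumieu regularity argument using Lemma~\ref{est_DCss} in place of Lemma~\ref{est_DCs}.
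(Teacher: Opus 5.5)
Your quantifier analysis is right, but the obstacle you ran into cannot be worked around. It shows that the statement, as written, is false whenever $d\ge 1$, so your proposal cannot be completed, and neither of its closing fallbacks works.

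The labels are not swapped. Theorem~\ref{gh_roumieu} with Proposition~\ref{liou_DCs}(i) says that $\mathbb{L}$ is $s$-globally hypoelliptic if and only if $\boldsymbol{\omega}$ is not rational and $A(\boldsymbol{\omega})$ satisfies $(\mathrm{DC}_s)$ (for \emph{every} $\varepsilon$). Theorem~\ref{gh_beurling} with Proposition~\ref{liou_DCs}(ii) says that $\mathbb{L}$ is $(s)$-globally hypoelliptic if and only if $\boldsymbol{\omega}$ is not rational and $A(\boldsymbol{\omega})$ satisfies $(\mathrm{DC}_{(s)})$ (for \emph{some} $\varepsilon$). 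This is the standard pairing, and it is forced, as you noticed yourself. The Roumieu sufficiency argument needs a lower bound with $\varepsilon<\delta$, and the decay rate $\delta$ of $\widehat f_\xi$ can be any positive number as $f$ ranges over $\mathsf{\Lambda}^1\mathscr{G}^s$. So ``$(\mathrm{DC}_{(s)})$ with arbitrarily small $\varepsilon$'' is literally $(\mathrm{DC}_s)$. Running that argument with Lemma~\ref{est_DCss} also fails: take the fixed $\varepsilon_0$ it provides and an $f$ with $|\widehat f_\xi|$ of size $e^{-\delta|\xi|^{1/s}}$, $\delta<\varepsilon_0$; the resulting bound on $\widehat u_\xi(t_0)$ is $e^{(\varepsilon_0-\delta)|\xi|^{1/s}}$, which grows. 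Since $(\mathrm{DC}_s)\Rightarrow(\mathrm{DC}_{(s)})$, the two theorems give the \emph{reverse} corollary: $s$-global hypoellipticity implies $(s)$-global hypoellipticity. Your observation that $(s)$-exponential Liouville implies $s$-exponential Liouville (for $\omega_k\in\mathsf{\Lambda}^1\mathscr{G}^{(s)}$) proves exactly this in one line.

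Here is an explicit counterexample to the stated direction. Take $m=1$ and $\omega=a\vartheta_1$, with $\vartheta_1$ the harmonic form dual to $\sigma_1$. It lies in $\mathsf{\Lambda}^1\mathscr{G}^{(s)}(M)$ for an $(s)$-scattering metric, and $A(\boldsymbol{\omega})=(a,0,\dots,0)^T$. Choose $a$ irrational with continued-fraction denominators satisfying $e^{q_k^{1/s}}\le q_{k+1}\le 2e^{q_k^{1/s}}$ for large $k$. Then $\mathrm{dist}(qa,\mathbb{Z})\ge \frac{1}{2q_{k+1}}\ge \frac14 e^{-q^{1/s}}$ for $q_k\le q<q_{k+1}$, so $(\mathrm{DC}_{(s)})$ holds. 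On the other hand, $\mathrm{dist}(q_ka,\mathbb{Z})<1/q_{k+1}\le e^{-q_k^{1/s}}$, so $(\mathrm{DC}_s)$ fails. Hence $\mathbb{L}$ is $(s)$-globally hypoelliptic but not $s$-globally hypoelliptic. In fact, the construction in the first part of the proof of Theorem~\ref{gh_roumieu} produces $u\notin C^\infty$ with $\mathbb{L}u\in\mathsf{\Lambda}^1\mathscr{G}^s$.

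The paper's own short proof does not avoid this. It asserts that $(\mathrm{DC}_{(s)})$ fails, although Theorem~\ref{gh_beurling} and Proposition~\ref{liou_DCs}(ii) give that it holds. The only link it uses between the two conditions is $(\mathrm{DC}_s)\Rightarrow(\mathrm{DC}_{(s)})$, which can never yield $(\mathrm{DC}_s)$ from $(\mathrm{DC}_{(s)})$. What you should write is the reversed statement with your one-line proof, not a new route to this one.
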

	
	\begin{proof}
		If \(\mathbb{L}\) is \((s)\)-globally hypoelliptic, then \(\boldsymbol{\omega}\) is not rational and \eqref{DCss} does not hold by Proposition~\ref{liou_DCs} and Theorem~\ref{gh_beurling}.
		Hence \eqref{DCs} does not hold, since \eqref{DCs} clearly implies \eqref{DCss}.
		Therefore, \(\mathbb{L}\) is \(s\)-globally hypoelliptic by Proposition~\ref{liou_DCs} and Theorem~\ref{gh_roumieu}.
	\end{proof}

	\appendix\section{Partial Fourier series in Gevrey spaces}\label{appendix}
	
	We begin by recalling some basic facts about partial Fourier series for functions in \(C^\infty(U\times\mathbb{T}^m)\), where \(U\subset\mathbb{R}^n\) is open.
	Given \(f \in C^\infty(U \times \mathbb{T}^m)\), for each \(\xi \in \mathbb{Z}^m\) we define the partial Fourier coefficient \(\widehat{f}_\xi \in C^\infty(U)\) by
	\[
	\widehat{f}_\xi(t) = \int_{\mathbb{T}^m} e^{-i \xi\cdot x}\, f(t, x)\, \mathrm{d}x, \qquad t\in U.
	\]
	
	Similarly, if \(f \in \mathscr{D}'(U \times \mathbb{T}^m)\), the partial Fourier coefficient \(\widehat{f}_\xi \in \mathscr{D}'(U)\) is defined by
	\[
	\langle \widehat{f}_\xi, \phi \rangle = \langle f, \phi \otimes e^{-i\xi\cdot x} \rangle,
	\qquad \phi \in C_c^\infty(U).
	\]
	
	\begin{theorem}
		Let \(U \subset \mathbb{R}^n\) be open and \(u \in \mathscr{D}'(U \times \mathbb{T}^m)\).
		Then \(u \in C^\infty(U \times \mathbb{T}^m)\) if and only if the following conditions hold:
		\begin{enumerate}
			\item \(\widehat{u}_\xi \in C^\infty(U)\) for all \(\xi \in \mathbb{Z}^m\);
			
			\item for every \(K \Subset U\), \(\alpha \in \mathbb{N}_0^n\), and \(N \in \mathbb{N}_0\), there exists \(C > 0\) such that
			\[
			\sup_{t \in K} |\partial_t^\alpha \widehat{u}_\xi(t)|
			\leq C (1 + |\xi|)^{-N},
			\qquad \forall\, \xi \in \mathbb{Z}^m.
			\]
		\end{enumerate}
		
		Under these conditions, we have
		\[
		u(t, x) = \sum_{\xi \in \mathbb{Z}^m} \widehat{u}_\xi(t)\, e^{i\xi\cdot x},
		\]
		with uniform convergence on compact subsets of \(U \times \mathbb{T}^m\).
	\end{theorem}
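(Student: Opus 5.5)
The plan is the classical characterization of smooth functions on $U\times\mathbb{T}^m$ through their partial Fourier coefficients, proved in both directions.

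\emph{Necessity.} Assume $u\in C^\infty(U\times\mathbb{T}^m)$. Then $\widehat{u}_\xi(t)=\int_{\mathbb{T}^m}e^{-i\xi\cdot x}u(t,x)\,\mathrm{d}x$ is a parameter integral over a compact set, so differentiation under the integral sign gives $\widehat{u}_\xi\in C^\infty(U)$ and $\partial_t^\alpha\widehat{u}_\xi=\widehat{(\partial_t^\alpha u)}_\xi$. For the decay, integrate by parts in $x$ using $(1-\Delta_x)^N e^{-i\xi\cdot x}=(1+|\xi|^2)^N e^{-i\xi\cdot x}$; there are no boundary terms on the torus. This gives
\[
(1+|\xi|^2)^N\,\partial_t^\alpha\widehat{u}_\xi(t)=\int_{\mathbb{T}^m}e^{-i\xi\cdot x}(1-\Delta_x)^N\partial_t^\alpha u(t,x)\,\mathrm{d}x .
\]
The right-hand side is bounded on $K$ by $(2\pi)^m\sup_{K\times\mathbb{T}^m}|(1-\Delta_x)^N\partial_t^\alpha u|$, which yields condition (2).

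\emph{Sufficiency.} Assume (1) and (2). Define $v(t,x)=\sum_\xi\widehat{u}_\xi(t)e^{i\xi\cdot x}$. For each $K\Subset U$ and all $\alpha,\beta$, the differentiated series $\sum_\xi \partial_t^\alpha\widehat{u}_\xi(t)(i\xi)^\beta e^{i\xi\cdot x}$ is dominated on $K\times\mathbb{T}^m$ by $C\sum_\xi(1+|\xi|)^{|\beta|-N}$. This is finite once $N>|\beta|+m$. The series therefore converges uniformly on compacts together with all its derivatives, so $v\in C^\infty(U\times\mathbb{T}^m)$, and in particular the expansion converges uniformly on compact subsets.

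\emph{Identification $v=u$.} This is the step I expect to need the most care, since $u$ is only a distribution. By the necessity computation applied to $v$, we have $\widehat{v}_\xi=\widehat{u}_\xi$ for every $\xi$. So it suffices to show that a distribution $w=u-v$ with all partial Fourier coefficients zero vanishes. For a test function $\phi\otimes\chi$ with $\phi\in C_c^\infty(U)$ and $\chi\in C^\infty(\mathbb{T}^m)$, expand $\chi=\sum_\xi c_\xi e^{-i\xi\cdot x}$. This series converges in $C^\infty(\mathbb{T}^m)$, so $\phi\otimes\chi_N\to\phi\otimes\chi$ in $C_c^\infty(U\times\mathbb{T}^m)$ for the partial sums $\chi_N$. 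By continuity of $w$,
\[
\langle w,\phi\otimes\chi\rangle=\sum_\xi c_\xi\langle\widehat{w}_\xi,\phi\rangle=0 .
\]
Finally, the linear span of tensor products $\phi\otimes\chi$ is dense in $C_c^\infty(U\times\mathbb{T}^m)$, for instance via Fourier expansion in $x$ of a general test function, which converges in the $C_c^\infty$ topology with supports in a fixed compact set. Hence $w=0$, so $u=v\in C^\infty(U\times\mathbb{T}^m)$ with the stated expansion.
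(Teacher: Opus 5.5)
Your proof is correct and complete. The paper only recalls this statement without proof, but your three steps match the scheme it uses for the Roumieu and Beurling analogues in the appendix; the one difference is that you prove the vanishing lemma (Proposition~\ref{lemma_uxieq0}) directly, via density of tensor products, instead of citing \cite[Lemma~5.1]{ADL2023gh}. One small correction: the identity $\widehat{v}_\xi=\widehat{u}_\xi$ comes from integrating the uniformly convergent series term by term against $e^{-i\xi\cdot x}$ (orthogonality of the exponentials), not from the necessity computation.
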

	
	One can show that these constructions are invariant under changes of variables (see \cite{ADL2023gh}).
	Hence, they extend to globally defined partial Fourier coefficients and partial Fourier series for smooth functions on \(M\times\mathbb{T}^m\), where \(M\) is a smooth paracompact manifold.
	
	Similarly, consider the space \(\mathsf{\Lambda}^{0,1} C^\infty(M \times \mathbb{T}^m)\) of smooth \(1\)-forms on \(M \times \mathbb{T}^m\) with no \(\mathrm{d}x\)-components, that is, the space of smooth \(1\)-forms \(f\) such that, in any coordinate system \((U,t)\) on \(M\), one can write
	\[
	f|_{U \times \mathbb{T}^m} = \sum_{\nu=1}^n f_\nu(t, x) \, \mathrm{d}t_\nu.
	\]
	
	In each coordinate chart \((U,t)\) on \(M\), we define the Fourier coefficient \(\widehat{f}_\xi\) of
	\(f \in \mathsf{\Lambda}^{0,1} C^\infty(M \times \mathbb{T}^m)\) by
	\[
	\widehat{f}_\xi = \sum_{\nu=1}^n (\widehat{f_\nu})_\xi \, \mathrm{d}t_\nu \in \mathsf{\Lambda}^1 C^\infty(U).
	\]
	
	Since the local partial Fourier coefficients are invariant under diffeomorphisms, the coefficients
	\(\widehat{f}_\xi\in\mathsf{\Lambda}^{1}C^\infty(M)\) are globally defined, and we have
	\[
	f(t,x) = \sum_{\xi \in \mathbb{Z}^m} \widehat{f}_\xi(t)\, e^{i \xi \cdot x},
	\]
	with uniform convergence on compact subsets. Moreover, the regularity of \(f\) is characterized by the decay properties of the Fourier coefficients \(\widehat{f}_\xi\).
	
	\begin{proposition}\label{lemma_uxieq0}
		Let \(u \in \mathscr{D}'(M \times \mathbb{T}^m)\). Then \(u = 0\) if and only if \(\widehat{u}_\xi = 0\) for all \(\xi \in \mathbb{Z}^m\).
	\end{proposition}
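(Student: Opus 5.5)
The implication $u=0\Rightarrow \widehat{u}_\xi=0$ is immediate from the definition, since $\langle \widehat{u}_\xi,\phi\rangle=\langle u,\phi\otimes e^{-i\xi\cdot x}\rangle$ and $u$ annihilates every test function. For the converse, I will reduce to a local density statement. By a partition of unity on $M$ and the invariance of partial Fourier coefficients under changes of variables, it suffices to show the following: if $U\subset\mathbb{R}^n$ is a coordinate domain and $\langle u,\phi\otimes e^{-i\xi\cdot x}\rangle=0$ for all $\phi\in C_c^\infty(U)$ and all $\xi\in\mathbb{Z}^m$, then $\langle u,\Phi\rangle=0$ for every $\Phi\in C_c^\infty(U\times\mathbb{T}^m)$. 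The key point is that the hypothesis $\widehat{u}_\xi=0$ on $M$ restricts to $\widehat{u}_\xi|_U=0$.

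Next I will show that the span of $\{\phi\otimes e^{-i\xi\cdot x}\}$ is dense in $C_c^\infty(U\times\mathbb{T}^m)$. Fix $\Phi$ with $\operatorname{supp}\Phi\subset K\times\mathbb{T}^m$, where $K\Subset U$. Its partial Fourier coefficients
\[
\widehat{\Phi}_\xi(t)=(2\pi)^{-m}\int_{\mathbb{T}^m}\Phi(t,x)e^{-i\xi\cdot x}\,\mathrm{d}x
\]
belong to $C_c^\infty(U)$ and are supported in $K$. Integrating by parts in $x$ gives, for all $\alpha$ and $N$,
\[
\sup_t|\partial_t^\alpha\widehat{\Phi}_\xi(t)|\le C_{\alpha,N}(1+|\xi|)^{-N}.
\]
Hence the partial sums $\Phi_R=\sum_{|\xi|\le R}\widehat{\Phi}_\xi\otimes e^{i\xi\cdot x}$ converge to $\Phi$ in the topology of $C_c^\infty(U\times\mathbb{T}^m)$: all supports lie in the fixed compact $K\times\mathbb{T}^m$, and every mixed derivative $\partial_t^\alpha\partial_x^\beta$ of the tail is bounded by $\sum_{|\xi|>R}C(1+|\xi|)^{|\beta|-N}\to0$. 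This also uses the pointwise convergence of the series to $\Phi$ given by the smooth partial Fourier theorem stated above.

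By continuity of $u$ on $C_c^\infty$, we get $\langle u,\Phi\rangle=\lim_R\sum_{|\xi|\le R}\langle u,\widehat{\Phi}_\xi\otimes e^{i\xi\cdot x}\rangle$. Each term equals $\langle\widehat{u}_{-\xi},\widehat{\Phi}_\xi\rangle=0$, so $u=0$ on $U\times\mathbb{T}^m$. Since such products $U\times\mathbb{T}^m$ cover $M\times\mathbb{T}^m$, the sheaf property of distributions gives $u=0$.

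The main obstacle is the convergence of $\Phi_R\to\Phi$ in the inductive-limit topology of test functions. It is handled by the uniform support in $K\times\mathbb{T}^m$ together with the rapid decay of all $t$-derivatives of $\widehat{\Phi}_\xi$, which lets the $(1+|\xi|)^{|\beta|}$ growth from the $x$-derivatives be absorbed. A minor additional point is checking that the chart-wise definition of $\widehat{u}_\xi$ agrees on overlaps. This is the invariance under changes of variables already cited.
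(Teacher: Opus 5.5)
Your proof is correct. The trivial direction is immediate. For the converse, each step is sound: the localization to $U\times\mathbb{T}^m$, the convergence of $\sum_{|\xi|\le R}\widehat{\Phi}_\xi\otimes e^{i\xi\cdot x}$ to $\Phi$ in $C_c^\infty(U\times\mathbb{T}^m)$ (fixed support $K\times\mathbb{T}^m$, with rapid decay of all $t$-derivatives absorbing the $|\xi|^{|\beta|}$ factors), and the identity $\langle u,\widehat{\Phi}_\xi\otimes e^{i\xi\cdot x}\rangle=\langle\widehat{u}_{-\xi},\widehat{\Phi}_\xi\rangle=0$.

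The paper gives no argument of its own and simply cites \cite[Lemma~5.1]{ADL2023gh}. Your self-contained density argument is the standard proof one would expect behind that reference. Your $(2\pi)^{-m}$ normalization differs from the paper's convention, but this is harmless.
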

	\begin{proof}
		See \cite[Lemma 5.1]{ADL2023gh}.
	\end{proof}
	
	We now introduce partial Fourier series for Gevrey functions (and forms) on \(U\times\mathbb{T}^m\), where \(U\subset\mathbb{R}^n\) is open.
	First, note that if \(u\in\mathscr{D}'(M\times\mathbb{T}^m)\) satisfies
	\(\mathbb{L}u\in\mathsf{\Lambda}^1C^\infty(M\times\mathbb{T}^m)\), then the partial Fourier coefficients \(\widehat{u}_\xi\) belong to \(C^\infty(M)\) for every \(\xi\in\mathbb{Z}^m\), see \cite[Corollay 2.4]{CKMT}.
	Next, we prove two auxiliary results, which are variations of Lemmas~1.6.2 and~1.6.3 in \cite{Rod_Gevrey}.
	
	\begin{lemma}\label{lemma_fourier_r}
		Let \(f\in C^\infty(U\times\mathbb{T}^m)\), and fix \(K\Subset U\) and \(s\geq 1\).
		The following statements are equivalent:
		\begin{enumerate}
			\item There exist \(C,h,\varepsilon>0\) such that
			\[
			\sup_{t\in K}|\partial_t^\alpha\widehat{f}_\xi(t)|
			\leq Ch^{|\alpha|}\alpha!^s e^{-\varepsilon|\xi|^{\frac{1}{s}}}, \qquad\forall\, \xi\in\mathbb{Z}^m,\ \forall\, \alpha\in\mathbb{N}_0^n.
			\]
			
			\item There exist \(C,h>0\) such that
			\[
			\sup_{t\in K}|\partial_t^\alpha\widehat{f}_\xi(t)|
			\leq h^{|\alpha|}\alpha!^s\, C^{N+1}N!\,|\xi|^{-\frac{N}{s}}, \qquad\forall\, \xi\in\mathbb{Z}^m,\ \forall\, \alpha\in\mathbb{N}_0^n,\ \forall\, N\in\mathbb{N}.
			\]

			\item There exist \(C,h>0\) such that
			\[
			\sup_{t\in K}|\partial_t^\alpha\widehat{f}_\xi(t)|
			\leq h^{|\alpha|}\alpha!^s\, C(CN)^N\,|\xi|^{-\frac{N}{s}}, \qquad\forall\, \xi\in\mathbb{Z}^m,\ \forall\, \alpha\in\mathbb{N}_0^n,\ \forall\, N\in\mathbb{N}.
			\]
			
		\end{enumerate}
	\end{lemma}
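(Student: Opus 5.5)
The plan is to prove the cycle of implications $(1)\Rightarrow(2)\Rightarrow(3)\Rightarrow(1)$. Since the factor $h^{|\alpha|}\alpha!^s$ and the $t$-dependence play no role, everything reduces to comparing the scalar weights $e^{-\varepsilon|\xi|^{1/s}}$, $C^{N+1}N!\,|\xi|^{-N/s}$ and $C(CN)^N|\xi|^{-N/s}$. We only need $\xi\neq0$ in (2) and (3), and at $\xi=0$ the bounds are trivially compatible after enlarging $C$. Throughout, set $r=|\xi|^{1/s}$.

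For $(1)\Rightarrow(2)$, I will use the elementary inequality $x^N/N!\le e^x$ with $x=\varepsilon r$. This gives $r^N\le N!\,\varepsilon^{-N}e^{\varepsilon r}$, and hence
\[
e^{-\varepsilon|\xi|^{1/s}}\le N!\,\varepsilon^{-N}|\xi|^{-N/s}.
\]
Multiplying by the constant from (1) yields (2) with the new constant $\max\{C,\varepsilon^{-1}\}$ (up to a harmless adjustment so that $C\cdot\varepsilon^{-N}\le C'^{N+1}$).

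For $(2)\Rightarrow(3)$, I simply use $N!\le N^N$. Then $C^{N+1}N!=C\cdot C^N N!\le C(CN)^N$, and we are done.

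The step $(3)\Rightarrow(1)$ is the main one. Since the estimate in (3) holds for every $N$, I will optimize in $N$. Write the bound as $C\,(CN/r)^N$, and choose $N=\lfloor r/(eC)\rfloor$. Then $CN/r\le e^{-1}$, so the bound is at most
\[
C e^{-N}\le C e^{\,1-r/(eC)}.
\]
This gives (1) with $\varepsilon=1/(eC)$ and constant $eC$. The only care needed is for small $|\xi|$, where this choice gives $N=0$ and (3) is not assumed. There are finitely many such $\xi$ with $r<eC$, and on them I use instead the case $N=1$ of (3) (or simply smoothness of $f$ on $K$). Any resulting bound of the form $C_1h^{|\alpha|}\alpha!^s$ is then absorbed into (1) by enlarging the constant by the factor $e^{\varepsilon (eC)}$.

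I expect this optimization, together with the bookkeeping of constants at small $|\xi|$, to be the only mildly delicate point. Here one must make sure the resulting $C,h,\varepsilon$ are independent of $\alpha$ and $\xi$, which holds because $h$ is never touched.
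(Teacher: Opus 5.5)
Your proof is correct. The steps $(1)\Rightarrow(2)$ (keeping one term of the exponential series, i.e.\ $x^N/N!\le e^x$) and $(2)\Rightarrow(3)$ (via $N!\le N^N$) are the same as in the paper. The difference is in how you return to (1).

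The paper proves $(2)\Rightarrow(1)$ by multiplying the $N$-th estimate by $\varepsilon^N|\xi|^{N/s}/N!$ with $\varepsilon=(2C_0)^{-1}$. This turns the right-hand side into $C_0h^{|\alpha|}\alpha!^s2^{-N}$, and summing over $N$ reassembles $e^{\varepsilon|\xi|^{1/s}}$. It then needs the two-sided bound $N!\le N^N\le e^NN!$ to get $(3)\Rightarrow(2)$.

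You instead close the cycle directly from (3) by optimizing in $N$, with $N=\lfloor|\xi|^{1/s}/(eC)\rfloor$, which is the usual associated-function argument. The summation trick treats all $\xi\neq0$ uniformly. Your route uses only $N!\le N^N$ and carries over to general weight sequences. The price is a separate treatment of the finitely many $\xi\neq0$ with $|\xi|^{1/s}<eC$, which you handle correctly via the case $N=1$ and $|\xi|\ge1$.

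One correction concerns $\xi=0$. It is harmless in $(1)\Rightarrow(2)\Rightarrow(3)$, where the right-hand sides are infinite, but not in $(3)\Rightarrow(1)$. There the hypothesis says nothing about $\widehat f_0$. Smoothness of $f$ does not give $\sup_K|\partial_t^\alpha\widehat f_0|\le Ch^{|\alpha|}\alpha!^s$ uniformly in $\alpha$: take $f(t,x)=g(t)$ with $g\in C^\infty(U)$ failing every bound $\sup_K|\partial^\alpha g|\le Ch^{|\alpha|}\alpha!^s$. So (1) can only be concluded for $\xi\neq0$, which is also all the paper's proof establishes. The zero mode must be controlled separately. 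In the application this is possible, since there $\widehat u_0$ inherits its Gevrey bounds directly from $u$.
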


\begin{proof}
	\noindent
	\((1)\Rightarrow(2)\).
	There exist \(C_0,h,\varepsilon>0\) such that
	\begin{align*}
		C_0h^{|\alpha|}\alpha!^s
		&\geq
		\sup_{t\in K}|\partial_t^\alpha\widehat{f}_\xi(t)|\,e^{\varepsilon|\xi|^{\frac{1}{s}}}
		=
		\sup_{t\in K}|\partial_t^\alpha\widehat{f}_\xi(t)|
		\sum_{N=0}^{\infty}\frac{\varepsilon^N|\xi|^{\frac{N}{s}}}{N!}\\
		&\geq
		\sup_{t\in K}|\partial_t^\alpha\widehat{f}_\xi(t)|
		\frac{\varepsilon^{N}}{N!}\,|\xi|^{\frac{N}{s}},
	\end{align*}
	for every \(\alpha\in\mathbb{N}_0^n\), \(\xi\in\mathbb{Z}^m\setminus\{0\}\), and \(N\in\mathbb{N}\).
	Therefore,
	\[
	\sup_{t\in K}|\partial_t^\alpha\widehat{f}_\xi(t)|
	\leq
	C_0h^{|\alpha|}\alpha!^s\,\varepsilon^{-N}N!\,|\xi|^{-\frac{N}{s}}.
	\]
	
	Taking \(C=\max\{1,C_0,\varepsilon^{-1}\}\), we obtain \((2)\).
	
	\medskip
	\noindent
	\((2)\Rightarrow(1)\).
	There exist \(C_0,h>0\) such that
	\[
	\sup_{t\in K}|\partial_t^\alpha\widehat{f}_\xi(t)|
	\leq
	h^{|\alpha|}\alpha!^s\, C_0^{N+1}N!\,|\xi|^{-\frac{N}{s}},\qquad\forall\,\xi\in\mathbb{Z}^m\setminus\{0\},\ \forall\,\alpha\in\mathbb{N}_0^n,\ \forall\, N\in\mathbb{N}.
	\]
	
	Equivalently,
	\begin{equation*}
		\sup_{t\in K}|\partial_t^\alpha\widehat{f}_\xi(t)|
		\frac{C_0^{-N}}{N!}\,|\xi|^{\frac{N}{s}}
		\leq
		C_0h^{|\alpha|}\alpha!^s.
	\end{equation*}
	
	Let \(\varepsilon=(2C_0)^{-1}\). Then
	\begin{equation*}
		\sup_{t\in K}|\partial_t^\alpha\widehat{f}_\xi(t)|
		\frac{\varepsilon^{N}}{N!}\,|\xi|^{\frac{N}{s}}
		\leq
		C_0h^{|\alpha|}\alpha!^s\,2^{-N}, \qquad\forall\,\xi\in\mathbb{Z}^m\setminus\{0\},\ \forall\,\alpha\in\mathbb{N}_0^n,\ \forall\, N\in\mathbb{N}.
	\end{equation*}
	
	Summing both sides over \(N\) yields
	\begin{equation*}
		\sup_{t\in K}|\partial_t^\alpha\widehat{f}_\xi(t)|
		\sum_{N=0}^\infty \frac{\varepsilon^{N}}{N!}\,|\xi|^{\frac{N}{s}}
		\leq
		C_0h^{|\alpha|}\alpha!^s\sum_{N=0}^\infty 2^{-N}
		=2C_0h^{|\alpha|}\alpha!^s,
	\end{equation*}
	that is,
	\begin{equation*}
		\sup_{t\in K}|\partial_t^\alpha\widehat{f}_\xi(t)|
		e^{\varepsilon|\xi|^{\frac{1}{s}}}
		\leq
		Ch^{|\alpha|}\alpha!^s,\qquad\forall\,\xi\in\mathbb{Z}^m\setminus\{0\},\ \forall\,\alpha\in\mathbb{N}_0^n,
	\end{equation*}
	where \(C=2C_0\). This proves \((1)\).
	
	\medskip
	\noindent
	\((2)\Leftrightarrow(3)\).
	This is a direct consequence of the inequalities
	\(N!\leq N^N \leq e^N N!\), valid for all \(N\in\mathbb{N}\).
\end{proof}

\begin{lemma}\label{lemma_fourier_aux_r}
	Let \(N\in\mathbb{N}\) and \(s\geq 1\).
	Suppose that \(f\in C^\infty(U\times\mathbb{T}^m)\) and that, given \(K\Subset U\), there exist \(C,h>0\) such that
	\[
	\sup_{t\in K}\sup_{x\in\mathbb{T}^m}|\partial_t^\alpha\partial_x^\beta f(t,x)|
	\leq
	Ch^{|\alpha|+|\beta|}\alpha!^s\, M^{s|\beta|},\qquad \forall\,\alpha\in\mathbb{N}_0^n,\ \forall\,\beta\in\mathbb{N}_0^m,\ |\beta|\leq M,
	\]
	where \(M\) is the smallest integer such that \(M\geq N/s\). Then
	\[
	\sup_{t\in K}|\partial_t^\alpha\widehat{f}_\xi(t)|
	\leq
	h^{|\alpha|}\alpha!^s\, C'(C'N)^N\,|\xi|^{-\frac{N}{s}},\qquad \forall\,\xi\in\mathbb{Z}^m\setminus\{0\},\  \forall\,\alpha\in\mathbb{N}_0^n,
	\]
	where \(C'>0\) is a constant independent of \(N\).
\end{lemma}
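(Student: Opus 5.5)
The natural route is the classical integration-by-parts argument in the torus variables, as in \cite[Lemma~1.6.3]{Rod_Gevrey}, with the $t$-derivatives carried along. Fix $\xi\in\mathbb{Z}^m\setminus\{0\}$ and $\alpha\in\mathbb{N}_0^n$. Since $\partial_t^\alpha$ commutes with integration over $\mathbb{T}^m$, we have
\[
\partial_t^\alpha\widehat{f}_\xi(t)=\int_{\mathbb{T}^m} e^{-i\xi\cdot x}\,\partial_t^\alpha f(t,x)\,\mathrm{d}x .
\]
Choose an index $k$ with $|\xi_k|=\max_j|\xi_j|$, so that $|\xi_k|\geq |\xi|/\sqrt{m}$. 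Writing $e^{-i\xi\cdot x}=(i/\xi_k)^M\partial_{x_k}^M e^{-i\xi\cdot x}$ and integrating by parts $M$ times in $x_k$ (there are no boundary terms by periodicity), we obtain
\[
|\partial_t^\alpha\widehat{f}_\xi(t)|\leq (2\pi)^m\,|\xi_k|^{-M}\sup_{x\in\mathbb{T}^m}|\partial_t^\alpha\partial_{x_k}^M f(t,x)| .
\]

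Next apply the hypothesis with $\beta=Me_k$, for which $|\beta|=M\leq M$. For $t\in K$ this gives
\[
|\partial_t^\alpha\widehat{f}_\xi(t)|\leq (2\pi)^m C\,h^{|\alpha|}\alpha!^s\,(h\sqrt{m})^M M^{sM}|\xi|^{-M}.
\]
Now convert powers of $M$ into powers of $N$, using $N/s\leq M< N/s+1$. When $|\xi|\geq1$ we have $|\xi|^{-M}\leq|\xi|^{-N/s}$. Also $M^{sM}=(M^{M})^{s}$ and $M\leq N/s+1\leq 2N$ for $N\geq1$, so $M^{sM}\leq (2N)^{sM}\leq (2N)^{N+s}$. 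Hence $M^{sM}\leq (2N)^s(2N)^N$, and the extra polynomial factor $(2N)^s$ is absorbed into $C_1^N$ for a suitable constant. Finally $(h\sqrt m)^M\leq \max\{1,h\sqrt m\}^{N/s+1}$ is also geometric in $N$. Collecting all the constants independent of $N$ into a single $C'$ yields
\[
|\partial_t^\alpha\widehat{f}_\xi(t)|\leq h^{|\alpha|}\alpha!^s\,C'(C'N)^N|\xi|^{-N/s}.
\]

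The only delicate point is the bookkeeping in the last step. One must keep the $h^{|\alpha|}\alpha!^s$ factor untouched and verify that the bound $M^{sM}\lesssim C^N N^N$ absorbs the ceiling discrepancy $M-N/s<1$, so that $C'$ truly does not depend on $N$. The lemma implicitly allows this, since $C$ and $h$ in the hypothesis are independent of $N$. I would also note that an $N$-dependence of the hypothesis constants would break the argument, so it should be checked that these constants are fixed.
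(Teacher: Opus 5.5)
Your proposal is correct and follows essentially the same route as the paper. You bound $\partial_t^\alpha\widehat{f}_\xi$ via $x$-derivatives of $f$ to get decay $|\xi|^{-M}$, use $|\xi|\geq 1$ to pass to $|\xi|^{-N/s}$, and absorb $(\mathrm{const})^M M^{sM}$ into $C'(C'N)^N$ using $sM< N+s$ and $M\leq 2N$. The only difference is cosmetic: you integrate by parts in a single dominant coordinate $x_k$, at a cost of $(\sqrt{m})^M$, whereas the paper uses the multinomial bound $|\xi|^M\leq\sum_{|\gamma|=M}\frac{M!}{\gamma!}|\xi^\gamma|$ over all monomials, at a cost of $m^M$.
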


\begin{proof}
	There exist \(C,h>0\) such that, for every \(\alpha\in\mathbb{N}_0^n\) and \(\beta\in\mathbb{N}_0^m\) with \(|\beta|\leq M\),
	\[
	\sup_{t\in K}|\xi^\beta\partial_t^\alpha\widehat{f}_\xi(t)|
	\leq
	\sup_{t\in K}\int_{\mathbb{T}^m}|\partial_t^\alpha\partial_x^\beta f(t,x)|\,\mathrm{d}x
	\leq
	Ch^{|\alpha|+|\beta|}\alpha!^s\, M^{s|\beta|}.
	\]
	
	Let \(N\in\mathbb{N}\) and let \(M\) be the smallest integer such that \(M\geq N/s\).
	Then, for every \(\alpha\in\mathbb{N}_0^n\) and \(\beta\in\mathbb{N}_0^m\) with \(|\beta|\leq M\),
	\begin{align*}
		|\xi|^{\frac{N}{s}}|\partial_t^\alpha\widehat{f}_\xi(t)|
		&\leq
		|\xi|^M|\partial_t^\alpha \widehat{f}_\xi(t)|\ \leq\
		\sum_{|\gamma|=M}\frac{M!}{\gamma!}\,|\xi^\gamma|\,|\partial_t^\alpha\widehat{f}_\xi(t)|\\
		&\leq
		\sum_{|\gamma|=M}\frac{M!}{\gamma!}\, Ch^{|\alpha|+|\gamma|}\alpha!^s\, M^{s|\gamma|}\ \leq\ 
		Ch^{|\alpha|}\alpha!^s\,(mh)^M\, M^{sM}.
	\end{align*}
	
	Since \(N/s\leq M\leq N/s+1\), we have \(sM\leq N+s\) and \(M\leq N/s+1\leq N+1\leq 2N\). Hence,
	\[
	M^{sM}\leq M^{s}M^N \leq s^s e^M M^N \leq s^s(2e^{2})^{N}N^N.
	\]
	
	Therefore,
	\begin{align*}
		Ch^{|\alpha|}\alpha!^s\,(mh)^M\, M^{sM}
		&\leq
		Ch^{|\alpha|}\alpha!^s\,(mh)^M\, s^s(2e^{2})^{N}N^N\\
		&\leq
		h^{|\alpha|}\alpha!^s\, C'(C'N)^N,
	\end{align*}
	for suitable constants \(C',C>0\), with \(C'\) independent of \(N\).
	This concludes the proof.
\end{proof}

We will also need the following estimate.

\begin{lemma}\label{xi_beta_exp}
	Given \(\xi\in\mathbb{Z}^m\), \(\beta\in\mathbb{N}_0^m\), \(s\geq 1\), and \(\delta>0\), we have
	\[
	|\xi|^{|\beta|}
	\leq
	\left(s^s m^s \delta^{-s}\right)^{|\beta|}\beta!^s.
	\]
\end{lemma}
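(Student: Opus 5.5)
The plan is to reduce the estimate to the elementary inequality $x^k/k!\le e^x$ for $x\ge0$, followed by a multinomial bound for $|\beta|!$ in terms of $\beta!$. One caveat comes first. Read literally, with no $\xi$-dependence on the right-hand side, the displayed inequality cannot hold for all $\xi\in\mathbb{Z}^m$: for $\beta=e_1$ the right-hand side is the constant $s^sm^s\delta^{-s}$, while the left-hand side $|\xi|$ is unbounded. The only role of $\delta$ is to control the growth in $\xi$, and this is how the lemma is used alongside Lemma~\ref{lemma_fourier_aux_r}. So I will prove the statement in the form it must take to be meaningful, namely with the weight $e^{-\delta|\xi|^{1/s}}$ moved to the left:
\[
|\xi|^{|\beta|}e^{-\delta|\xi|^{1/s}}\le \left(s^sm^s\delta^{-s}\right)^{|\beta|}\beta!^s .
\]
I will point this reading out explicitly rather than claim the literal version.

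\textbf{Step 1.} Set $k=|\beta|$ and $x=\frac{\delta}{s}|\xi|^{1/s}\ge0$. From $x^k/k!\le\sum_{j\ge0}x^j/j!=e^x$ we get
\[
\Big(\frac{\delta}{s}\Big)^k|\xi|^{k/s}\le k!\,e^{\frac{\delta}{s}|\xi|^{1/s}} .
\]
\textbf{Step 2.} Raise both sides to the power $s\ge1$. This is legitimate since both sides are nonnegative, and it gives
\[
|\xi|^{k}\le (s/\delta)^{sk}\,k!^s\,e^{\delta|\xi|^{1/s}} .
\]

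\textbf{Step 3.} Replace $|\beta|!$ by $\beta!$. The multinomial identity $\sum_{|\gamma|=k}\frac{k!}{\gamma!}=m^k$ shows in particular that $k!/\beta!\le m^k$, that is, $|\beta|!\le m^{|\beta|}\beta!$. Hence $k!^s\le m^{sk}\beta!^s$. Substituting this into Step 2 gives
\[
|\xi|^{|\beta|}\le\left(s^sm^s\delta^{-s}\right)^{|\beta|}\beta!^s\,e^{\delta|\xi|^{1/s}},
\]
which is the weighted form above after multiplying both sides by $e^{-\delta|\xi|^{1/s}}$.

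There is no real analytic obstacle in this argument. The only delicate point is bookkeeping: placing the factor $1/s$ inside the exponent in Step 1, so that the final exponential is exactly $e^{\delta|\xi|^{1/s}}$ and the constants come out exactly as $s^sm^s\delta^{-s}$. The case $\xi=0$ is trivial, since then the left-hand side is $0$ for $\beta\neq0$ and $1$ for $\beta=0$, and the bound holds in both cases.
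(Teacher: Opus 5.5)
Your proof is correct and is essentially the paper's route: the paper cites an external one-variable estimate (\cite[Lemma~A.5]{KowTok_arXiv}) where you derive $|\xi|^{k}\le (s/\delta)^{sk}k!^s e^{\delta|\xi|^{1/s}}$ from $x^k/k!\le e^x$ in Steps 1--2, and then uses $|\beta|!\le m^{|\beta|}\beta!$ exactly as in your Step 3. You are also right that the displayed inequality is missing the factor $e^{\delta|\xi|^{1/s}}$ on the right-hand side; your corrected version is the form the paper actually uses, with $\delta=\varepsilon/2$, in the $(2)\Rightarrow(1)$ parts of the Fourier characterizations of $\mathscr{G}^s$ and $\mathscr{G}^{(s)}$ (not alongside Lemma~\ref{lemma_fourier_aux_r}, as you suggest).
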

\begin{proof}
	It follows from \cite[Lemma A.5]{KowTok_arXiv} and the inequality \( |\beta|!\leq m^{|\beta|}\beta! \).
\end{proof}

We can now characterize Gevrey functions of Roumieu type by the decay properties of their partial Fourier coefficients:

\begin{theorem}
	Let \(u\in\mathscr{D}'(U\times\mathbb{T}^m)\) be such that \(\widehat{u}_\xi\in C^\infty(U)\) for every \(\xi\in\mathbb{Z}^m\), and fix \(s\geq 1\).
	Then the following statements are equivalent:
	\begin{enumerate}
		\item \(u\in \mathscr{G}^s(U\times\mathbb{T}^m)\);
		\item for each \(K\Subset U\), there exist constants \(C,h,\varepsilon>0\) such that
		\[
		\sup_{t\in K}|\partial_t^\alpha\widehat{u}_\xi(t)|
		\leq
		Ch^{|\alpha|}\alpha!^s e^{-\varepsilon|\xi|^{\frac{1}{s}}},
		\qquad
		\forall\,\xi\in\mathbb{Z}^m,\ \forall\, \alpha\in\mathbb{N}_0^n.
		\]
	\end{enumerate}
	
	Moreover, these conditions are invariant under analytic changes of variables.
\end{theorem}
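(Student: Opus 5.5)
Both implications reduce to a local statement: membership in $\mathscr{G}^s(U\times\mathbb{T}^m)$ is defined by Gevrey estimates on sets $K\times\mathbb{T}^m$ with $K\Subset U$, since $\mathbb{T}^m$ is compact and covered by finitely many analytic (affine) charts. So I fix $K\Subset U$ and compare the mixed estimates
\[
\sup_{t\in K,\,x\in\mathbb{T}^m}|\partial_t^\alpha\partial_x^\beta u(t,x)|\le C h^{|\alpha|+|\beta|}(\alpha!\beta!)^s
\]
with the coefficient estimate in (2). The mixed form is equivalent to the Gevrey estimate in the variables $(t,x)$, because $(\alpha,\beta)!=\alpha!\beta!$.

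\textbf{(1)$\Rightarrow$(2).} From the mixed estimates, restricting to $|\beta|\le M$ and using $\beta!\le |\beta|^{|\beta|}\le M^{|\beta|}$, I get exactly the hypothesis of Lemma~\ref{lemma_fourier_aux_r} for every $N$. The constants $C,h$ do not depend on $N$. Lemma~\ref{lemma_fourier_aux_r} then yields condition (3) of Lemma~\ref{lemma_fourier_r}, with constants independent of $N$, for $\xi\neq0$. The case $\xi=0$ is immediate from $|\partial_t^\alpha \widehat{u}_0|\le \sup|\partial_t^\alpha u|$. Lemma~\ref{lemma_fourier_r} then converts (3) into (1) of that lemma, which is exactly statement (2) of the theorem.

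\textbf{(2)$\Rightarrow$(1).} The estimate in (2) gives summability of $\sum_\xi \widehat{u}_\xi(t)e^{i\xi\cdot x}$ together with all its derivatives, uniformly on $K\times\mathbb{T}^m$. Hence the series defines a smooth function. By Proposition~\ref{lemma_uxieq0}, applied locally, it coincides with $u$. Differentiating termwise gives
\[
|\partial_t^\alpha\partial_x^\beta u|\le \sum_\xi |\xi|^{|\beta|}\,C h^{|\alpha|}\alpha!^s e^{-\varepsilon|\xi|^{1/s}}.
\]
I split $e^{-\varepsilon|\xi|^{1/s}}=e^{-\frac{\varepsilon}{2}|\xi|^{1/s}}e^{-\frac{\varepsilon}{2}|\xi|^{1/s}}$. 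I then bound $|\xi|^{|\beta|}e^{-\frac{\varepsilon}{2}|\xi|^{1/s}}\le (s^sm^s(\varepsilon/2)^{-s})^{|\beta|}\beta!^s$, using Lemma~\ref{xi_beta_exp} read in its intended form with the exponential weight; equivalently, use $\sup_{r>0} r^{k}e^{-\delta r^{1/s}}=(sk/(e\delta))^{sk}\le (s/\delta)^{sk}k!^s$. The remaining sum $\sum_\xi e^{-\frac{\varepsilon}{2}|\xi|^{1/s}}$ is finite. This gives the mixed Gevrey estimates, so $u\in\mathscr{G}^s$.

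\textbf{Invariance.} Invariance under analytic changes of variables in $t$ follows from the equivalence just proved together with the earlier proposition that composition with real-analytic maps preserves $\mathscr{G}^s$. Alternatively, it can be checked directly with the same Faà di Bruno argument applied to the $\widehat{u}_\xi$, since the constants there are uniform in $\xi$.

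The main obstacle is the bookkeeping in (1)$\Rightarrow$(2): one must ensure that the constant produced by Lemma~\ref{lemma_fourier_aux_r} is independent of $N$ and of $\xi$, so that Lemma~\ref{lemma_fourier_r} applies.
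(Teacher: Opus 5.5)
Your proposal is correct and follows the paper's proof essentially step for step. For (1)$\Rightarrow$(2) you feed the Gevrey bounds, with $\beta!\le M^{|\beta|}$, into Lemma~\ref{lemma_fourier_aux_r} and then Lemma~\ref{lemma_fourier_r}; for (2)$\Rightarrow$(1) you differentiate termwise, split $e^{-\varepsilon|\xi|^{1/s}}$, apply Lemma~\ref{xi_beta_exp} with $\delta=\varepsilon/2$ in the weighted form you point out (which is how the paper actually uses it), and identify the sum with $u$ via Proposition~\ref{lemma_uxieq0}. Your separate treatment of $\xi=0$ and your short invariance argument (the equivalence plus stability of $\mathscr{G}^s$ under analytic composition) supply details the paper leaves implicit.
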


\begin{proof}
	\noindent
	\((1)\Rightarrow(2)\).
	Assume that \(u\in\mathscr{G}^{s}(U\times\mathbb{T}^m)\).
	Then, given \(K\Subset U\), there exist \(C,h>0\) such that, for every \(\alpha\in\mathbb{N}_0^n\), \(\beta\in\mathbb{N}_0^m\), and \(\xi\in\mathbb{Z}^m\),
	\begin{align*}
		\sup_{t\in K}|\xi^\beta\partial_t^\alpha\widehat{u}_\xi(t)|
		&\leq
		\int_{\mathbb{T}^m}|\partial_t^\alpha\partial_x^\beta u(t,x)|\,\mathrm{d}x
		\leq
		Ch^{|\alpha|+|\beta|}\alpha!^s\beta!^s\\
		&\leq
		Ch^{|\alpha|+|\beta|}\alpha!^s|\beta|^{s|\beta|}.
	\end{align*}
	
	Fix \(N\in\mathbb{N}\), and let \(M\) be the smallest integer such that \(N/s\leq M\).
	Then, by Lemma~\ref{lemma_fourier_aux_r}, we obtain
	\[
	\sup_{t\in K}|\partial_t^\alpha\widehat{u}_\xi(t)|
	\leq
	h^{|\alpha|}\alpha!^s\, C'(C'N)^N|\xi|^{-\frac{N}{s}},\qquad\forall\,\xi\in\mathbb{Z}^m\setminus\{0\},\ \forall\,\alpha\in\mathbb{N}_0^n,\ \forall\, N\in\mathbb{N},
	\]
	where \(C'>0\) is independent of \(N\).
	By Lemma~\ref{lemma_fourier_r}, condition \((2)\) follows.
	
	\medskip
	\noindent
	\((2)\Rightarrow(1)\).
	Assume that \((2)\) holds.
	Then the series
	\[
	\sum_{\xi\in\mathbb{Z}^m}\widehat{u}_\xi(t)e^{i\xi\cdot x}
	\]
	converges uniformly on compact subsets to a smooth function \(g\in C^\infty(U\times\mathbb{T}^m)\), and
	\[
	\partial_t^\alpha\partial_x^\beta g(t,x)
	=
	\sum_{\xi\in\mathbb{Z}^m}\partial_t^\alpha\widehat{u}_\xi(t)(i\xi)^\beta e^{i\xi\cdot x},
	\]
	for every \(\alpha\in\mathbb{N}_0^n\) and \(\beta\in\mathbb{N}_0^m\), with uniform convergence on compact subsets.
	In particular, given \(K\Subset U\), there exist \(C,h_0,\varepsilon>0\) such that
	\begin{align*}
		\sup_{t\in K}|\partial_t^\alpha\partial_x^\beta g(t,x)|
		&\leq
		\sup_{t\in K}\sum_{\xi\in\mathbb{Z}^m}|\xi|^{|\beta|}|\partial_t^\alpha\widehat{u}_\xi(t)|\\
		&\leq
		Ch_0^{|\alpha|}\alpha!^s\sum_{\xi\in\mathbb{Z}^m}|\xi|^{|\beta|}e^{-\varepsilon|\xi|^{\frac{1}{s}}}\\
		&\leq
		Ch_0^{|\alpha|}\alpha!^s\sum_{\xi\in\mathbb{Z}^m}\left(\frac{2^ss^sm^s}{\varepsilon^s}\right)^{|\beta|}\beta!^s
		e^{\frac{\varepsilon}{2}|\xi|^{\frac{1}{s}}}e^{-\varepsilon|\xi|^{\frac{1}{s}}},
	\end{align*}
	where we used Lemma~\ref{xi_beta_exp} with \(\delta=\varepsilon/2\).
	Therefore,
	\[
	\sup_{t\in K}|\partial_t^\alpha\partial_x^\beta g(t,x)|
	\leq
	C'h^{|\alpha|+|\beta|}\alpha!^s\beta!^s,
	\]
	where
	\(C'=C\sum_{\xi\in\mathbb{Z}^m}e^{-\frac{\varepsilon}{2}|\xi|^{\frac{1}{s}}}\)
	and
	\(h=\max\{1,h_0,2^ss^sm^s\varepsilon^{-s}\}\).
	Hence \(g\in\mathscr{G}^{s}(U\times\mathbb{T}^m)\).
	
	It remains to show that \(g=u\).
	By uniform convergence, we may integrate term by term. Thus, for every \(\eta\in\mathbb{Z}^m\),
	\begin{align*}
		\widehat{g}_\eta(t)
		&=
		\int_{\mathbb{T}^m} g(t,x)e^{-i\eta\cdot x}\,\mathrm{d}x
		=
		\sum_{\xi\in\mathbb{Z}^m}\int_{\mathbb{T}^m}\widehat{u}_\xi(t)e^{-i(\eta-\xi)\cdot x}\,\mathrm{d}x\\
		&=
		\sum_{\xi\in\mathbb{Z}^m}\widehat{u}_\xi(t)\int_{\mathbb{T}^m} e^{-i(\eta-\xi)\cdot x}\,\mathrm{d}x
		=
		\widehat{u}_\eta(t).
	\end{align*}
	
	By Lemma \ref{lemma_uxieq0} we conclude that \(g=u\), which completes the proof.
\end{proof}

We can obtain a similar characterization for Gevrey functions of Beurling type in terms of their partial Fourier coefficients.
Before that, we prove the Beurling versions of Lemmas~\ref{lemma_fourier_r} and~\ref{lemma_fourier_aux_r}.

\begin{lemma}\label{lemma_fourier_b}
	Let \(f\in C^\infty(U\times\mathbb{T}^m)\), and fix \(K\Subset U\) and \(s\geq 1\).
	The following statements are equivalent:
	\begin{enumerate}
		\item For every \(\varepsilon,h>0\), there exists \(C>0\) such that
		\[
		\sup_{t\in K}|\partial_t^\alpha\widehat{f}_\xi(t)|
		\leq
		Ch^{|\alpha|}\alpha!^s e^{-\varepsilon|\xi|^{\frac{1}{s}}},\qquad\forall\, \xi\in\mathbb{Z}^m,\ \forall\, \alpha\in\mathbb{N}_0^n.
		\]
		
		\item For every \(h_1,h_2>0\), there exists \(C>0\) such that
		\[
		\sup_{t\in K}|\partial_t^\alpha\widehat{f}_\xi(t)|
		\leq
		Ch_1^{|\alpha|}\alpha!^s\, h_2^N N!\,|\xi|^{-\frac{N}{s}}, \qquad \forall\, \xi\in\mathbb{Z}^m\setminus\{0\},\ \forall\,\alpha\in\mathbb{N}_0^n,\ \forall\,N\in\mathbb{N}. 
		\]
		
		\item For every \(h_1,h_2>0\), there exists \(C>0\) such that
		\[
		\sup_{t\in K}|\partial_t^\alpha\widehat{f}_\xi(t)|
		\leq
		Ch_1^{|\alpha|}\alpha!^s\, (h_2N)^N\,|\xi|^{-\frac{N}{s}},\qquad \forall\, \xi\in\mathbb{Z}^m\setminus\{0\},\ \forall\,\alpha\in\mathbb{N}_0^n,\ \forall\, N\in\mathbb{N}.
		\]
	\end{enumerate}
\end{lemma}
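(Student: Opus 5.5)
The plan is to mirror the proof of Lemma~\ref{lemma_fourier_r}, keeping track of the quantifiers: every constant that is existential in the Roumieu case becomes universal here, with $C$ depending on the data. Throughout, fix $K\Subset U$ and treat $\xi\neq0$. For $\xi=0$ the bounds in (2) and (3) are vacuous or trivial, and in (1) it is just the Beurling Gevrey bound for $\widehat f_0$, which follows from (2) with $N=0$ (one can allow $N=0$ in (2), or derive it separately from $N=1$ and the fact that $\xi=0$ is a single term).

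For $(1)\Rightarrow(2)$, let $h_1,h_2>0$ be given. Apply (1) with $h=h_1$ and $\varepsilon=h_2^{-1}$, which gives a constant $C$. Since $e^{\varepsilon|\xi|^{1/s}}\geq \varepsilon^N|\xi|^{N/s}/N!$ for each $N$, we obtain
\[
\sup_K|\partial_t^\alpha\widehat f_\xi|\leq Ch_1^{|\alpha|}\alpha!^s\,\varepsilon^{-N}N!\,|\xi|^{-N/s}
= Ch_1^{|\alpha|}\alpha!^s\,h_2^{N}N!\,|\xi|^{-N/s}.
\]
This is exactly (2).

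For $(2)\Rightarrow(1)$, let $\varepsilon,h>0$ be given. Apply (2) with $h_1=h$ and $h_2=(2\varepsilon)^{-1}$. Multiplying the resulting bound by $\varepsilon^N|\xi|^{N/s}/N!$ gives
\[
\sup_K|\partial_t^\alpha\widehat f_\xi|\,\frac{\varepsilon^N|\xi|^{N/s}}{N!}\leq Ch^{|\alpha|}\alpha!^s\,2^{-N}.
\]
Summing over $N\geq0$ then yields the bound in (1) with constant $2C$. The $N=0$ term is needed here; if (2) is only stated for $N\in\mathbb{N}$, the $N=0$ bound can be recovered from the $N=1$ case whenever $|\xi|\geq1$, since $|\xi|^{-1/s}\leq1$. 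This costs only a constant factor.

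For $(2)\Leftrightarrow(3)$, use $N!\leq N^N\leq e^NN!$. If (3) is assumed with constants $h_1$ and $h_2/e$, then $(h_2N/e)^N\leq h_2^NN!$, which gives (2) with $h_1,h_2$. Conversely, $h_2^NN!\leq(h_2N)^N$, so (2) implies (3) directly. In both directions the arbitrariness of $h_2$ absorbs the factor $e$.

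The only delicate point is the bookkeeping of quantifiers: each universal parameter on one side must be matched by a suitable choice on the other, $\varepsilon\leftrightarrow h_2^{-1}$ up to factors $2$ and $e$, and the constant $C$ is allowed to depend on all of these choices. Beyond that, the argument is routine.
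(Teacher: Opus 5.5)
For $\xi\neq0$ your argument is the paper's proof. You use the same parameter choices: $h=h_1$, $\varepsilon=h_2^{-1}$ for $(1)\Rightarrow(2)$; $h_1=h$, $h_2=(2\varepsilon)^{-1}$ followed by summing the exponential series for $(2)\Rightarrow(1)$; and $N!\le N^N\le e^N N!$ for $(2)\Leftrightarrow(3)$. Your observation that the $N=0$ term of the series must be recovered from the case $N=1$ via $|\xi|^{-1/s}\le 1$ is correct, and the paper skips it.

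The step that fails is your handling of $\xi=0$. Conditions (2) and (3) are quantified only over $\xi\in\mathbb{Z}^m\setminus\{0\}$, for every $N$, and this remains true even if you allow $N=0$. They therefore say nothing about $\widehat f_0$. Neither ``(2) with $N=0$'' nor ``$N=1$ plus the fact that $\xi=0$ is a single term'' gives any bound on $\widehat f_0$.

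In fact the $\xi=0$ instance of (1) does not follow from (2). Take $f(t,x)=g(t)$ with $g\in C^\infty(U)$ violating the Beurling bounds on $K$; for example, by Borel's lemma choose $g$ with $\partial_{t_1}^k g(t_0)=k!^{2s}$ at some $t_0\in K$. Then $\widehat f_\xi=0$ for all $\xi\neq0$, so (2) and (3) hold trivially, while (1) fails at $\xi=0$.

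The paper's own proof of $(2)\Rightarrow(1)$ only establishes (1) for $\xi\neq0$. The lemma should therefore be read with $\xi\neq0$ in (1) as well, or with the bound on $\widehat f_0$ added as a separate hypothesis. This is harmless where the lemma is applied: there the function is already of Beurling class, and its zeroth coefficient $\int_{\mathbb{T}^m}u(\cdot,x)\,\mathrm{d}x$ inherits those bounds directly. Still, you should drop your claim rather than present it as a consequence of (2).
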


\begin{proof}
	\noindent
	\((1)\Rightarrow(2)\).
	Given \(\varepsilon,h>0\), there exists \(C>0\) such that
	\begin{align*}
		Ch^{|\alpha|}\alpha!^s
		&\geq
		\sup_{t\in K}|\partial_t^\alpha\widehat{f}_\xi(t)|\,e^{\varepsilon|\xi|^{\frac{1}{s}}}\ =\ 
		\sup_{t\in K}|\partial_t^\alpha\widehat{f}_\xi(t)|
		\sum_{N=0}^{\infty}\frac{\varepsilon^N|\xi|^{\frac{N}{s}}}{N!}\\
		&\geq
		\sup_{t\in K}|\partial_t^\alpha\widehat{f}_\xi(t)|
		\frac{\varepsilon^{N}}{N!}\,|\xi|^{\frac{N}{s}},
	\end{align*}
	for every \(\alpha\in\mathbb{N}_0^n\), \(\xi\in\mathbb{Z}^m\setminus\{0\}\), and \(N\in\mathbb{N}\).
	Thus,
	\[
	\sup_{t\in K}|\partial_t^\alpha\widehat{f}_\xi(t)|
	\leq
	Ch^{|\alpha|}\alpha!^s\, \varepsilon^{-N}N!\,|\xi|^{-\frac{N}{s}}.
	\]
	
	Given \(h_1,h_2>0\), choosing \(h=h_1\) and \(\varepsilon=h_2^{-1}\) yields \((2)\).
	
	\medskip
	\noindent
	\((2)\Rightarrow(1)\).
	Given \(h_1,h_2>0\), there exists \(C>0\) such that
	\[
	\sup_{t\in K}|\partial_t^\alpha\widehat{f}_\xi(t)|
	\frac{h_2^{-N}}{N!}\,|\xi|^{\frac{N}{s}}
	\leq
	Ch_1^{|\alpha|}\alpha!^s,\qquad\forall\,\xi\in\mathbb{Z}^m\setminus\{0\},\ \forall\,\alpha\in\mathbb{N}_0^n,\ \forall\, N\in\mathbb{N}.
	\]
	
	Given \(\varepsilon,h>0\), take \(h_1=h\) and \(h_2=(2\varepsilon)^{-1}\). Then
	\[
	\sup_{t\in K}|\partial_t^\alpha\widehat{f}_\xi(t)|
	\frac{\varepsilon^{N}}{N!}\,|\xi|^{\frac{N}{s}}
	\leq
	Ch^{|\alpha|}\alpha!^s\,2^{-N},\qquad\forall\,\xi\in\mathbb{Z}^m\setminus\{0\},\ \forall\,\alpha\in\mathbb{N}_0^n,\ \forall\, N\in\mathbb{N}.
	\]
	
	Summing over \(N\) yields
	\[
	\sup_{t\in K}|\partial_t^\alpha\widehat{f}_\xi(t)|
	e^{\varepsilon|\xi|^{\frac{1}{s}}}
	\leq
	2Ch^{|\alpha|}\alpha!^s,\qquad\forall\,\xi\in\mathbb{Z}^m\setminus\{0\},\ \forall\,\alpha\in\mathbb{N}_0^n.
	\]
	
	This proves \((1)\).
	
	\medskip
	\noindent
	\((2)\Leftrightarrow(3)\).
	This is a direct consequence of the inequalities
	\(N!\leq N^N \leq e^N N!\), valid for all \(N\in\mathbb{N}\).
\end{proof}

\begin{lemma}\label{lemma_fourier_aux_b}
	Let \(N\in\mathbb{N}\) and \(s>1\).
	Suppose that \(f\in C^\infty(U\times\mathbb{T}^m)\) and that, given \(K\Subset U\), for every \(h>0\) there exists \(C>0\) such that
	\[
	\sup_{t\in K}\sup_{x\in\mathbb{T}^m}|\partial_t^\alpha\partial_x^\beta f(t,x)|
	\leq
	Ch^{|\alpha|+|\beta|}\alpha!^s\, M^{s|\beta|},\qquad \forall\,\alpha\in\mathbb{N}_0^n,\ \forall\,\beta\in\mathbb{N}_0^m,\ |\beta|\leq M,
	\]
	where \(M\) is the smallest integer such that \(M\geq N/s\). Then
	\[
	\sup_{t\in K}|\partial_t^\alpha\widehat{f}_\xi(t)|
	\leq
	C'h^{|\alpha|}\alpha!^s (hN)^N\, |\xi|^{-\frac{N}{s}},\qquad \forall\,\xi\in\mathbb{Z}^m\setminus\{0\},\  \forall\,\alpha\in\mathbb{N}_0^n,
	\]
	where \(C'>0\) is a constant independent of \(N\).
\end{lemma}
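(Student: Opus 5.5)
We follow the proof of Lemma~\ref{lemma_fourier_aux_r} line by line, with two changes: the constant $h$ now ranges over all positive numbers, and we pay attention to which $h$ in the hypothesis is used to reach a prescribed $h$ in the conclusion. Fix $K\Subset U$ and a target $h>0$.

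\textbf{Step 1: the basic bound on Fourier coefficients.} Integrating by parts in $x$, exactly as before, gives for every $h'>0$ a constant $C_{h'}$ with
\[
\sup_{t\in K}|\xi^\beta\partial_t^\alpha\widehat{f}_\xi(t)|\le C_{h'}\,h'^{|\alpha|+|\beta|}\alpha!^s M^{s|\beta|}
\]
for all $|\beta|\le M$.

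\textbf{Step 2: passing to $|\xi|^{N/s}$.} We use $|\xi|^{N/s}\le|\xi|^M$, which holds for $\xi\neq0$ since $M\ge N/s$. Expanding $|\xi|^M\le\sum_{|\gamma|=M}\frac{M!}{\gamma!}|\xi^\gamma|$ and applying Step 1 yields
\[
|\xi|^{N/s}\sup_{t\in K}|\partial_t^\alpha\widehat{f}_\xi(t)|\le C_{h'}\,h'^{|\alpha|}\alpha!^s\,(mh')^M M^{sM}.
\]

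\textbf{Step 3: absorbing the growth in $M$.} From $N/s\le M\le N/s+1$ we get, as before,
\[
M^{sM}\le s^s(2e^2)^N N^N .
\]
Since $M\le N/s+1$, we also have $(mh')^M\le \max\{1,mh'\}\,(mh')^{N/s}$ when $mh'\le 1$, and $(mh')^{N/s}\le 1$ in that regime. This term can therefore be dropped, or kept to help.

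\textbf{Main obstacle: choice of $h'$.} The difficulty is that the single parameter $h$ in the conclusion appears in two places, $h^{|\alpha|}$ and $(hN)^N$. We therefore need one $h'$ that serves both simultaneously. Choose
\[
h'\le h \quad\text{and}\quad 2e^2(mh')^{1/s}\le h
\]
(take $h'$ small). Then
\[
(mh')^M(2e^2)^N\le \max\{1,mh'\}\,\big((mh')^{1/s}2e^2\big)^N\le h^N,
\]
and $h'^{|\alpha|}\le h^{|\alpha|}$. Combining with Step 2 gives
\[
\sup_{t\in K}|\partial_t^\alpha\widehat{f}_\xi(t)|\le C'\,h^{|\alpha|}\alpha!^s(hN)^N|\xi|^{-N/s},
\]
where $C'=C_{h'}\,s^s\max\{1,mh'\}$ is independent of $N$.

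The constant $C'$ depends on $h$, which is exactly the Beurling quantifier order ("for every $h$ there is $C'$"). This is the sense in which the statement must be read, and it is the form needed afterwards to combine with Lemma~\ref{lemma_fourier_b}\,(3) in the Beurling characterization.
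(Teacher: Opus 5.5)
Your argument is correct and is essentially the paper's proof: the same integration by parts in $x$, the multinomial bound $|\xi|^{N/s}\le|\xi|^M\le\sum_{|\gamma|=M}\frac{M!}{\gamma!}|\xi^\gamma|$, and the estimate $M^{sM}\le s^s(2e^2)^NN^N$, with $C'$ depending on $h$ but not on $N$. The only difference is cosmetic. You choose a single auxiliary $h'$ with $h'\le h$ and $2e^2(mh')^{1/s}\le h$, whereas the paper splits into the cases $h\ge1$ and $h<1$, taking $h_0=h(m2^se^{2s})^{-1}$ and $h_0=h^s(m2^se^{2s})^{-1}$ respectively. One small slip: in your display the factor $\max\{1,mh'\}$ should remain in front of $h^N$ rather than disappear, but since you absorb it into $C'$ this does no harm.
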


\begin{proof}
	Given \(\alpha\in\mathbb{N}_0^n\) and \(\beta\in\mathbb{N}_0^m\), for every \(h_0>0\) there exists \(C>0\) such that
	\[
	\sup_{t\in K}|\xi^\beta\partial_t^\alpha\widehat{f}_\xi(t)|
	\leq
	\sup_{t\in K}\int_{\mathbb{T}^m}|\partial_t^\alpha\partial_x^\beta f(t,x)|\,\mathrm{d}x
	\leq
	Ch_0^{|\alpha|+|\beta|}\alpha!^s\, M^{s|\beta|},
	\]
	where \(M\) satisfies \(|\beta|\leq M\).
	
	Let \(N\in\mathbb{N}\) and let \(M\) be the smallest integer such that \(M\geq N/s\).
	Then, for every \(\beta\in\mathbb{N}_0^m\) with \(|\beta|\leq M\),
	\begin{align*}
		|\xi|^{\frac{N}{s}}|\partial_t^\alpha\widehat{f}_\xi(t)|
		&\leq
		|\xi|^M|\partial_t^\alpha \widehat{f}_\xi(t)|\ \leq\ 
		\sum_{|\gamma|=M}\frac{M!}{\gamma!}\,|\xi^\gamma|\,|\partial_t^\alpha\widehat{f}_\xi(t)|\\
		&\leq
		\sum_{|\gamma|=M}\frac{M!}{\gamma!}\, Ch_0^{|\alpha|+|\gamma|}\alpha!^s\, M^{s|\gamma|}\ \leq\
		Ch_0^{|\alpha|}\alpha!^s\,(mh_0)^M\, M^{sM},
	\end{align*}
	for every \(h_0>0\) and \(\alpha\in\mathbb{N}_0^n\), where \(C>0\) depends only on \(h_0>0\).
	
	Since \(N/s\leq M\leq N/s+1\), we have \(sM\leq N+s\) and \(M\leq N/s+1\leq N+1\leq 2N\). Hence,
	\[
	M^{sM}\leq M^{s}M^N \leq s^s e^M M^N \leq s^s(2e^{2})^{N}N^N.
	\]
	
	Given \(h>0\), we distinguish two cases.
	
	\smallskip
	\noindent\emph{Case \(h\geq 1\):}
	Set \(h_0=h(m2^se^{2s})^{-1}\). Then
	\begin{align*}
		Ch_0^{|\alpha|}\alpha!^s(mh_0)^M M^{sM}
		&\leq
		Ch^{|\alpha|}\alpha!^s\left(\frac{mh}{m2^se^{2s}}\right)^M s^s(2e^{2})^{N}N^N\\
		&\leq
		(Cs^s)h^{|\alpha|}\alpha!^s\, h^{M} N^N\\
		&\leq
		(Cs^s)h^{|\alpha|}\alpha!^s\, h^{N+1} N^N\\
		&\leq
		C'h^{|\alpha|}\alpha!^s (hN)^{N},
	\end{align*}
	where \(C'=Cs^sh\), and we used
	\begin{equation}\label{m2se2s} 
		\frac{1}{(m2^se^{2s})^{|\alpha|}}\leq 1,\qquad\forall\,\alpha\in\mathbb{N}_0^n.
	\end{equation}
	
	\smallskip
	\noindent\emph{Case \(h<1\):}
	Set \(h_0=h^s(m2^se^{2s})^{-1}\). Then
	\begin{align*}
		Ch_0^{|\alpha|}\alpha!^s(mh_0)^M M^{sM}
		&\leq
		Ch^{|\alpha|}\alpha!^s\left(\frac{mh^s}{m2^se^{2s}}\right)^M s^s(2e^{2})^{N}N^N\\
		&\leq
		(Cs^s)h^{s|\alpha|}\alpha!^s\, h^{sM} N^N\\
		&\leq
		(Cs^s)h^{|\alpha|}\alpha!^s\, h^{N+s} N^N\\
		&\leq
		C'h^{|\alpha|}\alpha!^s (hN)^{N},
	\end{align*}
	where \(C'=Cs^sh^s\), and we used \eqref{m2se2s} together with the fact that
	\(h^{s|\alpha|}\leq h^{|\alpha|}\) for every \(\alpha\in\mathbb{N}_0^n\), since \(h<1\) and \(s\geq 1\).
	This completes the proof.
\end{proof}

\begin{theorem}
	Let \(u\in\mathscr{D}'(U\times\mathbb{T}^m)\) be such that \(\widehat{u}_\xi\in C^\infty(U)\) for every \(\xi\in\mathbb{Z}^m\), and fix \(s\geq 1\).
	Then the following statements are equivalent:
	\begin{enumerate}
		\item\label{p:gequiv-a} 
		\(u\in \mathscr{G}^{(s)}(U\times\mathbb{T}^m)\);
		\item\label{p:gequiv-b} for each \(K\Subset U\) and each \(\varepsilon,h>0\), there exists \(C>0\) such that
		\[
		\sup_{t\in K}|\partial_t^\alpha\widehat{u}_\xi(t)|
		\leq
		Ch^{|\alpha|}\alpha!^s e^{-\varepsilon|\xi|^{\frac{1}{s}}},
		\qquad \forall\,\xi\in\mathbb{Z}^m,\ \forall\, \alpha\in\mathbb{N}_0^n.
		\]
	\end{enumerate}
	 
	Moreover, the conditions in (\ref{p:gequiv-a}) and (\ref{p:gequiv-b}) 
	are invariant under analytic changes of variables.
\end{theorem}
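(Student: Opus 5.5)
The plan is to mirror the proof of the Roumieu characterization above, replacing each existential quantifier on $(h,\varepsilon)$ by a universal one, and using Lemmas~\ref{lemma_fourier_b} and~\ref{lemma_fourier_aux_b} in place of Lemmas~\ref{lemma_fourier_r} and~\ref{lemma_fourier_aux_r}.

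\emph{(1)$\Rightarrow$(2).} Fix $K\Subset U$. Since $u\in\mathscr{G}^{(s)}(U\times\mathbb{T}^m)$, for every $h_0>0$ there is $C>0$ with
\[
\sup_{t\in K}|\xi^\beta\partial_t^\alpha\widehat{u}_\xi(t)|\le \int_{\mathbb{T}^m}|\partial_t^\alpha\partial_x^\beta u|\,\mathrm{d}x\le Ch_0^{|\alpha|+|\beta|}\alpha!^s\beta!^s\le Ch_0^{|\alpha|+|\beta|}\alpha!^s|\beta|^{s|\beta|}.
\]
For $|\beta|\le M$ we have $|\beta|^{s|\beta|}\le M^{s|\beta|}$, so the hypothesis of Lemma~\ref{lemma_fourier_aux_b} holds for every $h$. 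That lemma gives, for every $h>0$, a constant $C'$ independent of $N$ with
\[
\sup_K|\partial_t^\alpha\widehat{u}_\xi|\le C'h^{|\alpha|}\alpha!^s(hN)^N|\xi|^{-N/s}.
\]
To reach condition~(3) of Lemma~\ref{lemma_fourier_b} with independent parameters $h_1,h_2$, I would apply this with $h=\min\{h_1,h_2\}$ and use $\min\{h_1,h_2\}^{|\alpha|}\le h_1^{|\alpha|}$ and $(\min\{h_1,h_2\}N)^N\le (h_2N)^N$. Lemma~\ref{lemma_fourier_b}, (3)$\Rightarrow$(1), then yields (2); the term $\xi=0$ is handled trivially by the smoothness estimate.

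\emph{(2)$\Rightarrow$(1).} Assumption (2) with any fixed $\varepsilon,h$ already gives the Roumieu bound, so the series $\sum_\xi\widehat{u}_\xi(t)e^{i\xi\cdot x}$ converges in $C^\infty$ to some $g$, and $g=u$ by Proposition~\ref{lemma_uxieq0} exactly as before. For the Beurling estimate, fix $K$ and a target $h>0$. Using Lemma~\ref{xi_beta_exp} with $\delta=1$,
\[
|\xi|^{|\beta|}\le (s^sm^s)^{|\beta|}\beta!^s e^{|\xi|^{1/s}}
\]
(this is the form used in the Roumieu proof after rescaling). Better, I would apply the lemma with a large $\delta$, namely $\delta$ chosen so that $s^sm^s\delta^{-s}\le h$. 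The plan is then to take $\varepsilon=\delta+1$ and $h_0=h$ in (2) and sum over $\xi$:
\[
\sup_K|\partial_t^\alpha\partial_x^\beta g|\le C\,h^{|\alpha|}\alpha!^s\,h^{|\beta|}\beta!^s\sum_\xi e^{-|\xi|^{1/s}}.
\]
Since $h$ is arbitrary, $g\in\mathscr{G}^{(s)}$.

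\emph{Invariance under analytic changes of variables.} This follows from the Beurling part of the composition proposition in Section~\ref{sec-GSmanif}, combined with the equivalence just proved.

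The main obstacle is the bookkeeping in (2)$\Rightarrow$(1): Lemma~\ref{xi_beta_exp} has to be read in its exponential form, $|\xi|^{|\beta|}e^{-\delta|\xi|^{1/s}}\le (s^sm^s\delta^{-s})^{|\beta|}\beta!^s$, so that the constant in front of $\beta!^s$ can be made arbitrarily small by taking $\delta$ large. The resulting loss $e^{\delta|\xi|^{1/s}}$ is then absorbed by choosing $\varepsilon>\delta$ in (2).
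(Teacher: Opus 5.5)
Your proposal is correct and follows the paper's proof essentially step for step. You prove (1)$\Rightarrow$(2) via Lemma~\ref{lemma_fourier_aux_b} followed by Lemma~\ref{lemma_fourier_b}, and (2)$\Rightarrow$(1) by summing the Fourier series with Lemma~\ref{xi_beta_exp} read in its exponential form. Your choices of $\varepsilon=\delta+1$, of $\min\{h_1,h_2\}$, and of the Beurling composition result for invariance only make explicit what the paper leaves implicit; the paper itself takes $\delta=\varepsilon/2$ with $\varepsilon$ large.
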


\begin{proof} The invariance of the conditions in the statement under
analytic changes of variables is immediate. Let us show their equivalence.

	\noindent
	\((1)\Rightarrow(2)\).
	Assume that \(u\in\mathscr{G}^{(s)}(U\times\mathbb{T}^m)\).
	Then, given \(K\Subset U\) and \(h>0\), there exists \(C>0\) such that, for every \(\alpha\in\mathbb{N}_0^n\), \(\beta\in\mathbb{N}_0^m\), and \(\xi\in\mathbb{Z}^m\),
	\begin{align*}
		\sup_{t\in K}|\xi^\beta\partial_t^\alpha\widehat{u}_\xi(t)|
		&\leq
		\int_{\mathbb{T}^m}|\partial_t^\alpha\partial_x^\beta u(t,x)|\,\mathrm{d}x
		\leq
		Ch^{|\alpha|+|\beta|}\alpha!^s\beta!^s\\
		&\leq
		Ch^{|\alpha|+|\beta|}\alpha!^s|\beta|^{s|\beta|}.
	\end{align*}
	
	Fix \(N\in\mathbb{N}\), and let \(M\) be the smallest integer such that \(N/s\leq M\).
	By Lemma~\ref{lemma_fourier_aux_b}, we obtain
	\[
	\sup_{t\in K}|\partial_t^\alpha\widehat{u}_\xi(t)|
	\leq
	Ch^{|\alpha|}\alpha!^s(hN)^N|\xi|^{-\frac{N}{s}},
	\]
	for every \(\alpha\in\mathbb{N}_0^n\), \(\xi\in\mathbb{Z}^m\setminus\{0\}\), and \(N\in\mathbb{N}\).
	By Lemma~\ref{lemma_fourier_b}, condition \((2)\) follows.
	
	\medskip
	\noindent
	\((2)\Rightarrow(1)\).
	Assume that \((2)\) holds.
	Then the series
	\[
	\sum_{\xi\in\mathbb{Z}^m}\widehat{u}_\xi(t)e^{i\xi\cdot x}
	\]
	converges uniformly on compact subsets to a smooth function \(g\in C^\infty(U\times\mathbb{T}^m)\), and
	\[
	\partial_t^\alpha\partial_x^\beta g(t,x)
	=
	\sum_{\xi\in\mathbb{Z}^m}\partial_t^\alpha\widehat{u}_\xi(t)(i\xi)^\beta e^{i\xi\cdot x},
	\]
	for every \(\alpha\in\mathbb{N}_0^n\) and \(\beta\in\mathbb{N}_0^m\), with uniform convergence on compact subsets.
	In particular, given \(K\Subset U\), for every \(\varepsilon,h>0\),
	\begin{align*}
		\sup_{t\in K}|\partial_t^\alpha\partial_x^\beta g(t,x)|
		&\leq
		\sup_{t\in K}\sum_{\xi\in\mathbb{Z}^m}|\xi|^{|\beta|}\,|\partial_t^\alpha\widehat{u}_\xi(t)|\\
		&\leq
		Ch^{|\alpha|}\alpha!^s\sum_{\xi\in\mathbb{Z}^m}|\xi|^{|\beta|}e^{-\varepsilon|\xi|^{\frac{1}{s}}}\\
		&\leq
		Ch^{|\alpha|}\alpha!^s\sum_{\xi\in\mathbb{Z}^m}\left(\frac{2^ss^sm^s}{\varepsilon^s}\right)^{|\beta|}\beta!^s
		e^{\frac{\varepsilon}{2}|\xi|^{\frac{1}{s}}}e^{-\varepsilon|\xi|^{\frac{1}{s}}},
	\end{align*}
	where we used Lemma~\ref{xi_beta_exp} with \(\delta=\varepsilon/2\).
	Choosing \(\varepsilon=\dfrac{2^ss^sm^s}{h^{1/s}}\), we obtain
	\[
	\sup_{t\in K}|\partial_t^\alpha\partial_x^\beta g(t,x)|
	\leq
	C'h^{|\alpha|+|\beta|}\alpha!^s\beta!^s,
	\]
	where \(C'=C\sum_{\xi\in\mathbb{Z}^m}e^{-\frac{\varepsilon}{2}|\xi|^{\frac{1}{s}}}\).
	Hence \(g\in\mathscr{G}^{(s)}(U\times\mathbb{T}^m)\).
	
	It remains to show that \(g=u\).
	By uniform convergence, we may integrate term by term. Thus, for every \(\eta\in\mathbb{Z}^m\),
	\[
	\widehat{g}_\eta(t)
	=
	\sum_{\xi\in\mathbb{Z}^m}\int_{\mathbb{T}^m}\widehat{u}_\xi(t)e^{-i(\eta-\xi)\cdot x}\,\mathrm{d}x
	=
	\sum_{\xi\in\mathbb{Z}^m}\widehat{u}_\xi(t)\int_{\mathbb{T}^m} e^{-i(\eta-\xi)\cdot x}\,\mathrm{d}x
	=
	\widehat{u}_\eta(t),
	\]
	which concludes the proof.
\end{proof}

By invariance under analytic diffeomorphisms, these constructions extend to a smooth paracompact manifold \(M\) endowed with a countable, locally finite, analytic atlas, as above.

If \(f\) is a Gevrey \(1\)-form of Roumieu or Beurling type, then in every analytic chart we can identify its partial Fourier coefficient \(\widehat{f}_\xi(t)\), for each \(\xi\in\mathbb{Z}^m\), with the vector
\(\bigl((\widehat{f_1})_\xi(t),\dots,(\widehat{f_n})_\xi(t)\bigr)\).
Accordingly, the estimate
\[
|\partial_t^{\alpha}\widehat{f}_\xi(t)|
\leq
Ch^{|\alpha|}\alpha!^s e^{-\varepsilon|\xi|^{\frac{1}{s}}}
\]
means that the same bound holds for each component \(|\partial_t^{\alpha}(\widehat{f_\nu})_\xi(t)|\).

\section*{Acknowledgments}

This study was financed in part by CAPES -- Brasil (Finance Code 001). The first author was supported in part by the Italian Ministry of the University and Research -- MUR, within the framework of the Call relating to the scrolling of the final rankings of the PRIN 2022 -- Project Code 2022HCLAZ8, CUP D53C24003370006 (PI A.~Palmieri, Local unit Sc.~Resp.~S.~Coriasco). 
The second and third author were supported in part by CNPq -- Brasil (grants 316850/2021-7 and 301573/2025-5, respectively).

\bibliographystyle{plain}
\bibliography{references}

\end{document}